\theoremstyle{plain}
\newtheorem{Thm}{Theorem}[section]
\newtheorem{Prop}[Thm]{Proposition}
\newtheorem{Lem}[Thm]{Lemma}
\newtheorem{Cor}[Thm]{Corollary}
\newtheorem{Ex}[Thm]{Example}
\theoremstyle{definition}
\newtheorem{Def}[Thm]{Definition}
\newtheorem{Rem}[Thm]{Remark}
\numberwithin{equation}{section}
\numberwithin{table}{section}
\numberwithin{figure}{section}
\newcommand{\Z}{{\mathbb Z}}
\newcommand{\GS}{{\mathfrak S}}
\newcommand{\GA}{{\mathfrak A}}
\newcommand{\Image}{\mathrm{Image}}
\newcommand{\Map}{\mathrm{Map}}
\newcommand{\Aut}{\mathrm{Aut}}
\newcommand{\Inn}{\mathrm{Inn}}
\newcommand{\As}{\mathrm{As}}
\newcommand{\Bij}{\mathrm{Bij}}
\newcommand{\id}{\mathrm{id}}
\newcommand{\FF}{\mathcal{F}}
\newcommand{\GG}{\mathcal{G}}
\newcommand{\Grp}{\mathbf{Grp}}
\newcommand{\Q}{\mathbf{Q^{f}}}
\newcommand{\Qsurj}{\mathbf{Q^{f}_{surj}}}
\newcommand{\Qinj}{\mathbf{Q^{f}_{inj}}}
\newcommand{\Dg}{\mathbf{Grp^{gen}}}
\newcommand{\D}{\mathbf{Grp^{g.c.}}}
\newcommand{\Ds}{\mathbf{Grp^{g.c.}_{s}}}
\newcommand{\Db}{\mathbf{Grp^{g.c.}_{b}}}
\newcommand{\Dgcf}{\mathbf{Grp^{g.c.f.}}}
\newcommand{\Dsurj}{\mathbf{Grp^{g.c.f.}_{s}}}
\newcommand{\Dinj}{\mathbf{Grp^{g.c.f.}_{\star}}}
\newcommand{\Obj}{\mathrm{Obj}}
\newcommand{\Hom}{\mathrm{Hom}}
\newcommand{\Isom}{\mathrm{Isom}}
\newcommand{\Conj}{\mathrm{Conj}}
\newcommand{\Alex}{\mathrm{Alex}}
\def\i<#1>{\langle #1 \rangle}
\author{Yasuki Tada}
\address{Mathematics Program, Graduate School of Advanced Science and Engineering, Hiroshima University, 
Higashi-Hiroshima 739-8526, Japan}
\email{tada-yasu@hiroshima-u.ac.jp}
\keywords{Quandle, Category of quandles, Category of groups}
\thanks{2020 \textit{Mathematics Subject Classification}. 
 57K12, 53C35; 20J15. }
\date{}
\title[On categories of faithful quandles]{On categories of faithful quandles with surjective or injective quandle homomorphisms}
\begin{document}

\maketitle

\begin{abstract}
	E. Bunch, P. Lofgren, A. Rapp and D. N. Yetter [J. Knot theory Ramifications (2010)] pointed out that by considering inner automorphism groups of quandles, one has a functor from the category of quandles with surjective homomorphisms to that of groups with surjective homomorphisms. 
	In this paper, we focus on faithful quandles. As main results, we give a category equivalence between the category of faithful quandles with surjective quandle homomorphisms and that of pairs of groups and their conjugation-stable generators with suitable group homomorphisms. 
	We are also interested in injective quandle homomorphisms. 
	By defining suitable morphisms among pairs of groups and their conjugation-stable generators, we obtain a category which is equivalent to the category of faithful quandles with injective quandle homomorphisms.  
\end{abstract}

\maketitle

\section{Introduction}\label{Intro}
	The concept of quandles was introduced by Joyce (\cite{Joyce}). 
	A quandle is a set with a binary operator, whose axioms are corresponding to Reidemeister moves of classical knots. 
	Quandles have been studied actively from various viewpoints 
	(\cite{bardnasyb}, \cite{CategoryofQ1}, \cite{Quotientsofquandles}, \cite{CarterSurvey}, \cite{Qass_pointedabelgrp}, \cite{CategoryofQ2}, \cite{ConntransitiveQ}, \cite{Medial}, \cite{NelsonClassAlex}, \cite{Vendramin}). 
	From the view point of differential geometry,  quandles can be regarded as a generalization of symmetric spaces. 
	There have already been several studies of quandles that transfer notations and ideas in the theory of symmetric spaces to that of quandles (\cite{Ishihara}, \cite{CyclicTamaru}, \cite{Nagashiki}, \cite{Twohom}). 
	
	Let $Q$ be a quandle. 
	We denote by $\Aut(Q)$ the group of quandle automorphisms of $Q$.  
	For a point $x$ of $Q$, a quandle automorphism $s_x : Q \to Q$ is defined as the right multiplication of $x$ with respect to the binary operator, and is called the symmetry at $x$ on $Q$.  
	The inner automorphism group $\Inn(Q)$ is defined as the subgroup of $\Aut(Q)$ generated by  $s(Q)$ the set of all symmetries on $Q$. 
	The inner automorphism groups play important roles in the structure theory of quandles.  
		
	We write $\mathbf{Q}$ for the category of quandles and quandle homomorphisms. 
	 One may expect that the correspondence $\Inn : Q\mapsto\Inn Q$ will become a functor $\mathbf{Q}\to \Grp$, where $\Grp$ denotes the category of groups and group homomorphisms. 
	 As shown in \cite{Quotientsofquandles}, ``$\Inn$'' becomes a functor for surjective quandle homomorphisms, 
	i.e.~``$\Inn$'' is a functor between the category of quandles with surjective quandle homomorphisms and $\Grp$. 
	It should be noted that such the functor is not a category equivalence. 
	
	In this paper, 
	we focus on faithful quandles. 
	Let us denote by $\Q$ the full subcategory of $\mathbf{Q}$ consists of faithful quandles. 
	We are interested in the subcategories $\Qsurj$ of $\Q$ with surjective quandle homomorphisms and $\Qinj$ of $\Q$ with injective quandle homomorphisms. 
	
	In order to study $\Qsurj$ and $\Qinj$,  we define group theoretic categories $\Dsurj$ and $\Dinj$. 
	The objects of $\Dsurj$ and those of $\Dinj$ are pairs of groups and generators with certain conditions. 
	The morphisms of $\Dsurj$ are surjective group homomorphisms inducing surjective maps between fixed generators. 
	The morphisms of $\Dinj$ are defined more complicatedly (see Sections~\ref{section:Dsurj}, \ref{section:def_Dinj01} and \ref{subsection:comp_of_Dinj} for the details). 
	
	The main results of this paper are the following:
	
	\begin{Thm}\label{intromainthmsurj}
		There exists an equivalence $\FF_{\mathrm{surj}} : \Qsurj \to \Dsurj$ such that $\FF_{\mathrm{surj}}(Q, s) = (\Inn Q, s(Q))$ for each faithful quandle $(Q, s)$. 
	\end{Thm}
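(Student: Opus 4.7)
The plan is to construct a quasi-inverse functor $\GG_{\mathrm{surj}} : \Dsurj \to \Qsurj$ and exhibit natural isomorphisms $\GG_{\mathrm{surj}} \circ \FF_{\mathrm{surj}} \cong \id_{\Qsurj}$ and $\FF_{\mathrm{surj}} \circ \GG_{\mathrm{surj}} \cong \id_{\Dsurj}$. The candidate quasi-inverse is the standard conjugation-quandle construction: given $(G, S) \in \Dsurj$, equip $S$ with the operation $x \triangleleft y := y^{-1} x y$. Conjugation-stability of $S$ makes this a quandle, a morphism of $\Dsurj$ restricts to a quandle homomorphism $S \to S'$ because group homomorphisms preserve conjugation, and surjectivity on the generating sets is part of the data of a morphism in $\Dsurj$. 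The ``faithfulness'' condition imposed in the definition of $\Dsurj$ is expected to be exactly what is needed to make this quandle faithful.

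First I verify that $\FF_{\mathrm{surj}}$ is a well-defined functor into $\Dsurj$. For an object $(Q, s)$: the set $s(Q)$ generates $\Inn Q$ by definition and is conjugation-stable because $\gamma s_x \gamma^{-1} = s_{\gamma(x)}$ for every $\gamma \in \Inn Q$; the additional conditions defining $\Dsurj$ follow from faithfulness of $Q$, which says $x \mapsto s_x$ is injective. For a morphism $f : Q \to Q'$, the functor $\Inn$ on surjective quandle homomorphisms recalled from \cite{Quotientsofquandles} provides a well-defined group homomorphism $\Inn(f) : \Inn Q \to \Inn Q'$ with $s_x \mapsto s_{f(x)}$, and surjectivity of $f$ forces $\Inn(f)$ to be surjective on the generating sets and hence globally.

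For the unit $\eta : \id_{\Qsurj} \Rightarrow \GG_{\mathrm{surj}} \circ \FF_{\mathrm{surj}}$, define $\eta_Q(x) := s_x$; bijectivity is faithfulness of $Q$, and the quandle-homomorphism property reduces to the standard identity $s_{x \triangleleft y} = s_y^{-1} s_x s_y$, while naturality is immediate from $\Inn(f)(s_x) = s_{f(x)}$. For the counit $\varepsilon : \FF_{\mathrm{surj}} \circ \GG_{\mathrm{surj}} \Rightarrow \id_{\Dsurj}$, given $(G, S)$ one has $(\FF_{\mathrm{surj}} \circ \GG_{\mathrm{surj}})(G, S) = (\Inn(S), s(S))$, and the natural candidate is the inverse of the conjugation map $G \to \Inn(S)$, $g \mapsto (x \mapsto gxg^{-1})$. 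This map is surjective because $S$ generates $G$, so every element of $\Inn(S)$ is a product of symmetries $s_x$ with $x \in S$, and it carries $S$ bijectively onto $s(S)$ by construction together with faithfulness.

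The main obstacle is establishing injectivity of $G \to \Inn(S)$: its kernel consists of all $g \in G$ commuting with every element of $S$, which since $S$ generates $G$ coincides with the center $Z(G)$. The equivalence therefore forces the definition of $\Dsurj$ to make this kernel trivial, presumably via the same faithfulness-type condition that guarantees $\GG_{\mathrm{surj}}(G,S)$ is a faithful quandle. Verifying that the precise definition given in Section~\ref{section:Dsurj} strikes exactly the right balance---strong enough to make $\varepsilon_{(G,S)}$ an isomorphism for every $(G,S) \in \Dsurj$, yet loose enough to be satisfied by every $(\Inn Q, s(Q))$ arising from a faithful quandle---is the technical crux of the proof. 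Once this is pinned down, naturality of $\eta$ and $\varepsilon$ and the triangle identities are a routine check.
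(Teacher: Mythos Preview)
Your approach is essentially the same as the paper's: construct the quasi-inverse $\GG_{\mathrm{surj}}(G,\Omega)=\Conj(\Omega)$, then exhibit the natural isomorphisms $\theta_Q:s_x\mapsto x$ and $\eta_{(G,\Omega)}=\varphi_{(G,\Omega)}^{-1}$ where $\varphi_{(G,\Omega)}:G\to\Inn(\Conj(\Omega))$ is the conjugation map. The point you leave open---that the ``faithfulness'' condition in $\Dsurj$ is exactly triviality of the centralizer of $\Omega$ (equivalently of $Z(G)$), so that $\varphi_{(G,\Omega)}$ is injective---is precisely Definition~\ref{Def:Dgcf} together with Proposition~\ref{Prop:FGtoid_Disomvarphi}, and the paper verifies it just as you predict; note also that you need only natural isomorphisms, not the triangle identities, so that last remark can be dropped.
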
	
	\begin{Thm}\label{intromainthminj}
		There exists an equivalence $\FF_{\mathrm{inj}} : \Qinj \to \Dinj$ such that $\FF_{\mathrm{inj}}(Q, s) = (\Inn Q, s(Q))$ for each faithful quandle $(Q, s)$. 
	\end{Thm}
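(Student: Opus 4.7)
The plan is to mirror the proof strategy for $\FF_{\mathrm{surj}}$ in Theorem~\ref{intromainthmsurj}, paying close attention to the more delicate behavior of inner automorphism groups under injective quandle homomorphisms. I would organize the argument into (i) constructing the functor $\FF_{\mathrm{inj}}$ and checking functoriality, (ii) establishing essential surjectivity, and (iii) verifying full faithfulness.

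The object assignment $(Q,s)\mapsto(\Inn Q, s(Q))$ is the same as in $\FF_{\mathrm{surj}}$, so only the additional axioms required for objects of $\Dinj$ need to be checked; faithfulness of $Q$ makes the symmetry map $s:Q\to\Inn Q$ injective and therefore identifies $Q$ with $s(Q)$. The delicate part is the morphism assignment, because an injective quandle homomorphism $f:Q\hookrightarrow Q'$ does \emph{not}, in general, induce a group homomorphism $\Inn Q\to\Inn Q'$. The natural data attached to $f$ is instead the subgroup $H_f:=\langle s_{f(x)} : x\in Q\rangle\le\Inn Q'$ together with the assignment $s_x\mapsto s_{f(x)}$ from the generators of $\Inn Q$ onto those of $H_f$. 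The morphisms of $\Dinj$ defined in Sections~\ref{section:def_Dinj01} and~\ref{subsection:comp_of_Dinj} are engineered precisely to package this kind of datum into a composable structure, so $\FF_{\mathrm{inj}}(f)$ should be the $\Dinj$-morphism extracted from $(H_f,\,s_x\mapsto s_{f(x)})$. Functoriality is then verified by matching the composition law in $\Dinj$ against the obvious composition on such data.

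For essential surjectivity, given $(G,X)\in\Dinj$, I would equip $X$ with the conjugation quandle operation $x\triangleright y:=yxy^{-1}$ computed inside $G$. The defining conditions of $\Dinj$ should force the resulting quandle $Q_X$ to be faithful, and the identification $\Inn Q_X\cong G$ determined by $s_x\mapsto x$ produces an isomorphism $\FF_{\mathrm{inj}}(Q_X)\cong(G,X)$ in $\Dinj$, in close parallel to the corresponding step for $\FF_{\mathrm{surj}}$.

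The main obstacle, and where the intricate definition of $\Dinj$-morphisms is indispensable, is fullness. Given a $\Dinj$-morphism from $(\Inn Q, s(Q))$ to $(\Inn Q', s(Q'))$, I must reconstruct an injective quandle homomorphism $f:Q\hookrightarrow Q'$. Using faithfulness to identify $Q$ with $s(Q)$, the morphism immediately produces an injective set map $f:Q\to Q'$; the nontrivial step is to show that $f$ preserves the quandle operation, which should follow from the group-level compatibility built into $\Dinj$-morphisms together with the identity $x\triangleright y = s_y(x)$. Faithfulness of $\FF_{\mathrm{inj}}$ on morphisms is then automatic, since any preimage of such a $\Dinj$-morphism is determined by its values on the generating set $s(Q)$.
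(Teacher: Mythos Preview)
Your plan is sound and will succeed, but it is organized differently from the paper and contains one imprecision worth correcting.

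\textbf{Organizational difference.} The paper does not verify that $\FF_{\mathrm{inj}}$ is full, faithful, and essentially surjective. Instead it explicitly builds an inverse functor $\GG_{\mathrm{inj}}:\Dinj\to\Qinj$ by $(G,\Omega)\mapsto\Conj(\Omega)$ and $((H,\Gamma),\pi)\mapsto(\omega_1\mapsto\pi|_\Gamma^{-1}(\omega_1))$, and then writes down natural isomorphisms $\theta:\GG\FF\Rightarrow\id_{\Qinj}$ and $\eta:\FF\GG\Rightarrow\id_{\Dinj}$ directly. Your essential-surjectivity step is exactly the object-level content of $\GG$ together with $\eta$, and your full-faithfulness step is the morphism-level content of $\GG$ together with $\theta$; so the two routes cover the same ground. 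Your route is slightly more economical because you never need to check that $\GG$ respects composition in $\Dinj$, while the paper's route makes the inverse equivalence explicit.

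\textbf{A point to fix.} You describe the data attached to $f:Q\hookrightarrow Q'$ as ``the assignment $s_x\mapsto s_{f(x)}$ from the generators of $\Inn Q$ onto those of $H_f$''. But a morphism of $\Dinj$ from $(\Inn Q,s(Q))$ to $(\Inn Q',s(Q'))$ is a pair $((H,\Gamma),\pi)$ with $\pi:H\to\Inn Q$ going \emph{backwards}, and it is this backwards map $\pi:H_f\to\Inn Q$, $s_{f(x)}\mapsto s_x$, that must be shown to extend to a well-defined group homomorphism. The forward assignment you wrote does \emph{not} extend (this is precisely the failure you flagged). The well-definedness of the backward $\pi$ is the one genuinely new computation in the injective case: given a relation $s_{f(x_{1})}^{\varepsilon_1}\cdots s_{f(x_{m})}^{\varepsilon_m}=s_{f(y_{1})}^{\delta_1}\cdots s_{f(y_{n})}^{\delta_n}$ in $\Inn(Q',f(Q))$, apply both sides to $f(z)$ for arbitrary $z\in Q$, use that $f$ is a quandle homomorphism to pull $f$ outside, and then invoke injectivity of $f$ to conclude the corresponding relation in $\Inn Q$. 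Once you make this step explicit, the rest of your outline goes through.
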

	
	In particular, for each pair of faithful quandles $(Q_1, Q_2)$, we have the following bijections:
	\begin{align}
		\Hom_{\Qsurj}(Q_1, Q_2) &\overset{1:1}{\leftrightarrow} \Hom_{\Dsurj}((\Inn(Q_1), s(Q_1)), (\Inn(Q_2), s(Q_2))), \label{eq:hombijsurj}\\
		\Hom_{\Qinj}(Q_1, Q_2) &\overset{1:1}{\leftrightarrow} \Hom_{\Dinj}((\Inn(Q_1), s(Q_1)), (\Inn(Q_2), s(Q_2))). \label{eq:hombijinj} 
	\end{align}
	
	This paper is organized as follows. 
	In Section~\ref{section_preliminaries}, we recall some notions on categories and those on quandles. 
	We also define 	several categories of groups with generators. 
	Theorems~\ref{intromainthmsurj} and \ref{intromainthminj} will be discussed in Sections~\ref{section_surj} and \ref{section_inj}, respectively. 
	In Section~\ref{section_app}, as an easy application of Theorem~\ref{intromainthminj}, 
	we study the set of all injective quandle homomorphisms from the dihedral quandle $R_3$ of order $3$ to the dihedral quandle $R_9$ of order $9$.

\section{Preliminaries}\label{section_preliminaries}
	In this section, we recall some notions on categories and those on quandles. 
	We also define some categories of groups with generators. 
	
	\subsection{Notions on the category theory}
		In this subsection, we recall some notions on the category theory. 
		For details, see \cite{Category}.
		\begin{Def}[category]
			A (locally small) {\it category} $C$ consists of the following:
			\begin{itemize}
				\item A collection $\Obj(C)$ of objects.
				\item For each $c_1, c_2\in \Obj(C)$, a set $\Hom_{C}(c_1, c_2)$ of morphisms from $c_1$ to $c_2$.  
				\item For each $c_1, c_2$ and $c_3 \in \Obj(C)$, a map: 
					$$
					\begin{array}{ccc}
						\Hom_{C}(c_2, c_3)\times \Hom_{C}(c_1,c_2) & \to & \Hom_C(c_1, c_3)\\
						(g,f) & \mapsto & g\circ f, 
					\end{array}
					$$
					called composition. 
				\item For each $c\in \Obj(C)$, an element $\id_{c}$ of $\Hom_C(c,c)$, called the identity on $c$,  
			\end{itemize}
			where the following axioms hold: 
			\begin{itemize}
				\item(Associativity) For each $f\in \Hom_C(c_1, c_2), g\in \Hom_C(c_2, c_3)$ and $h\in \Hom_C(c_3, c_4)$, we have $h\circ(g\circ f) = (h\circ g)\circ f$. 
				\item(Identity laws) For each $f\in\Hom_C(c_1, c_2)$, we have $f\circ \id_{c_1} = f = \id_{c_2} \circ f$. 
			\end{itemize}
		\end{Def}
		
		In Sections~\ref{subsection:notationquandle} and \ref{section:Dsurj}, we define some categories as subcategories or full subcategories of several categories. 
		The definitions of subcategories and full subcategories are given as follows. 
		
		\begin{Def}[subcategory, full subcategory]
			Let $C$ be a category. A {\it subcategory} $C'$ of $C$ consists of a subclass $\Obj(C')$ of $\Obj(C)$ together with, for each $c_1, c_2\in \Obj(C')$, a subset $\Hom_{C'}(c_1, c_2)$ of $\Hom_C(c_1, c_2)$ such that $C'$ is closed under the composition and identities. 
			It is a {\it full} subcategory if $\Hom_{C'}(c_1, c_2)=\Hom_{C}(c_1, c_2)$ for all $c_1, c_2\in \Obj(C')$. 
		\end{Def}
		
		We recall notions of isomorphisms and functors in order to define category equivalences. 
		
		\begin{Def}[isomorphism]
			\begin{enumerate}
				\item  A morphism $f\in\Hom_C(c_1, c_2)$ is called an {\it isomorphism} from $c_1$ to $c_2$ in $C$ if there exists $g\in\Hom_C(c_2,c_1)$ such that $g\circ f = \id_{c_1}$ and $f\circ g = \id_{c_2}$. Such the morphism $g$ is called the {\it inverse} of $f$. 
				\item An object $c_1\in\Obj(C)$ is said to be {\it isomorphic} to an object $c_2\in\Obj(C)$ if there exists an isomorphism from $c_1$ to $c_2$. 
					For such $c_1$ and $c_2$, we write $c_1\cong c_2$. 
			\end{enumerate}
		\end{Def}
		 
		\begin{Def}[functor, faithful functor, full functor]
			For two categories $C$ and $D$, a ({\it covariant}) {\it functor} $\FF : C\to D$ consists of the following: 
			\begin{itemize}
				\item A function $\Obj(C)\to \Obj(D)$, written as $c\mapsto \FF c$.
				\item For each $c_1, c_2\in\Obj(c)$, a map $\Hom_C(c_1, c_2)\to \Hom_D(\FF c_1, \FF c_2)$, written as $f\mapsto \FF f$, 
			\end{itemize}
			where the following axioms hold: 
			\begin{itemize}
				\item For each $f\in\Hom_C(c_1, c_2)$ and $g\in\Hom_C(c_2, c_3)$, we have $\FF(g\circ f) = \FF g \circ \FF f$. 
				\item For each $c\in \Obj(C)$, we have $\FF\id_c = \id_{\FF c}$. 
			\end{itemize}
			A functor $\FF : C\to D$ is said to be {\it faithful} (resp.~{\it full}) if, for each $c_1, c_2\in \Obj(C)$, the following map is injective (resp.~ surjective): 
					$$
					\begin{array}{ccc}
						\Hom_{C}(c_1, c_2) & \to & \Hom_D(\FF c_1, \FF c_2)\\
						f & \mapsto & \FF f. 
					\end{array}
					$$
			A functor $\FF : C\to D$ is said to be {\it essentially surjective on objects} if, for all $d\in \Obj(D)$, there exists $c\in\Obj(C)$ such that $\FF c\cong d$ on $D$. 
		\end{Def}
		
		We also define category equivalences. 
		
		\begin{Def}[natural isomorphism, category equivalence]
			For two functors $\FF, \GG : C \to D$, a {\it natural isomorphism} $\theta : \FF \Rightarrow \GG$ is a family $\{\theta_c : \FF c \to \GG c\}_{c\in \Obj(C)}$ of isomorphisms in $D$ such that, for each morphism $f : c_1 \to c_2$ in $C$, the following diagram commutes: 
			
			\[
			\begin{tikzpicture}[auto]
				\node (Q_1) at (0, 0) {$c_1$}; 
				\node (Q_2) at (0, -1.5) {$c_2$};  

				\node (GFQ_1) at (2, 0) {$\FF c_1$};
				\node (GFQ_2) at (2, -1.5) {$\FF c_2$};
				\node (Q_10) at (3.5, 0) {$\GG c_1$};
				\node (Q_20) at (3.5, -1.5) {$\GG c_2$}; 
				
				\draw[->] (Q_1) to node {$\scriptstyle f$} (Q_2);
				\draw[->] (Q_10) to node {$\scriptstyle \GG f$} (Q_20);
				\draw[->] (GFQ_1) to node {$\scriptstyle \theta_{c_1}$} (Q_10);
				\draw[->] (GFQ_2) to node [swap]{$\scriptstyle \theta_{c_2}$} (Q_20);
				\draw[->] (GFQ_1) to node [swap]{$\scriptstyle \FF f$} (GFQ_2);
			\end{tikzpicture}
			\]
			A {\it category equivalence} between $C$ and $D$ consists of a pair of functors 
		$C \xtofrom[\GG]{\FF} D$ 
		together with natural isomorphisms 
		$\theta : \id_{C} \Rightarrow \GG\FF$ and $\eta : \FF\GG \Rightarrow \id_{D}$. 
		\end{Def}
		
		In Sections~\ref{subsection:equivalence_surj} and \ref{subsection:equivalence_inj}, we prove Theorem~\ref{intromainthmsurj} and \ref{intromainthminj}, respectively, according to the definition of category equivalence above. 
		
		The following is a well known proposition on category theory, 
		and induces bijections \eqref{eq:hombijsurj} and \eqref{eq:hombijinj} in Section~\ref{Intro} from Theorem~\ref{intromainthmsurj} and \ref{intromainthminj}. 
		
		\begin{Prop}[{\cite[Proposition~1.3.18]{Category}}]\label{Prop:faithfulfulless}
			Let $\FF : C\to D$ be a functor. Then $\FF$ gives a category equivalence if and only if $\FF$ is faithful, full and essentially surjective on objects. 
		\end{Prop}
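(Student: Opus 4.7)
The plan is to prove both implications separately. For the ``only if'' direction, I assume $\FF$ participates in an equivalence with quasi-inverse $\GG$ and natural isomorphisms $\theta : \id_C \Rightarrow \GG\FF$ and $\eta : \FF\GG \Rightarrow \id_D$. Essential surjectivity is immediate: for any $d \in \Obj(D)$, the component $\eta_d : \FF\GG d \to d$ is an isomorphism, so $\FF(\GG d) \cong d$. For faithfulness, given $f, g \in \Hom_C(c_1, c_2)$ with $\FF f = \FF g$, the naturality squares of $\theta$ yield $\theta_{c_2} \circ f = \GG\FF f \circ \theta_{c_1} = \GG\FF g \circ \theta_{c_1} = \theta_{c_2} \circ g$, and since $\theta_{c_2}$ is invertible we conclude $f = g$. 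For fullness, given $h : \FF c_1 \to \FF c_2$ in $D$, I set $f := \theta_{c_2}^{-1} \circ \GG h \circ \theta_{c_1}$ and verify that $\GG\FF f = \GG h$ via naturality; then the faithfulness of $\GG$ (which holds by the same argument with the roles of $\FF$ and $\GG$ swapped) forces $\FF f = h$.

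For the ``if'' direction, I construct a quasi-inverse $\GG$ by hand. For each $d \in \Obj(D)$, essential surjectivity lets me choose an object $\GG d \in \Obj(C)$ together with an isomorphism $\eta_d : \FF \GG d \to d$ in $D$. For each morphism $g : d_1 \to d_2$ in $D$, the composite $\eta_{d_2}^{-1} \circ g \circ \eta_{d_1}$ is a morphism $\FF\GG d_1 \to \FF\GG d_2$ in $D$; by fullness there exists a morphism $\GG g : \GG d_1 \to \GG d_2$ in $C$ with $\FF \GG g = \eta_{d_2}^{-1} \circ g \circ \eta_{d_1}$, and by faithfulness this $\GG g$ is unique. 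Functoriality of $\GG$ (preservation of composition and identities) follows formally from this uniqueness, and the family $\{\eta_d\}$ is then a natural isomorphism $\FF\GG \Rightarrow \id_D$ by the very definition of $\GG$ on morphisms.

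It remains to produce $\theta : \id_C \Rightarrow \GG\FF$. For each $c \in \Obj(C)$, the morphism $\eta_{\FF c}^{-1} : \FF c \to \FF\GG\FF c$ is a morphism in $D$, so by fullness there is $\theta_c : c \to \GG\FF c$ in $C$ with $\FF\theta_c = \eta_{\FF c}^{-1}$; faithfulness again makes $\theta_c$ unique. To see that $\theta_c$ is an isomorphism, I use the standard fact that a fully faithful functor reflects isomorphisms: since $\FF\theta_c = \eta_{\FF c}^{-1}$ is invertible in $D$, its inverse can be pulled back through $\FF$ (by fullness) to an inverse for $\theta_c$ in $C$ (confirmed by faithfulness). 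Naturality of $\theta$ is then verified by applying $\FF$ to both sides of the required square and invoking faithfulness, reducing it to a naturality statement about $\eta$ that holds by construction.

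The main obstacle, modest as it is, lies not in any single step but in the bookkeeping of repeatedly using the pattern ``construct a morphism in $C$ by fullness, characterize it uniquely by faithfulness, and verify equations by applying $\FF$ and comparing in $D$.'' In particular, checking functoriality of $\GG$ and naturality of $\theta$ both require careful juggling of this principle, and the construction of $\GG$ on objects involves an appeal to a (possibly large) choice, which I regard as standard in this setting.
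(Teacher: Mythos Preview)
Your proof is correct and follows the standard textbook argument. Note, however, that the paper does not actually prove this proposition: it is stated with a citation to Leinster's \emph{Basic Category Theory} (Proposition~1.3.18) and used as a black box. Your argument is essentially the one found there, so there is nothing further to compare.
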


	\subsection{Notions on quandles}\label{subsection:notationquandle}
		In this subsection, we fix our terminologies for quandles, subquandles, faithful quandles and their categories. 
		
		Quandles are usually defined by sets with binary operators satisfying three axioms, 
		derived from the Reidemeister moves of classical knots. 
		However, we employ a formulation in terms of symmetries as \cite{Joyce}. 
		For a set $Q$, we write 
			$$\Map(Q, Q) := \{f : Q\to Q : \text{a map}\}. $$
		
		\begin{Def}[quandle, quandle homomorphism]
			Let $Q$ be a set.  We consider a map 
				$$s:Q\to \Map(Q, Q): x\mapsto s_x. $$ 
			Then the pair $(Q, s)$ is a {\it quandle} if
			\begin{itemize}
				\item[(Q1)] $\forall x\in Q, s_x(x)=x$, 
				\item[(Q2)] $\forall x\in Q, s_x$ is bijective,  
				\item[(Q3)] $\forall x, y\in Q, s_{x}\circ s_y = s_{s_{x}(y)}\circ s_x$. 
			\end{itemize}
			For a quandle $(Q, s)$, the map $s$ is called a {\it quandle structure} on $Q$. 
			For each $x\in Q$, the map $s_{x}$ is called a {\it symmetry} at $x$ on $Q$. 
			
			Let $(Q_1, s^{Q_1})$ and $(Q_2, s^{Q_2})$ be quandles. 
			Then $f:Q_1\to Q_2$ is called a {\it quandle homomorphism} if, for any $x_1\in Q_1$, it satisfies
					$$f\circ s_{x_1}^{Q_1} = s_{f(x_1)}^{Q_2}\circ f. $$
		\end{Def}

		We recall the notion of faithful quandles. 
		
		\begin{Def}[faithful quandle]
			A quandle $(Q, s)$ is said to be {\it faithful} if $s_x=s_y$ implies $x=y$ for all $x, y \in Q$. 
		\end{Def}
		
		In this paper, we use the symbol $\mathbf{Q}$ for the category of quandles and quandle homomorphisms. 
		Let us define categories $\Q, \Qsurj$ and $\Qinj$ related to faithful quandles as follows. 
		
		\begin{Def}[$\Q$, $\Qsurj$, $\Qinj$]
			We write $\Q$ for the full subcategory of $\mathbf{Q}$ consists of faithful quandles and quandle homomorphisms. 
			Furthermore, we use the symbol $\Qsurj$ (resp.~$\Qinj$) for the subcategory of $\Q$ with surjective (resp.~ injective) quandle homomorphisms. 
		\end{Def}
		
		Our goal in this paper is to study $\Qsurj$ and $\Qinj$ in terms of the theory of groups. 
		
		We also recall the notion of subquandles. 
		
		\begin{Def}[subquandle]
			For a quandle $(Q, s)$, a subset $Q'$ in $Q$ is called a {\it subquandle} of $(Q, s)$
				if $s_x(y), s_x^{-1}(y)\in Q'$ for all $x, y\in Q'$. 
		\end{Def}
		
		A subquandle $Q'$ of $(Q, s)$ becomes a quandle with $s|_{Q'} : Q' \to \Map(Q', Q')$. 
		
		Here we set up our notation for conjugation quandles and their specific subquandles. 
		
		\begin{Def}[conjugation quandle]
			Let $G$ be a group and define the map $s : G\to \Map(G,G)$ as follows:
			$$s_{g}(h) = ghg^{-1} \quad (g, h\in G). $$
			Then $(G, s)$ is a quandle. 
			Such the quandle is called a {\it conjugation quandle}, and denoted by $\Conj(G)$. 
			Furthermore let $\Omega$ be a union of some conjugacy classes of $G$.  
			Then $\Omega$ is a subquandle of $\Conj(G)$, which is written as $\Conj_{G}(\Omega)$ or simply $\Conj(\Omega)$. 
		\end{Def}

		One can easily see that the following lemma holds. 
		
		\begin{Lem}\label{Lem:Conjfaithful}
			Let $G$ be a group and $\Omega$ a union of some conjugacy classes of $G$. 
			Then $\Conj_G(\Omega)$ is faithful if 
				the centralizer of $\Omega$ in $G$ is trivial. 
		\end{Lem}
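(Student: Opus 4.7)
The plan is to unpack what faithfulness of $\Conj_G(\Omega)$ means and reduce it directly to the centralizer hypothesis. Recall that the underlying set of $\Conj_G(\Omega)$ is $\Omega$, and for $x \in \Omega$ the symmetry $s_x \colon \Omega \to \Omega$ is given by $s_x(h) = xhx^{-1}$. Faithfulness demands: for $x, y \in \Omega$, the equality $s_x = s_y$ (as maps on $\Omega$) forces $x = y$.

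First, I would fix $x, y \in \Omega$ with $s_x = s_y$ and rewrite the pointwise equality
\[
xhx^{-1} = yhy^{-1} \quad \text{for every } h \in \Omega
\]
in the equivalent form $(y^{-1}x) h = h (y^{-1}x)$ for all $h \in \Omega$. By the very definition of the centralizer of $\Omega$ in $G$, this says exactly $y^{-1}x \in Z_G(\Omega)$.

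Then I would invoke the hypothesis that $Z_G(\Omega)$ is trivial to conclude $y^{-1}x = e$, hence $x = y$, which is faithfulness of $\Conj_G(\Omega)$. No real obstacle arises; the only point worth highlighting is that the symmetries are maps on the subquandle $\Omega$ rather than on all of $G$, but this is harmless because commuting with every element of $\Omega$ is precisely membership in $Z_G(\Omega)$ as defined.
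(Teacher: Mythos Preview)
Your argument is correct: the equality $s_x = s_y$ on $\Omega$ yields $y^{-1}x \in Z_G(\Omega)$, and the triviality hypothesis forces $x = y$. The paper does not supply a proof for this lemma (it simply remarks that one can easily see it holds), so your write-up is exactly the kind of routine verification the authors had in mind.
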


		\begin{Rem}\label{Rem:Q^f_dont_have_s-i-factorization}
			It should be remarked that any morphism of $\mathbf{Q}$ has the surjective-injective factorization, 
			i.e.~for any morphism $f : Q_1 \to Q_2 $ of $\mathbf{Q}$, there exists an object $Q$ of $\mathbf{Q}$, a surjective morphism $f_s : Q_1 \to Q$ and an injective morphism $f_i : Q \to Q_2$ such that $f=f_i\circ f_s$. 
			In fact, one can take $Q$ as $f(Q_1)$, $f_s$ as $f$ and $f_i$ as the inclusion. 
			However, some morphisms of $\Q$ do not have surjective-injectve factorizations in $\Q$. 
			Actually, let us consider the following group homomorphism 
					$$f : \GS_3\to \GS_3 / \GA_3\cong_{\Grp} C_2 \overset{\iota}{\hookrightarrow} \GS_3, $$
				where $\GS_3$ denotes the symmetric group of degree three, $\GA_3$ the alternating group of degree three, $C_2$ the cyclic group of order two, and we fix $\iota$ as any injective group homomorphism. 
			Then $\Conj(\GS_3)$ is a faithful quandle, $f : \Conj(\GS_3) \to \Conj(\GS_3)$ is a morphism of $\Q$ and $\Image f \cong_{\mathbf{Q}} \Conj(C_2)$ is not faithful. 
			In particular, the morphism $f$ does not have the surjective-injectve factorization in $\Q$. 
		\end{Rem}

	\subsection{The group of inner automorphisms}
		In this subsection, we recall the notion of  inner automorphism groups of quandles. 
		
		\begin{Def}[inner automorphism group]
			Let $(Q, s)$ be a quandle and $Q'$ a subquandle of $Q$. 
			We use the symbol $\Inn (Q, Q')$ for the group generated by the set $s(Q')=\{s_x : Q\to Q\mid x\in Q'\}$. 
			$\Inn(Q, Q)$ is denoted by $\Inn Q$. 
			The group $\Inn Q$ is called the {\it inner automorphism group} of $(Q, s)$. 
		\end{Def}
				
		One can easily show the following lemma. 
		
		\begin{Lem}\label{Lem:inner_auto}
			Let $(Q, s)$ be a quandle and $Q'$ a subquandle of $Q$. 
			Then the generator $s(Q')$ of $\Inn(Q, Q')$ is stable by the following $\Inn(Q, Q')$-action on $\Aut(Q)$: 
			$$g.\phi = g\phi g^{-1} \quad (g\in \Inn(Q, Q'), \phi \in \Aut(Q)). $$
			Furthermore, if $Q$ is faithful, then the action $\Inn(Q)\curvearrowright s(Q)$ is faithful. 
		\end{Lem}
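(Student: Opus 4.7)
The plan is to derive both claims from the single identity
$\phi \circ s_y \circ \phi^{-1} = s_{\phi(y)}$, valid for every $\phi \in \Aut(Q)$ and every $y \in Q$. This identity is just a rewriting of the defining condition of a quandle homomorphism $\phi \circ s_y = s_{\phi(y)} \circ \phi$, applied to an automorphism.

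For the stability claim, I would first handle conjugation by a single generator $s_x$ with $x \in Q'$. Axiom (Q3), rewritten, gives $s_x \, s_y \, s_x^{-1} = s_{s_x(y)}$; since $Q'$ is a subquandle, $s_x(y) \in Q'$ for every $y \in Q'$, so $s_x \, s_y \, s_x^{-1} \in s(Q')$. Applying the identity above with $\phi = s_x^{-1}$ instead (and using that a subquandle is closed under $s_x^{-1}$ as well) shows the same for conjugation by $s_x^{-1}$. An easy induction on the word length of $g$ in the generators $\{s_x^{\pm 1} : x \in Q'\}$ of $\Inn(Q, Q')$ then yields $g \, s_y \, g^{-1} \in s(Q')$ for all $g \in \Inn(Q, Q')$ and all $y \in Q'$, which is the desired stability.

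For the faithfulness claim, suppose $g \in \Inn(Q)$ satisfies $g \, s_y \, g^{-1} = s_y$ for every $y \in Q$. Since $g \in \Aut(Q)$, the same identity from the first paragraph gives $s_{g(y)} = g \, s_y \, g^{-1} = s_y$. By the faithfulness of $Q$, this forces $g(y) = y$ for every $y \in Q$, hence $g = \id_Q$, proving that the conjugation action $\Inn(Q) \curvearrowright s(Q)$ has trivial kernel.

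The argument is essentially formal from axiom (Q3) and the definitions, so I do not anticipate any real obstacle. The only point worth checking carefully is that the definition of subquandle in the excerpt requires closure under both $s_x$ and $s_x^{-1}$, which is exactly what is needed to conjugate by arbitrary elements of $\Inn(Q, Q')$ rather than only by the $s_x$ themselves.
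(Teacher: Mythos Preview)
Your argument is correct and is precisely the standard verification the paper has in mind: the paper does not actually prove Lemma~\ref{Lem:inner_auto}, merely stating ``One can easily show the following lemma,'' and your proof supplies those routine details via the identity $\phi\, s_y\, \phi^{-1}=s_{\phi(y)}$ together with the subquandle closure under both $s_x$ and $s_x^{-1}$.
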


		\begin{Rem}\label{Rem:Inn_not_functor}
			Let us denote by $\Grp$ the category of groups and group homomorphisms. 
			One may expect that the correspondence $\Inn : \Obj(\mathbf{Q}) \to \Obj(\Grp) : Q \mapsto \Inn Q$ extends to a functor from $\mathbf{Q}$ to $\Grp$. 
			As shown in \cite{Quotientsofquandles}, ``$\Inn$'' becomes a functor for surjective quandle homomorphisms, 
			i.e.~``$\Inn$'' induces a functor $\mathbf{Q}_{\mathbf{surj}} \to \Grp$, where $\mathbf{Q}_{\mathbf{surj}}$ is the category of quandles with surjective quandle homomorphisms, by considering the following correspondence $f\mapsto \Inn(f)$:
			For quandles $Q_1, Q_2$ and a surjective quandle homomorphism $f : Q_1 \to Q_2$, 
			there uniquely exists a group homomorphism $\Inn(f)$ such that the following diagram commutes: 
			\[
			\begin{tikzpicture}[auto]
				\node (Q_1) at (0, 0) {$Q_1$}; 
				\node (Q_2) at (3, 0) {$Q_2$};  
				\node (Inn_1) at (0, -2) {$\Inn(Q_1)$};
				\node (Inn_2) at (3, -2) {$\Inn(Q_2)$};

				\draw[->>] (Q_1) to node {$\scriptstyle f$} (Q_2);
				
				\draw[->] (Q_1) to node [swap]{$\scriptstyle s$} (Inn_1);
				\draw[->] (Q_2) to node {$\scriptstyle s$} (Inn_2);
				
				\draw[->>] (Inn_1) to node [swap]{$\scriptstyle \Inn(f)$} (Inn_2);
			\end{tikzpicture}
			\]
			
			However, for $\mathbf{Q}$ or $\Q$ instead of $\mathbf{Q}_{\mathrm{surj}}$, 
			the following statement is not always true: For objects $Q_1, Q_2$ and a morphism $f : Q_1 \to Q_2$ of $\mathbf{Q}$ or $\Q$, there exists a group homomorphism $\Inn(Q_1) \to \Inn(Q_2)$ such that the diagram below commutes: 
			\[
			\begin{tikzpicture}[auto]
				\node (Q_1) at (0, 0) {$Q_1$}; 
				\node (Q_2) at (3, 0) {$Q_2$};  
				\node (Inn_1) at (0, -2) {$\Inn(Q_1)$};
				\node (Inn_2) at (3, -2) {$\Inn(Q_2)$};

				\draw[->] (Q_1) to node {$\scriptstyle f$} (Q_2);
				
				\draw[->] (Q_1) to node [swap]{$\scriptstyle s$} (Inn_1);
				\draw[->] (Q_2) to node {$\scriptstyle s$} (Inn_2);
				
				\draw[->] (Inn_1) to node [swap]{} (Inn_2);
			\end{tikzpicture}
			\]

			Actually, let $T_1$ be the trivial quandle of order $1$ and $R_3$ the dihedral quandle of order $3$. 
			For any quandle homomorphism $f : T_1 \to R_3$, it is not true that there exists a group homomorphism $\Inn(T_1) \to \Inn(R_3)$ such that the following diagram commutes:  
			\[
			\begin{tikzpicture}[auto]
				\node (Q_1) at (0, 0) {$T_1$}; 
				\node (Q_2) at (3, 0) {$R_3$};  
				\node (Inn_1) at (0, -2) {$\Inn(T_1)$};
				\node (Inn_2) at (3, -2) {$\Inn(R_3)$};

				\draw[->] (Q_1) to node {$\scriptstyle f$} (Q_2);
				
				\draw[->] (Q_1) to node [swap]{$\scriptstyle s$} (Inn_1);
				\draw[->] (Q_2) to node {$\scriptstyle s$} (Inn_2);
				
				\draw[->] (Inn_1) to node [swap]{} (Inn_2);
			\end{tikzpicture}
			\]
			Therefore, it is not easy to consider ``$\Inn$'' as a functor from $\mathbf{Q}$ (or $\Q$) to $\Grp$. 
		\end{Rem}

		\begin{Rem}[associate groups]
			For any quandle $Q$, one can associate a group $\As(Q)$ called the {\it associate group} (or {\it adjoint group}) of $Q$, which is defined as an abstract group 
			$\i<e_Q | R>$, where $e_Q=\{e_x\mid x\in Q\}, R=\{e_{s_{x}(y)}=e_xe_y(e_x)^{-1}\mid x, y\in Q\}$. 
			The correspondence $e_x\mapsto s_x$ gives a surjective group homomorphism from $\As(Q)$ onto $\Inn(Q)$. 
			It is known that ``$\As$'' becomes a functor $\mathbf{Q} \to \Grp$, and $\Conj : \Grp \to \mathbf{Q}$ gives a right adjoint functor of $\As$ (\cite{Joyce}). 
			It should be remarked that the associate group $\As(Q)$ is not needed to be finite even for a finite quandle $Q$. 
			
		\end{Rem}
	
	\subsection{Definitions of some categories of groups with generators}\label{section:Dsurj}
	
		In this subsection, we define categories $\Dg$, $\D$, $\Ds$, $\Db$, $\Dgcf$ and $\Dsurj$. 
		
		\begin{Def}[$\Dg$]
			We define a category $\Dg$ as follows. 
			Its object $(G, \Omega)$ is a pair of a group $G$ and its generator $\Omega$. 
			Its morphism $\varphi : (G_1, \Omega_1) \to (G_2, \Omega_2)$ 
				is a group homomorphism $\varphi : G_1\to G_2$ such that $\varphi(\Omega_1)\subset\Omega_2$. 
		\end{Def}
		
		We shall define the full subcategory $\D$ of $\Dg$ as follows. 
		
		\begin{Def}[$\D$]
			We denote by $\D$ the full subcategory of $\Dg$ whose generators of objects are conjugation-stable. 
			Here, for an object $(G, \Omega)$ of $\Dg$, the generator $\Omega$ is said to be {\it conjugation-stable} if $g\Omega g^{-1}\subset \Omega$ for any $g$ of $G$. 
		\end{Def}

		Let us define the subcategories $\Ds$ and $\Db$ of $\Dg$ as below. 
		
		\begin{Def}[$\Ds, \Db$]
			We define a category $\Ds$ (resp.~ $\Db$) as follows. 
			Let us put 
			\begin{align*}
				\Obj(\Ds)&:= \Obj(\D)\\
				(\text{resp.~ }\Obj(\Db)&:= \Obj(\D)).  
			\end{align*}
			 
			Its morphism $\varphi : (G_1, \Omega_1) \to (G_2, \Omega_2)$ is a morphism of $\D$ such that $\varphi|_{\Omega_1} : \Omega_1 \to \Omega_2$ is surjective (resp.~ bijective). 
		\end{Def}
	
		Note that for any morphism $\varphi : (G_1, \Omega_1) \to (G_2, \Omega_2)$ of $\Ds$ or $\Db$, $\varphi : G_1\to G_2$ is surjective. 
		
		We also define the full subcategory $\Dgcf$ of $\D$ as follows. 
		
		\begin{Def}[$\Dgcf$]\label{Def:Dgcf}
			We denote by $\Dgcf$ the full subcategory of $\D$ whose generators of objects are faithful.
			Here, for an object $(G, \Omega)$ of $\D$, the generator $\Omega$ is said to be {\it faithful} if the following action $G\curvearrowright \Omega$ is faithful: 
			\begin{align}
				g.\omega = g\omega g^{-1} \quad (g\in G, \omega \in \Omega). \label{left_action}
			\end{align}
		\end{Def}

		Remark that for an object $(G, \Omega)$ of $\D$, the action $G\curvearrowright \Omega$ is faithful if and only if the centralizer of $\Omega$ is trivial.
		Furthermore, these two conditions on $(G, \Omega)$ are also equivalent to the condition that the centralizer of $G$ is trivial, since $\Omega$ is a generator of $G$. 
		
		We shall define the subcategory $\Dsurj$ of $\Dgcf$ as follows. 
		
		\begin{Def}[$\Dsurj$]
			We denote by $\Dsurj$ the full subcategory of $\Ds$ with objects of $\Dgcf$. 
		\end{Def}
		
		Let us note that for objects of these categories, the following hold: 
		\begin{align*}
			\Obj(\Dgcf) &= \Obj(\Dsurj)\\
				&\subset \Obj(\D) = \Obj(\Ds) = \Obj(\Db)\\
				&\subset \Obj(\Dg). 
		\end{align*}

		The proposition below gives characterizations of isomorphisms in $\D$, $\Dgcf$ or $\Dsurj$.

		\begin{Prop}\label{Prop:Disom_Dsurjisom}
			\begin{itemize}
				\item[(1)] 
					Let us put $C = \D$ or $\Dgcf$. 
					Let $\varphi : (G_1, \Omega _1) \to (G_2, \Omega _2)$ be a morphism of $C$. 
					Then $\varphi$ is an isomorphism in $C$ if and only if $\varphi : G_1\to G_2$ is an isomorphism of $\Grp$ (i.e.~a group isomorphism) and $\varphi(\Omega _1) = \Omega _2$. 
				\item[(2)] Let $\varphi : (G_1, \Omega _1) \to (G_2, \Omega _2)$ be a morphism of $\Dsurj$. 
					Then $\varphi$ is an isomorphism in $\Dsurj$ if and only if $\varphi : G_1\to G_2$ is an isomorphism of $\Grp$  (i.e.~a group isomorphism). 
			\end{itemize}
		\end{Prop}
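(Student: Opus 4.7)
The plan is to handle both parts by exploiting the fact that in all three categories, a morphism being an isomorphism forces the underlying group homomorphism to be a group isomorphism, and conversely that a group-theoretic inverse automatically satisfies the extra categorical conditions once the generators are handled correctly.

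For the forward direction of (1), suppose $\varphi$ is an isomorphism in $C$ with inverse $\psi : (G_2,\Omega_2) \to (G_1,\Omega_1)$. Forgetting generators, $\psi\circ\varphi = \id_{G_1}$ and $\varphi\circ\psi = \id_{G_2}$ as group homomorphisms, so $\varphi$ is a group isomorphism. Moreover, $\varphi(\Omega_1)\subset\Omega_2$ and $\psi(\Omega_2)\subset\Omega_1$ by the definition of $\Dg$-morphisms, so $\Omega_2 = \varphi(\psi(\Omega_2))\subset\varphi(\Omega_1)\subset\Omega_2$, giving $\varphi(\Omega_1)=\Omega_2$. For the converse, let $\psi : G_2\to G_1$ denote the group-theoretic inverse; from $\varphi(\Omega_1)=\Omega_2$ we immediately obtain $\psi(\Omega_2)=\Omega_1$, so $\psi$ is a morphism in $\Dg$, hence in $\D$ since the latter is a full subcategory. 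When $C=\Dgcf$, $\psi$ is automatically a morphism of $\Dgcf$ again by fullness, the objects $(G_1,\Omega_1)$ and $(G_2,\Omega_2)$ being assumed in $\Dgcf$. Since $\psi\circ\varphi$ and $\varphi\circ\psi$ coincide with the identities on the underlying groups, they coincide with the identity morphisms of $C$, so $\varphi$ is an isomorphism in $C$.

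For (2), the forward direction is identical: if $\psi$ is the $\Dsurj$-inverse, then it inverts $\varphi$ as a group homomorphism. For the converse, suppose $\varphi : G_1\to G_2$ is a group isomorphism that happens to be a morphism of $\Dsurj$. Being a morphism of $\Dsurj\subset\Ds$ means $\varphi(\Omega_1)\subset\Omega_2$ and $\varphi|_{\Omega_1}:\Omega_1\to\Omega_2$ is surjective; combining these yields $\varphi(\Omega_1)=\Omega_2$. By part (1), $\varphi$ is an isomorphism in $\Dgcf$, with inverse $\psi : G_2\to G_1$ satisfying $\psi(\Omega_2)=\Omega_1$. In particular, $\psi|_{\Omega_2}:\Omega_2\to\Omega_1$ is surjective, so $\psi$ is in fact a morphism of $\Dsurj$, witnessing $\varphi$ as an isomorphism of $\Dsurj$.

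There is no serious obstacle; the only point needing care is to observe that since $\D$ and $\Dgcf$ are full subcategories of $\Dg$, the inverse supplied by the group structure automatically lives in the appropriate subcategory once the containment $\psi(\Omega_2)\subset\Omega_1$ is established, and for (2) one uses that the equality $\varphi(\Omega_1)=\Omega_2$ (forced by the surjectivity clause in the definition of $\Ds$) is precisely what is needed to apply (1) and additionally ensure the inverse satisfies the surjectivity-on-generators condition required by $\Dsurj$.
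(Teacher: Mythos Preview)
Your proof is correct and follows essentially the same approach as the paper's own proof: both directions of (1) and (2) are argued in the same way, using that the group-theoretic inverse $\psi$ satisfies $\psi(\Omega_2)=\Omega_1$ once $\varphi(\Omega_1)=\Omega_2$, and that in $\Dsurj$ this equality is automatic from the surjectivity-on-generators condition. Your version is slightly more explicit about invoking fullness of the subcategories and about appealing to part (1) inside part (2), but the substance is the same.
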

		
		\begin{proof}
			First we show the ``if'' part of the claim (1). Let $\psi : G_2 \to G_1$ be the inverse of $\varphi$ in $\Grp$. 
			Cleary, $\varphi(\Omega _1) = \Omega _2$ implies $\psi(\Omega_2) = \Omega_1$. Thus $\psi$ is a morphism of $C$. 
			It is obvious that $\psi\varphi = \id_{(G_1, \Omega_1)}$ and $\varphi\psi = \id_{(G_2, \Omega_2)}$. 
			Thus $\psi$ is the inverse of $\varphi$ in $C$. 
			
			Let us prove the ``only if'' part of the claim (1). 
			There exists an isomorphism $\psi : (G_2, \Omega_2) \to (G_1, \Omega_1)$ 
				such that $\psi\varphi = \id_{(G_1, \Omega_1)}$ and $\varphi\psi=\id_{(G_2, \Omega_2)}$. 
			Since $\psi$ is also a morphism of $\Grp$, $\varphi$ is an isomorphism of $\Grp$. 
			Furthermore, we also have $\varphi(\Omega_1) \supset \varphi(\psi(\Omega_2)) = \Omega_2$, and 
			hence $\varphi(\Omega_1) = \Omega_2$. 

			One can easily show the ``only if'' part of the claim (2). 
			Finally, we show the ``if'' part of the claim (2). Let $\psi : G_2 \to G_1$ be the inverse of $\varphi$ in $\Grp$. 
			Since $\varphi$ is a morphism of $\Dsurj$, one has $\varphi(\Omega _1) = \Omega _2$.
			Hence $\psi(\Omega_2) = \Omega_1$ and thus 
			$\psi$ is a morphism of $\Dsurj$. 
			It is obvious that $\psi\varphi = \id_{(G_1, \Omega_1)}$ and $\varphi\psi = \id_{(G_2, \Omega_2)}$. 
			Thus $\psi$ is the inverse of $\varphi$ in $\Dsurj$. 
		\end{proof}
		
		Let $(G, \Omega )$ be an object in $\Dgcf$. 
			By Lemma~\ref{Lem:inner_auto}, $(\Inn(\Conj(\Omega)), s(\Conj(\Omega)))$ is an object of $\Dgcf$. 
			By the definition of $\Conj(\Omega)$, the action $G \curvearrowright \Omega$ (see Definition~\ref{Def:Dgcf} \eqref{left_action}) leads a group homomorphism 
			$$\varphi_{(G, \Omega )} : G \to \Inn(\Conj(\Omega)) : g \mapsto (\varphi_{(G, \Omega )}(g) : \omega \mapsto g\omega g^{-1}). $$
		Note that $\varphi_{(G, \Omega)}(\omega) = s_{\omega}$ for each $\omega\in\Omega$. 
		
		The following proposition will be applied in Sections~\ref{section_surj} and \ref{section_inj}. 
		
		\begin{Prop}\label{Prop:FGtoid_Disomvarphi}
			In the setting above, $\varphi_{(G, \Omega)}$ is an isomorphism from $(G, \Omega)$ to $(\Inn(\Conj(\Omega)), s(\Conj(\Omega)))$ in $\D$, $\Dgcf$ and $\Dsurj$. 
		\end{Prop}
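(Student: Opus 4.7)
The plan is to apply Proposition~\ref{Prop:Disom_Dsurjisom}, which reduces the claim to checking that $\varphi_{(G,\Omega)}$ is a well-defined morphism in each of $\D$, $\Dgcf$ and $\Dsurj$, that the underlying map of groups is a group isomorphism, and that $\varphi_{(G,\Omega)}(\Omega)=s(\Conj(\Omega))$. Since $\Dgcf$ is a full subcategory of $\D$, and $\Dsurj$ is a full subcategory of $\Ds$ whose objects lie in $\Dgcf$, it suffices to verify the conditions of both parts of Proposition~\ref{Prop:Disom_Dsurjisom}.

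First I would check the elementary properties. Writing $\varphi=\varphi_{(G,\Omega)}$, the identity $(gh)\omega(gh)^{-1}=g(h\omega h^{-1})g^{-1}$ shows that $\varphi$ is a homomorphism to $\Aut(\Conj(\Omega))$. Using that $\varphi(\omega)=s_{\omega}$ for every $\omega\in\Omega$ and that $\Omega$ generates $G$, every $\varphi(g)$ is a product of symmetries $s_{\omega}^{\pm 1}$ with $\omega\in\Omega$, so the image of $\varphi$ lies in $\Inn(\Conj(\Omega))$. Moreover, since (as a set) $\Conj(\Omega)=\Omega$, we get
\[
\varphi(\Omega)=\{s_\omega\mid \omega\in\Omega\}=s(\Conj(\Omega)),
\]
so $\varphi$ is a morphism in $\D$, in $\Dgcf$ (because $(\Inn(\Conj(\Omega)),s(\Conj(\Omega)))\in\Obj(\Dgcf)$ by Lemma~\ref{Lem:Conjfaithful} and Lemma~\ref{Lem:inner_auto}), and in $\Dsurj$ (as $\varphi|_\Omega$ is surjective onto $s(\Conj(\Omega))$).

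Next I would establish that $\varphi:G\to\Inn(\Conj(\Omega))$ is a group isomorphism. Surjectivity is immediate: the codomain is, by definition, generated by $s(\Conj(\Omega))=\varphi(\Omega)\subset\varphi(G)$, so $\varphi(G)=\Inn(\Conj(\Omega))$. For injectivity, $\ker\varphi$ consists of those $g\in G$ with $g\omega g^{-1}=\omega$ for all $\omega\in\Omega$, i.e.\ the centralizer of $\Omega$ in $G$. By the remark following Definition~\ref{Def:Dgcf}, faithfulness of $(G,\Omega)$ is equivalent to the centralizer of $\Omega$ being trivial, so $\ker\varphi=\{e\}$.

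With these facts in hand, Proposition~\ref{Prop:Disom_Dsurjisom}(1) yields that $\varphi$ is an isomorphism in $\D$ and in $\Dgcf$, and Proposition~\ref{Prop:Disom_Dsurjisom}(2) yields that it is an isomorphism in $\Dsurj$. There is no substantial obstacle; the only mildly delicate point is bookkeeping the identification of $\Omega$ with $\Conj(\Omega)$ as sets so that $\varphi(\omega)=s_\omega$ literally realizes the bijection $\varphi(\Omega)=s(\Conj(\Omega))$ required by the morphism conditions.
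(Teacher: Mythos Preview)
Your proof is correct and follows essentially the same approach as the paper: reduce via Proposition~\ref{Prop:Disom_Dsurjisom} to checking that $\varphi_{(G,\Omega)}(\Omega)=s(\Conj(\Omega))$ and that $\varphi_{(G,\Omega)}$ is a group isomorphism, then obtain surjectivity from the first equality and injectivity from faithfulness of the action. The paper's proof is simply a terser version of what you wrote, relying on the setup before the proposition (which already records that the target lies in $\Dgcf$) rather than spelling out the homomorphism and morphism checks.
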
		
		\begin{proof}
			By definition, 
			$\varphi_{(G, \Omega)} : (G, \Omega) \to (\Inn(\Conj(\Omega)), s(\Conj(\Omega)))$ is a morphism of $\Dgcf$. 
			By Proposition~\ref{Prop:Disom_Dsurjisom}, 
			it is enough to show that $\varphi_{(G, \Omega)} : G \to \Inn(\Conj(\Omega))$ is bijective and $\varphi_{(G, \Omega)}(\Omega) = s(\Conj(\Omega))$. 
			Clearly, $\varphi_{(G, \Omega)}(\Omega) = s(\Conj(\Omega))$, and hence $\varphi_{(G, \Omega)} : G \to \Inn(\Conj(\Omega))$ is surjective.  
			Since $(G, \Omega)$ is an object of $\Dgcf$, the action $G \curvearrowright \Omega$ is faithful, so we have $\varphi_{(G, \Omega)} : G \to \Inn(\Conj(\Omega))$ is injective. 
		\end{proof}

\section{Categories with surjective homomorphisms}\label{section_surj}
	Let us recall that the following two categories are introduced in Sections~\ref{subsection:notationquandle} and \ref{section:Dsurj}: 
	\begin{itemize}
		\item  $\Qsurj$ : the category of faithful quandles and surjective quandle homomorphisms. 
		\item  $\Dsurj$ : the category of groups with conjugation-stable faithful generators,  
			whose morphisms are surjective group homomorphisms inducing surjective maps between fixed generators. 
	\end{itemize}
	In this section, we show that the categories $\Qsurj$ and $\Dsurj$ are equivalent.

	\subsection{Functors between $\Qsurj$ and $\Dsurj$}\label{subsection:functor_surj}
		In this subsection, we construct two functors between $\Qsurj$ and $\Dsurj$. 

		\subsubsection{A functor from $\Qsurj$ to $\Dsurj$}\label{subsubsection:surj_functor_right}
			
			We construct a functor 
			$$\FF_{\mathrm{surj}} : \Qsurj \to \Dsurj$$ 
			for objects in Lemma~\ref{Lem:surj_F_def_obj}, and for morphisms in Lemma~\ref{Lem:surj_F_def_mor}. 
    			For the simplicity, we just use the symbol $\FF$ for $\FF_{\mathrm{surj}}$ throughout Section~\ref{section_surj}. 
			
		\begin{Lem}\label{Lem:surj_F_def_obj}
			Let $Q$ be an object of $\Qsurj$. Then $\FF Q := (\Inn Q, s(Q))$ is an object of $\Dsurj$.  
		\end{Lem}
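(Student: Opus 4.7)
The plan is to verify that the pair $(\Inn Q, s(Q))$ satisfies each of the three defining conditions for being an object of $\Dsurj$, namely: (i) $s(Q)$ generates $\Inn Q$, (ii) $s(Q)$ is conjugation-stable inside $\Inn Q$, and (iii) the conjugation action $\Inn Q \curvearrowright s(Q)$ is faithful. Indeed, tracing the chain of inclusions $\Obj(\Dsurj) = \Obj(\Dgcf) \subset \Obj(\D) \subset \Obj(\Dg)$, these three conditions exactly correspond to membership in $\Dg$, $\D$, and $\Dgcf$ respectively.

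First I would note that condition (i) is immediate from the very definition of $\Inn Q$, which is the subgroup of $\Aut(Q)$ generated by $s(Q)$; thus $(\Inn Q, s(Q))$ lies in $\Obj(\Dg)$. For conditions (ii) and (iii), the work has already been done in Lemma~\ref{Lem:inner_auto}: applied with $Q' = Q$, that lemma states precisely that $s(Q)$ is stable under the conjugation action of $\Inn(Q)$ on $\Aut(Q)$ (giving (ii) and placing the pair in $\Obj(\D)$), and that the induced action $\Inn(Q) \curvearrowright s(Q)$ is faithful whenever $Q$ itself is faithful (giving (iii) and placing the pair in $\Obj(\Dgcf) = \Obj(\Dsurj)$).

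Since the hypothesis of Lemma~\ref{Lem:surj_F_def_obj} is that $Q$ is an object of $\Qsurj$, and $\Qsurj$ is a subcategory of $\Q$ whose objects are faithful quandles, the faithfulness hypothesis needed to invoke Lemma~\ref{Lem:inner_auto} is available. No obstacle of substance arises; the proof is essentially a bookkeeping verification that all three structural conditions on an object of $\Dsurj$ have already been recorded in earlier lemmas, and the statement follows by assembling them.
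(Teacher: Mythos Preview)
Your proof is correct and follows essentially the same approach as the paper, which dispatches the lemma with the single sentence ``It follows from Lemma~\ref{Lem:inner_auto}.'' You have simply unpacked that citation by spelling out which clause of Lemma~\ref{Lem:inner_auto} handles each of the three defining conditions for $\Obj(\Dsurj)$, and noted that the generation condition is immediate from the definition of $\Inn Q$.
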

		\begin{proof}
			It follows from Lemma~\ref{Lem:inner_auto}. 
		\end{proof}
		
		\begin{Lem}\label{Lem:surj_F_def_mor}
			Let $f :Q_1 \to Q_2$ be a morphism of $\Qsurj$.  
			Then the following $\FF f$ is well-defined and a morphism of $\Dsurj$: 
			$$\FF f: (\Inn Q_1, s(Q_1))\to (\Inn Q_2, s(Q_2)): s_{x_1}\mapsto s_{f(x_1)}. $$
		\end{Lem}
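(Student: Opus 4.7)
The plan is to define $\FF f$ on the level of words in the generators $s(Q_1)$ and to show the resulting prescription is well-defined on $\Inn Q_1$. Observe first that, since $Q_1$ is faithful, the map $x\mapsto s_x$ is a bijection $Q_1\to s(Q_1)$, so the rule $s_{x_1}\mapsto s_{f(x_1)}$ is at least unambiguous on the generating set. Then for $g \in \Inn Q_1$ expressed as $g = s_{x_1}^{\epsilon_1} \cdots s_{x_n}^{\epsilon_n}$ with $x_i \in Q_1$ and $\epsilon_i \in \{\pm 1\}$, I would set $\FF f(g) := s_{f(x_1)}^{\epsilon_1} \cdots s_{f(x_n)}^{\epsilon_n} \in \Inn Q_2$.

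The crucial tool is the intertwining identity $f \circ s_{x} = s_{f(x)} \circ f$ coming from $f$ being a quandle homomorphism, together with its consequence $f \circ s_x^{-1} = s_{f(x)}^{-1} \circ f$. Iterating this yields, for every $x \in Q_1$,
\[
f\bigl(s_{x_1}^{\epsilon_1} \cdots s_{x_n}^{\epsilon_n}(x)\bigr) = s_{f(x_1)}^{\epsilon_1} \cdots s_{f(x_n)}^{\epsilon_n}(f(x)).
\]
If two words $s_{x_1}^{\epsilon_1} \cdots s_{x_n}^{\epsilon_n}$ and $s_{y_1}^{\delta_1} \cdots s_{y_m}^{\delta_m}$ represent the same element of $\Inn Q_1$, then both sides of the displayed identity coincide for every $x \in Q_1$, so their images in $\Aut Q_2$ agree on $f(Q_1) \subset Q_2$. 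Since $f$ is surjective, $f(Q_1) = Q_2$, so the two images coincide as elements of $\Aut Q_2$, and hence in $\Inn Q_2$. This establishes well-definedness.

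Once well-definedness is in hand, $\FF f$ is a group homomorphism by construction, since concatenation of words corresponds to composition in $\Inn Q_2$. To see that it is a morphism of $\Dsurj$, I only need to verify that $\FF f$ restricts to a surjection $s(Q_1) \twoheadrightarrow s(Q_2)$: the inclusion $\FF f(s(Q_1)) \subset s(Q_2)$ is immediate from the definition, and surjectivity follows because every $s_y \in s(Q_2)$ has the form $s_{f(x)} = \FF f(s_x)$ by surjectivity of $f$. The underlying group homomorphism $\Inn Q_1 \to \Inn Q_2$ is then automatically surjective, since $s(Q_2)$ generates $\Inn Q_2$.

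The only delicate step is well-definedness, and this is exactly where surjectivity of $f$ enters: without it, two automorphisms of $Q_2$ that agree on $f(Q_1)$ need not agree on all of $Q_2$, which is precisely the phenomenon documented in Remark~\ref{Rem:Inn_not_functor} for non-surjective morphisms. Thus the main technical content of the lemma lies in leveraging the surjectivity of $f$ to upgrade the intertwining identity on $Q_1$ to equality of induced automorphisms on $Q_2$.
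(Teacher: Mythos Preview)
Your proof is correct and follows essentially the same approach as the paper: both arguments establish well-definedness by using the intertwining relation $f\circ s_x^{\pm 1}=s_{f(x)}^{\pm 1}\circ f$ to show that two words in the $s_{x_i}$ representing the same element of $\Inn Q_1$ yield automorphisms of $Q_2$ that agree on $f(Q_1)=Q_2$, and then deduce surjectivity on generators from surjectivity of $f$. Your additional remark that faithfulness of $Q_1$ makes the prescription unambiguous on individual generators is a nice clarification, though it is subsumed by the general well-definedness argument applied to words of length one.
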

		\begin{proof}
			Take any series $\{ x_{1i}\}_{i=1}^{m}, \{y_{1j}\}_{j=1}^{n} \subset Q_1$. 
			Assume $s_{x_{11}}^{\varepsilon_1} \cdots s_{x_{1m}}^{\varepsilon_m} =  s_{y_{11}}^{\delta_1} \cdots s_{y_{1n}}^{\delta_n}$ in $\Inn (Q_1)$, where $\varepsilon_i, \delta_j = 1$ or $-1$. 
			We show that 
			$$s_{f(x_{11})}^{\varepsilon_1} \cdots s_{f(x_{1m})}^{\varepsilon_m} = s_{f(y_{11})}^{\delta_1} \cdots s_{f(y_{1n})}^{\delta_n}$$
			 in $\Inn(Q_2)$. 
			Take any $z_2\in Q_2$. Since $f :Q_1\to Q_2$ is surjective, there exists $z_1\in Q_1$ such that $f(z_1)=z_2$. 
			We have 
			\begin{align*}
				s_{f(x_{11})}^{\varepsilon_1} \cdots s_{f(x_{1m})}^{\varepsilon_m} (z_2) &= s_{f(x_{11})}^{\varepsilon_1} \cdots s_{f(x_{1m})}^{\varepsilon_m} (f(z_1))\\
					&= f\circ (s_{x_{11}}^{\varepsilon_1} \cdots s_{x_{1m}}^{\varepsilon_m}) (z_1)\\
					&= f\circ (s_{y_{11}}^{\delta_1} \cdots s_{y_{1n}}^{\delta_n}) (z_1)\\
					&= s_{f(y_{11})}^{\delta_1} \cdots s_{f(y_{1n})}^{\delta_n} (f(z_1))\\
					&= s_{f(y_{11})}^{\delta_1} \cdots s_{f(y_{1n})}^{\delta_n} (z_2). 
			\end{align*}
			Hence $\FF f : \Inn Q_1 \to \Inn Q_2$ is a well-defined group homomorphism. 
			Moreover, since $f$ is surjective, $\FF f|_{s(Q_1)} : s(Q_1) \to s(Q_2)$ is surjective. 
		\end{proof}
		
		\begin{Prop}
			The above $\FF :\Qsurj \to \Dsurj$ is a functor. 
		\end{Prop}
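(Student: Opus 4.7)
The plan is to verify the two functor axioms (preservation of identities and of composition) for $\FF$. The key point, already used implicitly in the proof of Lemma~\ref{Lem:surj_F_def_mor}, is that for a morphism $f : Q_1 \to Q_2$ of $\Qsurj$, the map $\FF f$ is a group homomorphism out of $\Inn Q_1$ whose value on each generator $s_{x_1}\in s(Q_1)$ is prescribed by $\FF f(s_{x_1})=s_{f(x_1)}$. Since $s(Q_1)$ generates $\Inn Q_1$, any two group homomorphisms $\Inn Q_1 \to \Inn Q_2$ agreeing on $s(Q_1)$ coincide, so all identities in the functor axioms reduce to checks on generators.

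For the identity axiom, I would fix an object $Q\in\Obj(\Qsurj)$ and note that by construction $\FF(\id_Q)(s_x)=s_{\id_Q(x)}=s_x$ for every $x\in Q$; hence $\FF(\id_Q)$ and $\id_{\Inn Q}$ agree on the generating set $s(Q)$, so they are equal as group homomorphisms, i.e.\ $\FF(\id_Q)=\id_{\FF Q}$. For composition, given morphisms $f : Q_1\to Q_2$ and $g : Q_2\to Q_3$ in $\Qsurj$, I would evaluate both sides on a generator $s_{x_1}\in s(Q_1)$: on the one hand $\FF(g\circ f)(s_{x_1})=s_{(g\circ f)(x_1)}=s_{g(f(x_1))}$, and on the other $(\FF g \circ \FF f)(s_{x_1})=\FF g(s_{f(x_1)})=s_{g(f(x_1))}$. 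Again, equality on $s(Q_1)$ yields equality of the two homomorphisms, so $\FF(g\circ f)=\FF g \circ \FF f$.

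There is no serious obstacle here: the substantive content (well-definedness of $\FF f$ as a group homomorphism and the surjectivity of its restriction to generators) has already been absorbed into Lemma~\ref{Lem:surj_F_def_obj} and Lemma~\ref{Lem:surj_F_def_mor}. What remains is purely formal and consists of two one-line evaluations on generators followed by an appeal to the universal property that a group homomorphism is determined by its values on a generating set.
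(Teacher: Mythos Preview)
Your proof is correct and follows essentially the same approach as the paper: both verify the two functor axioms directly, with the paper simply asserting them as ``obvious'' and ``by the definition of $\FF$'', while you spell out the generator-level check that justifies these assertions. There is nothing to correct.
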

		\begin{proof}
			It is obvious that $\FF\id_{Q} = \id_{\FF Q}$ for each object $Q$ of $\Qsurj$. 
			Let $f_1 :Q_1 \to Q_2$ and $f_2 : Q_2 \to Q_3$ be morphisms of $\Qsurj$.  
			By the definition of $\FF$, we have $\FF(f_2\circ f_1) = \FF f_2\circ \FF f_1$. 
		\end{proof}

		\subsubsection{A functor from $\Dsurj$ to $\Qsurj$}\label{subsubsection:surj_functor_left}
			We construct a functor 
			$$\GG_{\mathrm{surj}} : \Qsurj \leftarrow \Dsurj$$
			 for objects in Lemma~\ref{Lem:surj_G_def_obj}, and for morphisms in Lemma~\ref{Lem:surj_G_def_mor}. 
    			For the simplicity, we just use the symbol $\GG$ for $\GG_{\mathrm{surj}}$ throughout Section~\ref{section_surj}. 
			
		\begin{Lem}\label{Lem:surj_G_def_obj}
			Let $(G, \Omega )$ be an object of $\Dsurj$. Then $\GG(G, \Omega) := \Conj(\Omega )$ is an object of $\Qsurj$. 
		\end{Lem}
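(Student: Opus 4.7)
The plan is to unpack the definitions and invoke the two lemmas already established in the preliminaries. Given an object $(G,\Omega)$ of $\Dsurj$, I need to check two things about $\Conj(\Omega)$: first that it genuinely is a quandle (equivalently, a subquandle of $\Conj(G)$), and second that it is faithful in the sense of Section~\ref{subsection:notationquandle}.

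For the first point, I would observe that since $(G,\Omega)$ lies in $\Dsurj \subset \D$, the generator $\Omega$ is conjugation-stable, i.e.~$g\Omega g^{-1} \subset \Omega$ for every $g \in G$. Applied to $g$ and $g^{-1}$ this yields $g\Omega g^{-1} = \Omega$, so $\Omega$ is a union of conjugacy classes of $G$. By the definition of $\Conj_G(\Omega)$ recalled in Section~\ref{subsection:notationquandle}, this exactly makes $\Conj(\Omega) := \Conj_G(\Omega)$ a subquandle of $\Conj(G)$, hence a quandle in its own right.

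For the second point, I would recall that $\Obj(\Dsurj) = \Obj(\Dgcf)$, so by Definition~\ref{Def:Dgcf} the conjugation action $G \curvearrowright \Omega$ is faithful. As noted just after that definition, the faithfulness of this action is equivalent to the centralizer of $\Omega$ in $G$ being trivial. Lemma~\ref{Lem:Conjfaithful} then immediately yields that $\Conj_G(\Omega)$ is a faithful quandle, placing $\GG(G,\Omega)$ in $\Obj(\Qsurj)$.

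There is essentially no obstacle here; the work is entirely definitional bookkeeping, and the only thing to be careful about is making clear that the hypothesis ``$\Omega$ generates $G$'' together with conjugation-stability of $\Omega$ is exactly what is needed to reduce $g\Omega g^{-1}\subset\Omega$ to equality, and that the faithfulness condition in $\Dgcf$ matches the centralizer hypothesis of Lemma~\ref{Lem:Conjfaithful}.
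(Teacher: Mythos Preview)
Your argument is correct and matches the paper's approach, which likewise reduces the claim to Lemma~\ref{Lem:Conjfaithful}; the paper simply omits your first paragraph, taking the quandle structure on $\Conj(\Omega)$ as immediate from conjugation-stability. One minor quibble with your closing commentary: the hypothesis that $\Omega$ generates $G$ is not what gives $g\Omega g^{-1}=\Omega$---that equality follows from conjugation-stability applied to both $g$ and $g^{-1}$, exactly as you argued, and in fact only the inclusion is needed for $\Omega$ to be a union of conjugacy classes.
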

		\begin{proof}
			It is enough to show that $\Conj(\Omega)$ is faithful. 
			This follows from Lemma~\ref{Lem:Conjfaithful}. 
		\end{proof}
		
		\begin{Lem}\label{Lem:surj_G_def_mor}
			Let $\varphi :(G_1, \Omega _1) \to (G_2, \Omega _2)$ be a morphism of $\Dsurj$.  
			Then the following $\GG\varphi$ is a morphism of $\Qsurj$: 
			$$\GG\varphi: \Conj(\Omega _1)\to \Conj(\Omega _2): \omega_1\mapsto \varphi(\omega_1). $$
		\end{Lem}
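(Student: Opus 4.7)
The plan is to verify three conditions corresponding to the three pieces of data packaged into a morphism of $\Qsurj$: that $\GG\varphi$ takes values in $\Conj(\Omega_2)$, that it respects the quandle structure, and that it is surjective. All three follow by unpacking the definition of a morphism in $\Dsurj$.

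\textbf{Well-definedness.} Since $\varphi : (G_1,\Omega_1)\to (G_2,\Omega_2)$ is in particular a morphism of $\Dg$, it satisfies $\varphi(\Omega_1)\subset \Omega_2$. Thus for every $\omega_1\in\Omega_1$ the element $\GG\varphi(\omega_1)=\varphi(\omega_1)$ lies in $\Omega_2$, so $\GG\varphi$ is a genuine map $\Conj(\Omega_1)\to\Conj(\Omega_2)$.

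\textbf{Quandle homomorphism property.} For $\omega_1,\omega_1'\in\Omega_1$, the conjugation quandle structure gives $s_{\omega_1}(\omega_1')=\omega_1\omega_1'\omega_1^{-1}$, and likewise in $\Conj(\Omega_2)$. Because $\varphi$ is a group homomorphism,
\[
\GG\varphi\bigl(s_{\omega_1}(\omega_1')\bigr)=\varphi(\omega_1\omega_1'\omega_1^{-1})=\varphi(\omega_1)\varphi(\omega_1')\varphi(\omega_1)^{-1}=s_{\GG\varphi(\omega_1)}\bigl(\GG\varphi(\omega_1')\bigr),
\]
which is precisely the condition that $\GG\varphi$ be a quandle homomorphism.

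\textbf{Surjectivity.} By the definition of $\Dsurj$, the restriction $\varphi|_{\Omega_1}:\Omega_1\to\Omega_2$ is surjective. Since this restriction is literally the underlying map of $\GG\varphi$, surjectivity follows at once.

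There is essentially no obstacle here: each clause of the definition of a morphism of $\Dsurj$ was designed to supply exactly one of the required properties of a morphism of $\Qsurj$, so the proof consists entirely of these direct translations. The only point worth flagging is that, although the map $\GG\varphi$ is purely set-theoretic, the quandle condition genuinely uses the full group structure of $\varphi$ (not just its behavior on $\Omega_1$), which is consistent with why the earlier $\Qsurj \to \Dsurj$ functor had to keep track of the ambient group, not just the generator.
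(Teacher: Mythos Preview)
Your proof is correct and follows essentially the same approach as the paper's own proof: the paper simply observes that surjectivity of $\GG\varphi$ follows from surjectivity of $\varphi|_{\Omega_1}$, and that the quandle homomorphism property follows from $\varphi$ being a group homomorphism between the ambient groups. Your version is just a more explicit unpacking of the same two points, together with a separate well-definedness check that the paper leaves implicit.
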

		\begin{proof}
			Since $\varphi|_{\Omega_1} : \Omega_1 \to \Omega_2$ is surjective, $\GG\varphi : \Conj(\Omega_1) \to\Conj(\Omega_2)$ is surjective.  
			Since $\varphi$ is a group homomorphism and $\Conj(\Omega_1)$ and $\Conj(\Omega_2)$ are conjugation quandles, $\GG\varphi$ is a quandle homomorphism. 
		\end{proof}
		
		\begin{Prop}
			The above $\GG:\Dsurj \to \Qsurj$ is a functor. 
		\end{Prop}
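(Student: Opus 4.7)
The plan is to verify the two functor axioms directly, using the fact that on morphisms $\GG$ is given by the underlying set-theoretic map of the group homomorphism restricted to the generator. Lemmas~\ref{Lem:surj_G_def_obj} and \ref{Lem:surj_G_def_mor} already supply the well-definedness of $\GG$ on objects and on morphisms, so nothing further needs to be checked on that front.

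For preservation of identities, I would take an object $(G,\Omega)$ of $\Dsurj$ and observe that $\GG \id_{(G,\Omega)} : \Conj(\Omega) \to \Conj(\Omega)$ is by definition the map $\omega \mapsto \id_{G}(\omega) = \omega$, which is exactly $\id_{\Conj(\Omega)} = \id_{\GG(G,\Omega)}$.

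For preservation of composition, given morphisms $\varphi_1 : (G_1, \Omega_1) \to (G_2, \Omega_2)$ and $\varphi_2 : (G_2, \Omega_2) \to (G_3, \Omega_3)$ of $\Dsurj$, I would note that the composite $\varphi_2 \circ \varphi_1$ is the morphism of $\Dsurj$ whose underlying group homomorphism is the set-theoretic composite. Applied to any $\omega_1 \in \Omega_1$, the quandle map $\GG(\varphi_2 \circ \varphi_1)$ sends $\omega_1$ to $(\varphi_2 \circ \varphi_1)(\omega_1) = \varphi_2(\varphi_1(\omega_1))$, which coincides with $(\GG\varphi_2 \circ \GG\varphi_1)(\omega_1)$. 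Since the underlying sets of $\GG(\varphi_2 \circ \varphi_1)$ and $\GG\varphi_2 \circ \GG\varphi_1$ agree pointwise on $\Conj(\Omega_1) = \Omega_1$, the two quandle homomorphisms are equal.

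There is no real obstacle here: both axioms reduce to the fact that $\GG$ is essentially the forgetful operation of restricting $\varphi : G_1 \to G_2$ to $\Omega_1 \to \Omega_2$, and set-theoretic restriction commutes trivially with composition and identity. The only content in the construction is packaged in the preceding two lemmas, which guarantee that the restricted maps actually land in the right quandles and are surjective quandle homomorphisms.
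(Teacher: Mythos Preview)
Your proof is correct and follows essentially the same approach as the paper's own proof: both verify the two functor axioms directly by noting that $\GG$ on morphisms is just restriction of the group homomorphism to the generator, so preservation of identities and compositions is immediate. Your version simply spells out the pointwise verification that the paper summarizes as ``obvious'' and ``by the definition of $\GG$.''
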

		\begin{proof}
			It is obvious that $\GG\id_{(G, \Omega)} = \id_{\GG(G, \Omega)}$ for each object $(G, \Omega)$ of $\Dsurj$. 
			Let $\varphi_1 : (G_1, \Omega_1) \to (G_2, \Omega_2)$ and $\varphi_2 : (G_2, \Omega_2) \to (G_3, \Omega_3)$ be morphisms of $\Dsurj$. 
			By the definition of $\GG$, we have $\GG(\varphi_2\circ \varphi_1) = \GG \varphi_2\circ \GG\varphi_1$. 
		\end{proof}

	\subsection{A category equivalence between $\Qsurj$ and $\Dsurj$}\label{subsection:equivalence_surj}
		In this subsection, we show that $\FF$ and $\GG$ give a category equivalence between $\Qsurj$ and $\Dsurj$, where $\FF$ and $\GG$ are defined in Sections~\ref{subsubsection:surj_functor_right} and \ref{subsubsection:surj_functor_left}. 
		
		First we prove that there exists a natural isomorphism $\theta : \GG\FF \Rightarrow \id_{\Qsurj}$. 
		
		\begin{Prop}\label{Prop:ntfGFtoid_Qsurj}
			The following $\theta$ is a natural isomorphism from $\GG\FF$ to $\id_{\Qsurj}$: 
			$$\theta =  \{\theta_{Q} : \GG\FF Q \to Q : s_{x}\mapsto x\}_{Q\in \Obj(\Qsurj)} : \GG\FF \Rightarrow \id_{\Qsurj}. $$
		\end{Prop}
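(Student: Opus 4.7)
The plan is to verify, in order, that each component $\theta_Q$ is well-defined, that it is a quandle isomorphism lying in $\Qsurj$, and that the naturality square commutes. Essentially everything reduces to axiom (Q3) combined with the faithfulness of $Q$.

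First I would unpack the source. By Lemma \ref{Lem:surj_F_def_obj}, $\FF Q = (\Inn Q, s(Q))$, and so $\GG \FF Q = \Conj(s(Q))$, whose underlying set is exactly $s(Q) = \{s_x : x \in Q\}$. The assignment $s_x \mapsto x$ is well-defined precisely because $Q$ is faithful: $s_x = s_y$ forces $x = y$. This is the single essential use of the faithfulness hypothesis, and without it $\theta_Q$ would not even be a function.

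Next I would verify that $\theta_Q$ is a quandle homomorphism. For any $x, y \in Q$, axiom (Q3) gives $s_x s_y s_x^{-1} = s_{s_x(y)}$ in $\Inn Q$, so the symmetry on the conjugation quandle acts by $s^{\Conj(s(Q))}_{s_x}(s_y) = s_{s_x(y)}$. Applying $\theta_Q$ yields $s_x(y) = s^Q_{\theta_Q(s_x)}(\theta_Q(s_y))$, which is the quandle homomorphism condition. Surjectivity of $\theta_Q$ is clear from the formula, and injectivity is again faithfulness; hence $\theta_Q$ is a bijection. The inverse $x \mapsto s_x$ is a quandle homomorphism by the same (Q3)-computation run in reverse, so $\theta_Q$ is an isomorphism in $\mathbf{Q}$, and being a bijective homomorphism between faithful quandles (the target $\Conj(s(Q))$ is faithful by Lemma \ref{Lem:surj_G_def_obj}), it is a morphism—indeed an isomorphism—of $\Qsurj$.

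Finally the naturality square: for a morphism $f : Q_1 \to Q_2$ of $\Qsurj$ and any $s_{x_1} \in s(Q_1)$, the definitions of $\FF f$ and $\GG \varphi$ on symmetries (Lemmas \ref{Lem:surj_F_def_mor} and \ref{Lem:surj_G_def_mor}) give
\[
\theta_{Q_2}(\GG\FF f(s_{x_1})) = \theta_{Q_2}(s_{f(x_1)}) = f(x_1) = f(\theta_{Q_1}(s_{x_1})),
\]
so the square commutes pointwise. There is no serious obstacle; the content of the proposition is entirely captured by the observation that faithfulness turns $s_x \mapsto x$ into a well-defined bijection and that axiom (Q3) translates the quandle structure on $\Conj(s(Q))$ back into the original one on $Q$.
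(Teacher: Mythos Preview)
Your proof is correct and follows essentially the same route as the paper's: the paper simply asserts that faithfulness makes $\theta_Q$ a well-defined isomorphism and then checks naturality by the identical three-line computation, whereas you spell out the (Q3)-verification that the paper leaves implicit. One tiny slip: $\Conj(s(Q))$ is the \emph{source} of $\theta_Q$, not the target, though this does not affect the argument since both quandles are faithful.
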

		\begin{proof}
			Recall that each object $Q$ of $\Qsurj$ is faithful. Then one can easily see that $\theta_{Q}$ is well-defined and becomes an isomorphism of $\Qsurj$. 
			Take any morphism $f : Q_1 \to Q_2$ of $\Qsurj$. 
			It is enough to show that the following diagram commutes: 
			\[
			\begin{tikzpicture}[auto]
				\node (Q_1) at (0, 0) {$Q_1$}; 
				\node (Q_2) at (0, -2) {$Q_2$};  
				\node (GFQ_1) at (2, 0) {$\GG\FF Q_1$};
				\node (GFQ_2) at (2, -2) {$\GG\FF Q_2$};
				\node (Q_10) at (4, 0) {$Q_1$};
				\node (Q_20) at (4, -2) {$Q_2$}; 
				
				\draw[->] (Q_1) to node {$\scriptstyle f$} (Q_2);
				\draw[->] (Q_10) to node {$\scriptstyle f$} (Q_20);
				\draw[->] (GFQ_1) to node {$\scriptstyle \theta_{Q_1}$} (Q_10);
				\draw[->] (GFQ_2) to node [swap]{$\scriptstyle \theta_{Q_2}$} (Q_20);
				\draw[->] (GFQ_1) to node [swap]{$\scriptstyle \GG\FF f$} (GFQ_2);
			\end{tikzpicture}
			\]
			Take any $x_1\in Q_1$. Then we have 
			\begin{align*}
				(\theta_{Q_2}\circ (\GG\FF f)) (s_{x_1})&=\theta_{Q_2}(s_{f(x_1)})\\
					&=f(x_1)\\
					&=(f\circ \theta_{Q_1})(s_{x_1}). 
			\end{align*}
			The proof is completed. 
		\end{proof}
		
		Next we show that there exists a natural isomorphism $\eta : \FF\GG \Rightarrow \id_{\Dsurj}$. 
		
		\begin{Prop}\label{Prop:ntfFGtoid_Dsurj}
			The following $\eta :\FF\GG \Rightarrow \id_{\Dsurj}$ is a natural isomorphism from $\FF\GG$ to $\id_{\Dsurj}$: 
			$$\eta = \{\eta_{(G, \Omega)} : \FF\GG(G, \Omega) \to (G, \Omega) : s_{\omega}\mapsto \omega\}_{(G, \Omega)\in \Obj(\Dsurj)}, $$
			where, for each $(G, \Omega)$, $\eta_{(G, \Omega)}$ is the inverse of the isomorphism $\varphi_{(G, \Omega)} : (G, \Omega) \to \FF\GG(G, \Omega)$ 
			in $\Dsurj$ defined in Lemma~\ref{Prop:FGtoid_Disomvarphi}. 
			Here, we remark that $\FF\GG(G, \Omega) = (\Inn(\Conj(\Omega)), s(\Conj(\Omega)))$ for each $(G, \Omega) \in \Obj(\Dsurj)$. 
		\end{Prop}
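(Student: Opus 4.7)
The plan is to invoke Proposition~\ref{Prop:FGtoid_Disomvarphi} to conclude that each component $\eta_{(G, \Omega)}$ is already an isomorphism in $\Dsurj$, and then to verify naturality by a direct computation on generators.

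First I would observe that Proposition~\ref{Prop:FGtoid_Disomvarphi} provides, for each $(G, \Omega) \in \Obj(\Dsurj)$, an isomorphism $\varphi_{(G, \Omega)} : (G, \Omega) \to \FF\GG(G, \Omega) = (\Inn(\Conj(\Omega)), s(\Conj(\Omega)))$ in $\Dsurj$ sending $\omega \mapsto s_{\omega}$ for $\omega \in \Omega$. Its inverse $\eta_{(G, \Omega)}$ is therefore an isomorphism of $\Dsurj$ and is characterized by $s_{\omega} \mapsto \omega$ on the generating set $s(\Omega) = s(\GG(G, \Omega))$. This is exactly what is claimed for the components, so well-definedness is already taken care of.

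Next I would check the naturality square: for every morphism $\varphi : (G_1, \Omega_1) \to (G_2, \Omega_2)$ of $\Dsurj$, the diagram
\[
\begin{tikzpicture}[auto]
    \node (A) at (0, 0) {$\FF\GG(G_1, \Omega_1)$};
    \node (B) at (4, 0) {$(G_1, \Omega_1)$};
    \node (C) at (0, -2) {$\FF\GG(G_2, \Omega_2)$};
    \node (D) at (4, -2) {$(G_2, \Omega_2)$};
    \draw[->] (A) to node {$\scriptstyle \eta_{(G_1, \Omega_1)}$} (B);
    \draw[->] (C) to node [swap]{$\scriptstyle \eta_{(G_2, \Omega_2)}$} (D);
    \draw[->] (A) to node [swap]{$\scriptstyle \FF\GG\varphi$} (C);
    \draw[->] (B) to node {$\scriptstyle \varphi$} (D);
\end{tikzpicture}
\]
should commute. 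Both $\varphi \circ \eta_{(G_1, \Omega_1)}$ and $\eta_{(G_2, \Omega_2)} \circ \FF\GG\varphi$ are group homomorphisms with common domain $\Inn(\Conj(\Omega_1))$, which is generated by $s(\Omega_1)$. Thus it suffices to compare both sides on an arbitrary $s_{\omega}$ with $\omega \in \Omega_1$. Unpacking the definitions of $\GG$ (acts by $\omega \mapsto \varphi(\omega)$) and of $\FF$ (acts by $s_{x} \mapsto s_{f(x)}$ on generators), one gets $\FF\GG\varphi(s_{\omega}) = s_{\varphi(\omega)}$, and hence both paths send $s_{\omega}$ to $\varphi(\omega)$. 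Naturality then follows.

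The main ``obstacle'', such as it is, is just bookkeeping: one must make sure that $\FF\GG\varphi$ is being interpreted on generators of $\Inn(\Conj(\Omega_1))$ before applying $\eta_{(G_2, \Omega_2)}$, so that the identification $s_\omega \mapsto \omega$ can be used. Since both composites are group homomorphisms and agree on a generating set, they agree everywhere, completing the proof.
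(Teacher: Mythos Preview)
Your proposal is correct and follows essentially the same route as the paper: the paper also reduces to checking the naturality square on an arbitrary generator $s_{\omega_1}$, computes $(\eta_{(G_2,\Omega_2)}\circ\FF\GG\varphi)(s_{\omega_1}) = \varphi(\omega_1) = (\varphi\circ\eta_{(G_1,\Omega_1)})(s_{\omega_1})$, and then concludes using that $\Inn(\GG(G_1,\Omega_1))$ is generated by $s(\GG(G_1,\Omega_1))$. Your write-up is slightly more explicit about why agreement on generators suffices, but the argument is the same.
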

		\begin{proof}
			Take any morphism $\varphi : (G_1, \Omega_1) \to (G_2, \Omega_2)$ of $\Dsurj$. 
			It is enough to show that the following diagram commutes: 
			\[
			\begin{tikzpicture}[auto]
				\node (Q_1) at (0, 0) {$(G_1, \Omega_1)$}; 
				\node (Q_2) at (0, -2.5) {$(G_2, \Omega_2)$};  
				\node (GFQ_1) at (3, 0) {$\FF\GG(G_1, \Omega_1)$};
				\node (GFQ_2) at (3, -2.5) {$\FF\GG(G_2, \Omega_2)$};
				\node (Q_10) at (6, 0) {$(G_1, \Omega_1)$};
				\node (Q_20) at (6, -2.5) {$(G_2, \Omega_2)$}; 
				
				\draw[->] (Q_1) to node {$\scriptstyle \varphi$} (Q_2);
				\draw[->] (Q_10) to node {$\scriptstyle \varphi$} (Q_20);
				\draw[->] (GFQ_1) to node {$\scriptstyle \eta_{(G_1, \Omega_1)}$} (Q_10);
				\draw[->] (GFQ_2) to node [swap]{$\scriptstyle \eta_{(G_2, \Omega_2)}$} (Q_20);
				\draw[->] (GFQ_1) to node [swap]{$\scriptstyle \FF\GG\varphi $} (GFQ_2);
			\end{tikzpicture}
			\]

			Take any $\omega_1\in\Omega_1$. 
			Then 
			\begin{align*}
				(\eta_{(G_2, \Omega_2)}\circ\FF\GG\varphi) (s_{\omega_1}) 
				&= \eta_{(G_2, \Omega_2)}(s_{\varphi(\omega_1)})\\
				&= \varphi(\omega_1)\\
				&= (\varphi\circ \eta_{(G_1, \Omega_1)} ) (s_{\omega_1}). 
			\end{align*}
			Since $\Inn(\GG(G_1, \Omega_1))$ is generated by $s(\GG(G_1, \Omega_1))$, the proof is completed. 
		\end{proof}

		The following theorem follows from Propositions~\ref{Prop:ntfGFtoid_Qsurj} and \ref{Prop:ntfFGtoid_Dsurj}. 
	 
		\begin{Thm}
			The above $(\FF, \GG, \theta, \eta)$ gives a category equivalence between $\Qsurj$ and $\Dsurj$. 
		\end{Thm}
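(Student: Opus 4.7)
The plan is to verify the definition of category equivalence from Section~\ref{section_preliminaries} by assembling the pieces already established. That definition asks for a pair of functors $C \xtofrom[\GG]{\FF} D$ together with natural isomorphisms $\id_{C} \Rightarrow \GG\FF$ and $\FF\GG \Rightarrow \id_{D}$.

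First, I would recall that $\FF:\Qsurj\to\Dsurj$ and $\GG:\Dsurj\to\Qsurj$ have already been shown to be functors in Section~\ref{subsection:functor_surj}. Next, Proposition~\ref{Prop:ntfFGtoid_Dsurj} directly supplies the natural isomorphism $\eta:\FF\GG\Rightarrow\id_{\Dsurj}$ in the required direction. Proposition~\ref{Prop:ntfGFtoid_Qsurj} supplies $\theta:\GG\FF\Rightarrow\id_{\Qsurj}$, which points the opposite way from the formal requirement of the definition.

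The only bookkeeping step is to pass to componentwise inverses: since each $\theta_{Q}$ is an isomorphism in $\Qsurj$, the family $\theta^{-1}:=\{\theta_{Q}^{-1}\}_{Q\in\Obj(\Qsurj)}$ is well defined, and its naturality follows at once from the naturality of $\theta$ together with $\theta_{Q}\circ\theta_{Q}^{-1}=\id$. This yields a natural isomorphism $\id_{\Qsurj}\Rightarrow\GG\FF$, which together with $\eta$ and the functors $\FF$, $\GG$ matches the definition of a category equivalence verbatim.

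There is no genuine obstacle here: once Propositions~\ref{Prop:ntfGFtoid_Qsurj} and \ref{Prop:ntfFGtoid_Dsurj} are in hand, the theorem reduces to matching notation and orientation with the definition in Section~\ref{section_preliminaries}. The proof itself can therefore be written in a single short sentence citing the two preceding propositions.
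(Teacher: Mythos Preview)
Your proposal is correct and matches the paper's approach exactly: the paper's proof is a single line citing Propositions~\ref{Prop:ntfGFtoid_Qsurj} and \ref{Prop:ntfFGtoid_Dsurj}. Your additional remark about inverting $\theta$ to align with the orientation in the formal definition is a valid point of care that the paper itself glosses over, but it does not change the substance of the argument.
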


\section{Categories with injective homomorphisms}\label{section_inj}
	In this section, we define a category $\Dinj$, and prove that $\Qinj$ and $\Dinj$ are equivalent as categories. 
	
	\subsection{Definition of a category of groups with generators}\label{section:def_Dinj01}
		Let us recall that the following three categories are introduced in Section~\ref{section:Dsurj}: 
		\begin{itemize}
			\item  $\D$ : the category of groups with conjugation-stable generators, whose morphisms are group homomorphisms inducing maps between fixed generators.
			\item  $\Db$ : the category of groups with conjugation-stable generators, whose morphisms are surjective group homomorphisms inducing bijective maps between fixed generators.
			\item  $\Dgcf$ : the category of groups with conjugation-stable faithful generators, whose morphisms are group homomorphisms inducing maps between fixed generators.
		\end{itemize}
		In this subsection, we define a category $\Dinj$ in terms of the three categories above. 
		
		\begin{Def}[$\Dinj$]
			We define a category $\Dinj$ as follows. 
			We denote $\Dinj$ briefly by $D$ in this subsection. 
			Let us put $\Obj(D) := \Obj(\Dgcf)$. 
			For objects $(G_1, \Omega_1), (G_2, \Omega_2)\in \Obj(D)$, we define the set of morphisms $\Hom_{D}((G_1, \Omega_1), (G_2, \Omega_2))$ from $(G_1, \Omega_1)$ to $(G_2, \Omega_2)$ in $D$ as follows. 
			\begin{align*}
				&\Hom_{D}((G_1, \Omega_1), (G_2, \Omega_2))\\
				&:=\left\{
				((H,\Gamma), \pi) \; \left|\;
				\begin{gathered}
					H : \text{a subgroup of } G_2,\\
					\Gamma : \text{a subset of } \Omega_2, \\
					(H, \Gamma) \in \Obj(\D), \\
					\pi : (H, \Gamma)\to (G_1, \Omega_1) : \text{a morphism in } \Db
				\end{gathered} 
				\right.
				\right\}. 
			\end{align*}
			
			We remark that each morphism is an opposite directional partial map, 
			and a diagram of a morphism can be written as Figure~\ref{figure:morofDinj}. 
		\begin{figure}[htbp]
		\[
			\begin{tikzpicture}[auto]
				\node (G_1) at (0, 1.5) {$(G_1, \Omega _1)$}; 
				\node (G_2) at (3, 1.5) {$(G_2, \Omega _2)$};
				\node (H) at (3, 0.5) {$(H, \Gamma)$};  
				\node (subset) at (3, 1) {$\rotatebox{90}{$\subset$}$};  
				
				\draw[->, dashed] (G_1) to node {$\scriptstyle \Phi$} (G_2);
				\draw[->>] (H) to node {$\scriptstyle \pi$} (G_1);
			\end{tikzpicture}
		\]
		\caption{$\Phi=((H, \Gamma), \pi)\in \Hom_{D}((G_1, \Omega _1), (G_2, \Omega _2))$. }
		\label{figure:morofDinj}
		\end{figure}
			
			In Section~\ref{subsection:comp_of_Dinj}, we define composition of morphisms in $D$ and prove that $D$ becomes a category. 
		\end{Def}
		
		\begin{Rem}
			For a morphism $((H, \Gamma), \pi)$ of $\Dinj$, the group homomorphism $\pi$ is not needed to be injective on $H$. 
			Actually, the following gives an example of non injective $\pi$:
			Let us denote by $\GS_n$ the symmetric group of degree $n$ for each $n$. Take a conjugation-stable faithful generator $t_3$ of $\GS_3$ as the set $t_3=\{(12), (13), (23)\}$ of transpositions. 
			For objects $(\GS_3, t_3)$ and $(\GS_6, \GS_6)$ of $\Dinj$, 
			we define a morphism $((H, \Gamma), \pi)$ by 
			$$H=\GS_3\times \{\id, (456), (465)\}, \Gamma=t_3\times\{(456)\}, $$
			$$\pi : \GS_3\times \{\id, (456), (465)\} \to \GS_3 : (g, c) \mapsto g, $$
			from $(\GS_3, t_3)$ to $(\GS_6, \GS_6)$ of $\Dinj$. Then $\pi$ is not injective.  
		\end{Rem}

	\subsection{On composition of morphisms in $\Dinj$}\label{subsection:comp_of_Dinj}
		Let us give a definition of composition of morphisms in $\Dinj$ by the following proposition. 
		
		\begin{Prop}[composition in $\Dinj$]\label{Prop:comp_Dinj}
			Let $\Phi_1=((H_2, \Gamma_2), \pi_2) : (G_1, \Omega _1) \to (G_2, \Omega _2)$ and $\Phi_2=((H_3, \Gamma_3), \pi_3) : (G_2, \Omega _2) \to (G_3, \Omega _3)$ be morphisms of $\Dinj$. 
			Then $\Phi_2\circ \Phi_1 := ((\i<\pi_3|_{\Gamma_3}^{-1}(\Gamma_2)>, \pi_3|_{\Gamma_3}^{-1}(\Gamma_2)), \pi_2\circ \pi_3|_{\i<\pi_3|_{\Gamma_3}^{-1}(\Gamma_2)>})$ is a morphism from $(G_1, \Omega _1)$ to $(G_3, \Omega _3)$ of $\Dinj$. 
		The diagram of $\Phi_2\circ \Phi_1$ can be written as Figure~\ref{figure:comp}. 
	\begin{figure}[htbp]
		\[
			\begin{tikzpicture}[auto]
				\node (G_1) at (0, 1.5) {$(G_1, \Omega _1)$}; 
				\node (G_2) at (4, 1.5) {$(G_2, \Omega _2)$};
				\node (H_2) at (4, 0.5) {$(H_2, \Gamma_2)$};  
				\node (subset_1) at (4, 1) {$\rotatebox{90}{$\subset$}$}; 
				\node (G_3) at (8, 1.5) {$(G_3, \Omega _3)$};
				\node (H_3) at (8, 0.5) {$(H_3, \Gamma_3)$};  
				\node (subset_2) at (8, 1) {$\rotatebox{90}{$\subset$}$};  
				\node (H) at (8, -.5) {$(\i<\pi_3|_{\Gamma_3}^{-1}(\Gamma_2)>, \pi_3|_{\Gamma_3}^{-1}(\Gamma_2))$};  
				\node (subset_2) at (8, 0) {$\rotatebox{90}{$\subset$}$};  
				
				\draw[->, dashed] (G_1) to node {$\scriptstyle \Phi_1$} (G_2);
				\draw[->>] (H_2) to node {$\scriptstyle \pi_2$} (G_1);
				\draw[->, dashed] (G_2) to node {$\scriptstyle \Phi_2$} (G_3);
				\draw[->>] (H_3) to node {$\scriptstyle \pi_3$} (G_2);
				\draw[->>] (H) to node {$\scriptstyle \pi_3|_{\i<\pi_3|_{\Gamma_3}^{-1}(\Gamma_2)>}$} (H_2);
			\end{tikzpicture}
		\]
		\caption{Composition in $\Dinj$. }\label{figure:comp}
		\end{figure}
		\end{Prop}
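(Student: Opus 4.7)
The plan is to check that the data $((\langle \pi_3|_{\Gamma_3}^{-1}(\Gamma_2)\rangle, \pi_3|_{\Gamma_3}^{-1}(\Gamma_2)), \pi_2 \circ \pi_3|_{\langle \pi_3|_{\Gamma_3}^{-1}(\Gamma_2)\rangle})$ satisfies every clause in the definition of a morphism in $\Hom_{\Dinj}((G_1,\Omega_1),(G_3,\Omega_3))$. Writing $\Gamma := \pi_3|_{\Gamma_3}^{-1}(\Gamma_2)$ and $H := \langle \Gamma \rangle$, three things need verifying: (i) $H \leq G_3$ and $\Gamma \subset \Omega_3$; (ii) $(H,\Gamma) \in \Obj(\D)$; and (iii) $\pi_2 \circ \pi_3|_H$ is a morphism of $\Db$ from $(H,\Gamma)$ to $(G_1,\Omega_1)$. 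Part (i) is immediate from the chain $\Gamma \subset \Gamma_3 \subset \Omega_3 \subset G_3$, and the generator half of (ii) holds by construction. The genuine content is the conjugation-stability required by (ii) and the surjectivity/bijectivity conditions required by (iii).

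For conjugation-stability, I would take $h \in H$ and $\gamma \in \Gamma$. Since $H \subset H_3$ and $\Gamma \subset \Gamma_3$, the conjugation-stability of $\Gamma_3$ inside $H_3$ (from $(H_3,\Gamma_3) \in \Obj(\D)$) gives $h\gamma h^{-1} \in \Gamma_3$. To upgrade this to $h\gamma h^{-1} \in \Gamma$ it suffices to verify $\pi_3(h\gamma h^{-1}) \in \Gamma_2$; since $\pi_3$ is a group homomorphism, this equals $\pi_3(h)\pi_3(\gamma)\pi_3(h)^{-1}$. Here $\pi_3(\gamma) \in \Gamma_2$ by the very definition of $\Gamma$, and $\pi_3(h) \in \langle \pi_3(\Gamma)\rangle \subset \langle \Gamma_2\rangle = H_2$ because $\Gamma_2$ generates $H_2$. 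Conjugation-stability of $\Gamma_2$ inside $H_2$ then delivers $\pi_3(h)\pi_3(\gamma)\pi_3(h)^{-1} \in \Gamma_2$, completing the argument for (ii).

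For (iii), I would first show that $\pi_3$ itself restricts to a morphism $(H,\Gamma) \to (H_2,\Gamma_2)$ of $\Db$. Since $\pi_3|_{\Gamma_3} : \Gamma_3 \to \Omega_2$ is a bijection (because $\pi_3$ is a $\Db$-morphism) and $\Gamma_2 \subset \Omega_2$, its further restriction $\pi_3|_\Gamma : \Gamma \to \Gamma_2$ is a bijection, so $\pi_3(H) = \langle \pi_3(\Gamma)\rangle = \langle \Gamma_2\rangle = H_2$ and $\pi_3|_H : H \to H_2$ is surjective. Composing with the given $\Db$-morphism $\pi_2 : (H_2,\Gamma_2) \to (G_1,\Omega_1)$ then yields the claimed morphism, granted the routine observation that $\Db$ is closed under composition (a composition of surjective group homomorphisms whose restrictions to the chosen generators are bijections retains both properties).

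The main obstacle I expect is the conjugation-stability step in (ii). The definition of $\Gamma$ as $\pi_3|_{\Gamma_3}^{-1}(\Gamma_2)$ is engineered precisely so that both the conjugation-stability of $\Gamma_3$ inside $H_3$ and that of $\Gamma_2$ inside $H_2$ can be invoked simultaneously through the homomorphism $\pi_3$; one must be careful to land back in $\Gamma$ itself, not merely in $\Gamma_3$ or in the preimage of $H_2$. Once this step is settled, part (iii) becomes a routine unwinding of the bijectivity of $\pi_3|_{\Gamma_3}$ on generators.
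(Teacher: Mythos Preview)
Your proposal is correct and follows essentially the same approach as the paper's proof: both verify conjugation-stability of $\Gamma$ in $H$ by pushing the conjugate through $\pi_3$ and invoking stability of $\Gamma_2$ in $H_2$, then obtain the $\Db$-condition by restricting the bijection $\pi_3|_{\Gamma_3}$ and composing with $\pi_2$. If anything, you are slightly more explicit than the paper in first noting $h\gamma h^{-1}\in\Gamma_3$ via the stability of $\Gamma_3$ in $H_3$ before applying $\pi_3$.
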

		\begin{proof}
			It is obvious that $\pi_3|_{\Gamma_3}^{-1}(\Gamma_2)$ generates $\i< \pi_3|_{\Gamma_3}^{-1}(\Gamma_2) >$. 
			Take any $h_3\in \i< \pi_3|_{\Gamma_3}^{-1}(\Gamma_2) >$, and $\gamma_3\in \pi_3|_{\Gamma_3}^{-1}(\Gamma_2)$. 
			Since $\pi_3(\gamma_3)\in \Gamma_2$, $\pi_3(h_3)\in \i<\Gamma_2> = H_2$ and $(H_2, \Gamma_2)$ is an object of $\D$, we have $\pi_3(h_3\gamma_3h_3^{-1}) = \pi_3(h_3)\pi_3(\gamma_3)\pi_3(h_3)^{-1}\in \Gamma_2$. 
			Thus $h_3\gamma_3h_3^{-1}\in \pi_3|_{\Gamma_3}^{-1}(\Gamma_2)$. 
			Hence $(\i<\pi_3|_{\Gamma_3}^{-1}(\Gamma_2)>, \pi_3|_{\Gamma_3}^{-1}(\Gamma_2))$ is an object of $\D$. 
			Since $\pi_3|_{\Gamma_3} :\Gamma_3 \to \Omega_2$ is bijective, $\pi_3|_{\pi_3|_{\Gamma_3}^{-1}(\Gamma_2)} : \pi_3|_{\Gamma_3}^{-1}(\Gamma_2) \to \Gamma_2$ is bijective. 
			Hence $\pi_2\circ\pi_3|_{\pi_3|_{\Gamma_3}^{-1}(\Gamma_2)} : \pi_3|_{\Gamma_3}^{-1}(\Gamma_2) \to \Omega_1$ is bijective. 
			So $\pi_2\circ\pi_3|_{\pi_3|_{\Gamma_3}^{-1}(\Gamma_2)}$ is a morphism of $\Db$. 
		\end{proof}

		\begin{Rem}
			The following diagram is a part of the above diagram in Proposition~\ref{Prop:comp_Dinj}: 
		\[
			\begin{tikzpicture}[auto]
				\node (G_2) at (4, 1.5) {$(G_2, \Omega _2)$};
				\node (H_2) at (4, 0.5) {$(H_2, \Gamma_2)$};  
				\node (subset_1) at (4, 1) {$\rotatebox{90}{$\subset$}$}; 
				\node (H_3) at (8, 0.5) {$(H_3, \Gamma_3)$}; 
				
				\draw[->>] (H_3) to node {$\scriptstyle \pi_3$} (G_2);
			\end{tikzpicture}
		\]
		
		The following diagram is pullback of the above diagram in $\D$: 
		\[
			\begin{tikzpicture}[auto]
				\node (G_2) at (4, 1.5) {$(G_2, \Omega _2)$};
				\node (H_2) at (4, 0.5) {$(H_2, \Gamma_2)$};  
				\node (subset_1) at (4, 1) {$\rotatebox{90}{$\subset$}$}; 
				\node (H_3) at (8, 0.5) {$(H_3, \Gamma_3)$};  
				\node (H) at (8, -.5) {$(\i<\pi_3|_{\Gamma_3}^{-1}(\Gamma_2)>, \pi_3|_{\Gamma_3}^{-1}(\Gamma_2))$};  
				\node (subset_2) at (8, 0) {$\rotatebox{90}{$\subset$}$};  
				
				\draw[->>] (H_3) to node {$\scriptstyle \pi_3$} (G_2);
				\draw[->>] (H) to node {$\scriptstyle \pi_3|_{\i<\pi_3|_{\Gamma_3}^{-1}(\Gamma_2)>}$} (H_2);
			\end{tikzpicture}
		\]
		Hence,  composition of morphisms in $\Dinj$ leads from pullback in $\D$. 
		\end{Rem}

		By Propositions~\ref{Prop:compass} and \ref{Prop:Dinj_id} stated below, 
		$\Dinj$ becomes a category with respect to the composition. 
		
		\begin{Prop}\label{Prop:compass}
			The above composition of morphisms in $\Dinj$ is associative. 
		\end{Prop}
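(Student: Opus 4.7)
The plan is to fix three composable morphisms
\[ \Phi_i = ((H_{i+1},\Gamma_{i+1}),\pi_{i+1}) : (G_i,\Omega_i)\to(G_{i+1},\Omega_{i+1}) \quad (i=1,2,3), \]
and to unfold both $(\Phi_3\circ\Phi_2)\circ\Phi_1$ and $\Phi_3\circ(\Phi_2\circ\Phi_1)$ using the composition rule of Proposition~\ref{Prop:comp_Dinj}. A triple-composed morphism $(G_1,\Omega_1)\to(G_4,\Omega_4)$ is recorded by a subset of $\Gamma_4$ (with accompanying group the subgroup of $G_4$ it generates, and accompanying map to $G_1$ forced by restriction of an iterated $\pi$), so associativity will reduce to showing that both parenthesizations produce the same distinguished subset of $\Gamma_4$ and then the same induced map.

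First I would unfold $\Phi_3\circ(\Phi_2\circ\Phi_1)$. The inner composition $\Phi_2\circ\Phi_1$ has distinguished set $A:=\pi_3|_{\Gamma_3}^{-1}(\Gamma_2)\subset\Gamma_3$ and accompanying map $\pi_2\circ\pi_3|_{\langle A\rangle}:\langle A\rangle\to G_1$; applying Proposition~\ref{Prop:comp_Dinj} once more gives distinguished set $\pi_4|_{\Gamma_4}^{-1}(A)$, which by direct unwinding equals
\[ \{\gamma\in\Gamma_4 : \pi_4(\gamma)\in\Gamma_3 \text{ and } \pi_3(\pi_4(\gamma))\in\Gamma_2\}. \]
Next I would unfold $(\Phi_3\circ\Phi_2)\circ\Phi_1$. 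The inner $\Phi_3\circ\Phi_2$ has distinguished set $B:=\pi_4|_{\Gamma_4}^{-1}(\Gamma_3)\subset\Gamma_4$ and accompanying map $\pi_3\circ\pi_4|_{\langle B\rangle}:\langle B\rangle\to G_2$; composing with $\Phi_1$ then selects precisely the $\gamma\in B$ with $\pi_3(\pi_4(\gamma))\in\Gamma_2$, which is the same subset of $\Gamma_4$ displayed above. Consequently the two subgroups of $G_4$ generated on the two sides coincide, and the induced map from this common subgroup to $G_1$ is in each case the restriction of $\pi_2\circ\pi_3\circ\pi_4$.

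I expect no deep obstacle: at its heart the computation is the set-theoretic identity $(f\circ g)^{-1}(S)=g^{-1}(f^{-1}(S))$ applied to the bijections $\pi_3|_{\Gamma_3}:\Gamma_3\to\Omega_2$ and $\pi_4|_{\Gamma_4}:\Gamma_4\to\Omega_3$ supplied by the $\Db$-structure of the morphisms. The main bookkeeping point is verifying at each intermediate stage that the relevant $\pi$'s really restrict to the predicted targets so that every iterated composition is well-typed; this is immediate since a morphism of $\Db$ carries its chosen generator bijectively onto the corresponding generator of its target, so $\pi_4$ maps $\langle B\rangle$ into $H_3$ and, on the common distinguished subset of $\Gamma_4$, into $\langle A\rangle$. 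Once these verifications are assembled, the two parenthesizations yield identical triples and hence equal morphisms of $\Dinj$.
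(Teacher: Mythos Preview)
Your proposal is correct and follows essentially the same route as the paper: both arguments unfold the two parenthesizations via Proposition~\ref{Prop:comp_Dinj}, reduce the question to an equality of the distinguished subsets of $\Gamma_4$, and observe that this equality is the set-theoretic identity $(\pi_3|_{\Gamma_3}\circ\pi_4|_{\Gamma_4})^{-1}(\Gamma_2)=\pi_4|_{\Gamma_4}^{-1}\bigl(\pi_3|_{\Gamma_3}^{-1}(\Gamma_2)\bigr)$. The paper records the two expansions and states this equality directly, while you spell out the element-level description and the well-typedness of the iterated restrictions a bit more explicitly, but there is no substantive difference.
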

		\begin{proof}
			Let $\Phi_1 = ((H_2, \Gamma_2), \pi_2) : (G_1, \Omega_1)\to (G_2, \Omega_2)$, 
				$\Phi_2 = ((H_3, \Gamma_3), \pi_3) : (G_2, \Omega_2)\to (G_3, \Omega_3)$ and 
				$\Phi_3 = ((H_4, \Gamma_4), \pi_4) : (G_3, \Omega_3)\to (G_4, \Omega_4)$ 
				be morphisms of $\Dinj$. 
			By the definition of composition in $\Dinj$, one has 
			\begin{align*}
				\Phi_3\circ (\Phi_2\circ \Phi_1)
					= &((\i<\pi_4|_{\Gamma_4}^{-1}(\pi_3|_{\Gamma_3}^{-1}(\Gamma_2))>, \pi_4|_{\Gamma_4}^{-1}(\pi_3|_{\Gamma_3}^{-1}(\Gamma_2))), \\
					&\hskip5pt\pi_2\circ \pi_3|_{\i<\pi_3|_{\Gamma_3}^{-1}(\Gamma_2)>}\circ \pi_4|_{\i<\pi_4|_{\Gamma_4}^{-1}(\pi_3|_{\Gamma_3}^{-1}(\Gamma_2))>}),\\
				(\Phi_3\circ \Phi_2)\circ \Phi_1 
					= &((\i<(\pi_3|_{\Gamma_3}\pi_4|_{\pi_4|_{\Gamma_4}^{-1}(\Gamma_3)})^{-1}(\Gamma_2)>, (\pi_3|_{\Gamma_3}\pi_4|_{\pi_4|_{\Gamma_4}^{-1}(\Gamma_3)})^{-1}(\Gamma_2)), \\
					&\hskip5pt\pi_2\circ (\pi_3\pi_4|_{\i<(\pi_3|_{\Gamma_3}\pi_4|_{\pi_4|_{\Gamma_4}^{-1}(\Gamma_3)})^{-1}(\Gamma_2)>}).
			\end{align*}
			Figure~\ref{figure:assosiative01} and \ref{figure:assosiative02} are diagrams of each of them.   
		\begin{figure}[htbp]
		\[
			\begin{tikzpicture}[auto]
				\node (G_1) at (0, 0) {$(G_1, \Omega _1)$}; 
				\node (G_2) at (2.5, 0) {$(G_2, \Omega _2)$};
				\node (H_2) at (2.5, -1.5) {$(H_2, \Gamma_2)$};  
				\node (subset_2) at (2.5, -0.75) {$\rotatebox{90}{$\subset$}$}; 
				\node (G_3) at (5, 0) {$(G_3, \Omega _3)$};
				\node (H_3) at (5, -1.5) {$(H_3, \Gamma_3)$};  
				\node (subset_3) at (5, -0.75) {$\rotatebox{90}{$\subset$}$};  
				\node (G_4) at (7.5, 0) {$(G_4, \Omega _4)$};
				\node (H_4) at (7.5, -1.5) {$(H_4, \Gamma_4)$};  
				\node (subset_4) at (7.5, -0.75) {$\rotatebox{90}{$\subset$}$};

				\node (H'_3) at (5, -3) {$(\i<\pi_3|_{\Gamma_3}^{-1}(\Gamma_2)>, \pi_3|_{\Gamma_3}^{-1}(\Gamma_2))$};   
				\node (subset'_3) at (5, -2.25) {$\rotatebox{90}{$\subset$}$};  
				
				\node (H'_4) at (7.5, -4.5) {$(\i<\pi_4|_{\Gamma_4}^{-1}(\pi_3|_{\Gamma_3}^{-1}(\Gamma_2))>, \pi_4|_{\Gamma_4}^{-1}(\pi_3|_{\Gamma_3}^{-1}(\Gamma_2)))$};  
				\node (subset'_4) at (7.5, -3) {$\rotatebox{90}{$\subset$}$};  				
				
				\draw[->, dashed] (G_1) to node {$\scriptstyle \Phi_1$} (G_2);
				\draw[->>] (H_2) to node {$\scriptstyle \pi_2$} (G_1);
				\draw[->, dashed] (G_2) to node {$\scriptstyle \Phi_2$} (G_3);
				\draw[->>] (H_3) to node {$\scriptstyle \pi_3$} (G_2);
				\draw[->, dashed] (G_3) to node {$\scriptstyle \Phi_3$} (G_4);
				\draw[->>] (H_4) to node {$\scriptstyle \pi_4$} (G_3);
				\draw[->>] (H'_3) to node {$\scriptstyle \pi_3|_{\i<\pi_3|_{\Gamma_3}^{-1}(\Gamma_2)>}$} (H_2);
				\draw[->>] (H'_4) to node [pos=0.7]{$\scriptstyle \pi_4|_{\i<\pi_4|_{\Gamma_4}^{-1}(\pi_3|_{\Gamma_3}^{-1}(\Gamma_2))>}$} (H'_3);
			\end{tikzpicture}
		\]
		\caption{A diagram of $\Phi_3\circ (\Phi_2\circ \Phi_1)$. }\label{figure:assosiative01}
		\end{figure}
		
		\begin{figure}[htbp]
		\[
			\begin{tikzpicture}[auto]{aaa}
				\node (G_1) at (0, 0) {$(G_1, \Omega _1)$}; 
				\node (G_2) at (2.5, 0) {$(G_2, \Omega _2)$};
				\node (H_2) at (2.5, -1.5) {$(H_2, \Gamma_2)$};  
				\node (subset_2) at (2.5, -0.75) {$\rotatebox{90}{$\subset$}$}; 
				\node (G_3) at (5, 0) {$(G_3, \Omega _3)$};
				\node (H_3) at (5, -1.5) {$(H_3, \Gamma_3)$};  
				\node (subset_3) at (5, -0.75) {$\rotatebox{90}{$\subset$}$};  
				\node (G_4) at (7.5, 0) {$(G_4, \Omega _4)$};
				\node (H_4) at (7.5, -1.5) {$(H_4, \Gamma_4)$};  
				\node (subset_4) at (7.5, -0.75) {$\rotatebox{90}{$\subset$}$};

				\node (H'_4) at (7.5, -3) {$(\i<\pi_4|_{\Gamma_4}^{-1}(\Gamma_3)>, \pi_4|_{\Gamma_4}^{-1}(\Gamma_3))$};   
				\node (subset'_3) at (7.5, -2.25) {$\rotatebox{90}{$\subset$}$};  
				
				\node (H''_4) at (7.5, -4.5) {$(\i<(\pi_3|_{\Gamma_3}\pi_4|_{\pi_4|_{\Gamma_4}^{-1}(\Gamma_3)})^{-1}(\Gamma_2)>, (\pi_3|_{\Gamma_3}\pi_4|_{\pi_4|_{\Gamma_4}^{-1}(\Gamma_3)})^{-1}(\Gamma_2))$};  
				\node (subset'_4) at (7.5, -3.75) {$\rotatebox{90}{$\subset$}$};

				\draw[->, dashed] (G_1) to node {$\scriptstyle \Phi_1$} (G_2);
				\draw[->>] (H_2) to node {$\scriptstyle \pi_2$} (G_1);
				\draw[->, dashed] (G_2) to node {$\scriptstyle \Phi_2$} (G_3);
				\draw[->>] (H_3) to node {$\scriptstyle \pi_3$} (G_2);
				\draw[->, dashed]  (G_3) to node {$\scriptstyle \Phi_3$} (G_4);
				\draw[->>] (H_4) to node {$\scriptstyle \pi_4$} (G_3);
				\draw[->>] (H'_4) to node {$\scriptstyle \pi_4|_{\i<\pi_4|_{\Gamma_4}^{-1}(\Gamma_3)>}$} (H_3);
				\draw[->>] (H''_4) to node {$\scriptstyle \pi_3\pi_4|_{\i<(\pi_3\pi_4|_{\pi_4|_{\Gamma_4}^{-1}(\Gamma_3)})^{-1}(\Gamma_2)>}$} (H_2);
			\end{tikzpicture}
		\]
		\caption{A diagram of $(\Phi_3\circ \Phi_2)\circ \Phi_1$. }\label{figure:assosiative02}
		\end{figure}
		
			Since $\pi_4|_{\Gamma_4}^{-1}(\pi_3|_{\Gamma_3}^{-1}(\Gamma_2)) = (\pi_3|_{\Gamma_3}\pi_4|_{\pi_4|_{\Gamma_4}^{-1}(\Gamma_3)})^{-1}(\Gamma_2)$ in $\Gamma_4$, 
			one has $\Phi_3\circ (\Phi_2\circ \Phi_1) = (\Phi_3\circ \Phi_2)\circ \Phi_1$. 
		\end{proof}

		\begin{Prop}\label{Prop:Dinj_id}
			For each object $(G, \Omega)$ of $\Dinj$, $((G, \Omega), \id_G)$ is the identity of $(G, \Omega)$ in $\Dinj$. 
		\end{Prop}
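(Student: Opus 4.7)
The plan is to verify two things in sequence: first, that $((G, \Omega), \id_G)$ is a legitimate morphism in $\Dinj$ from $(G, \Omega)$ to itself, and second, that composing it on either side with an arbitrary morphism reproduces that morphism. The well-definedness is immediate: $G$ is a subgroup of $G$, $\Omega \subset \Omega$, the pair $(G, \Omega)$ lies in $\Obj(\D)$ since $(G, \Omega) \in \Obj(\Dgcf) \subset \Obj(\D)$, and $\id_G : (G, \Omega) \to (G, \Omega)$ is trivially a morphism of $\Db$ because $\id_G|_\Omega$ is the identity map on $\Omega$.

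For the right identity law, I would take an arbitrary morphism $\Phi = ((H, \Gamma), \pi) : (G, \Omega) \to (G', \Omega')$ of $\Dinj$ and apply the composition formula of Proposition~\ref{Prop:comp_Dinj} to $\Phi \circ ((G, \Omega), \id_G)$. With the roles $H_2 = G$, $\Gamma_2 = \Omega$, $\pi_2 = \id_G$ and $H_3 = H$, $\Gamma_3 = \Gamma$, $\pi_3 = \pi$, the composite becomes
\[
\bigl(\bigl(\i<\pi|_{\Gamma}^{-1}(\Omega)>, \pi|_{\Gamma}^{-1}(\Omega)\bigr), \id_G \circ \pi|_{\i<\pi|_{\Gamma}^{-1}(\Omega)>}\bigr).
\]
The key point is that $\pi$ is a morphism of $\Db$, so $\pi|_{\Gamma} : \Gamma \to \Omega$ is a bijection; hence $\pi|_{\Gamma}^{-1}(\Omega) = \Gamma$, and therefore $\i<\pi|_{\Gamma}^{-1}(\Omega)> = \i<\Gamma> = H$ and the second coordinate simplifies to $\pi$. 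This yields $\Phi \circ ((G, \Omega), \id_G) = ((H, \Gamma), \pi) = \Phi$.

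For the left identity law, take $\Phi = ((H, \Gamma), \pi) : (G', \Omega') \to (G, \Omega)$ and compute $((G, \Omega), \id_G) \circ \Phi$ via the same formula, now with $H_3 = G$, $\Gamma_3 = \Omega$, $\pi_3 = \id_G$ and $H_2 = H$, $\Gamma_2 = \Gamma$, $\pi_2 = \pi$. This time the composite is
\[
\bigl(\bigl(\i<\id_G|_{\Omega}^{-1}(\Gamma)>, \id_G|_{\Omega}^{-1}(\Gamma)\bigr), \pi \circ \id_G|_{\i<\id_G|_{\Omega}^{-1}(\Gamma)>}\bigr).
\]
Since $\id_G|_\Omega$ is literally the identity on $\Omega$ and $\Gamma \subset \Omega$, we get $\id_G|_{\Omega}^{-1}(\Gamma) = \Gamma$, whence $\i<\id_G|_{\Omega}^{-1}(\Gamma)> = H$, and the second coordinate again collapses to $\pi$.

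The proof is essentially a bookkeeping exercise, so I do not expect any substantial obstacle; the only subtlety is making sure one invokes the $\Db$-property of $\pi$ in the right-identity case to justify $\pi|_{\Gamma}^{-1}(\Omega) = \Gamma$ (bijectivity of $\pi|_\Gamma$ onto $\Omega$), while the left-identity case needs only the set-theoretic fact $\Gamma \subset \Omega$.
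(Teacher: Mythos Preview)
Your proposal is correct and follows essentially the same approach as the paper's proof: both compute the two composites via the formula of Proposition~\ref{Prop:comp_Dinj} and reduce them to $\Phi$ using $\pi|_{\Gamma}^{-1}(\Omega) = \Gamma$ (from bijectivity of $\pi|_\Gamma$) and $\id_G|_{\Omega}^{-1}(\Gamma) = \Gamma$ (from $\Gamma \subset \Omega$), respectively. The only difference is that you additionally spell out why $((G, \Omega), \id_G)$ is a well-defined morphism of $\Dinj$, which the paper leaves implicit.
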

		\begin{proof}
			Let $\Phi = ((H, \Gamma), \pi) : (G_1, \Omega_1) \to (G_2, \Omega_2)$ be a morphism of $\Dinj$. 
			Since ${\pi|_{\Gamma}}^{-1}(\Omega_1) = \Gamma$, the following holds:
			\begin{align*}
				\Phi\circ ((G_1, \Omega_1), \id_{G_1}) 
					&= ((\i< {\pi|_{\Gamma}}^{-1}(\Omega_1) >, {\pi|_{\Gamma}}^{-1}(\Omega_1)), \id_{G_1}\pi|_{\i< {\pi|_{\Gamma}}^{-1}(\Omega_1) >}) \\
					&= ((H, \Gamma), \pi) \\
					&= \Phi. 
			\end{align*}
			Let $\Psi = ((H', \Gamma'), \pi') : (G_0, \Omega_0) \to (G_1, \Omega_1)$ be a morphism of $\Dinj$. 
			Since $\id_{G_1}^{-1}(\Gamma') = \Gamma'$, we have
			\begin{align*}
				((G_1, \Omega_1), \id_{G_1}) \circ \Psi  
					&= ((\i< \id_{G_1}^{-1}(\Gamma') >, \id_{G_1}^{-1}(\Gamma')), \pi'\id_{G_1}|_{\i< \id_{G_1}^{-1}(\Gamma') >}) \\
					&= ((H', \Gamma'), \pi') \\
					&= \Psi. 
			\end{align*}
				Hence $((G_1, \Omega_1), \id_{G_1})$ is the identity of $(G_1, \Omega_1)$ in $\Dinj$. 
		\end{proof}
		
		The following proposition give a characterization of isomorphisms in $\Dinj$.

		\begin{Prop}\label{Prop:Dinj_isom}
			Let $\Phi = ((H, \Gamma), \pi) : (G_1, \Omega _1) \to (G_2, \Omega _2)$ be a morphism of $\Dinj$. 
			Then $\Phi=((H, \Gamma), \pi)$ is an isomorphism in $\Dinj$ if and only if $H=G_2, \Gamma=\Omega_2$ and $\pi : (G_2, \Omega_2) \to (G_1, \Omega_1)$ is an isomorphism in $\D$. 
		\end{Prop}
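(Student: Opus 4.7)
The plan is to prove both directions by direct manipulation of the composition formula in Proposition~\ref{Prop:comp_Dinj}, using Propositions~\ref{Prop:Disom_Dsurjisom} and~\ref{Prop:Dinj_id} to recognize isomorphisms in $\D$ and identities in $\Dinj$ respectively.

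For the ``if'' direction, I would assume $H = G_2$, $\Gamma = \Omega_2$, and $\pi : (G_2, \Omega_2) \to (G_1, \Omega_1)$ is an isomorphism in $\D$. By Proposition~\ref{Prop:Disom_Dsurjisom}(1), $\pi$ is a group isomorphism with $\pi(\Omega_2) = \Omega_1$, and its inverse $\pi^{-1} : G_1 \to G_2$ satisfies $\pi^{-1}(\Omega_1) = \Omega_2$, so $\pi^{-1} : (G_1, \Omega_1) \to (G_2, \Omega_2)$ is a morphism in $\Db$. Define $\Psi := ((G_1, \Omega_1), \pi^{-1}) : (G_2, \Omega_2) \to (G_1, \Omega_1)$ in $\Dinj$. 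Computing $\Psi \circ \Phi$ via Proposition~\ref{Prop:comp_Dinj}, the relevant preimage is $\pi^{-1}|_{\Omega_1}^{-1}(\Omega_2) = \Omega_1$, giving $\Psi \circ \Phi = ((G_1, \Omega_1), \pi \circ \pi^{-1}) = ((G_1, \Omega_1), \id_{G_1})$, which is the identity by Proposition~\ref{Prop:Dinj_id}. A symmetric computation handles $\Phi \circ \Psi$.

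For the ``only if'' direction, suppose there exists $\Psi = ((H', \Gamma'), \pi') : (G_2, \Omega_2) \to (G_1, \Omega_1)$ with $\Psi \circ \Phi = ((G_1, \Omega_1), \id_{G_1})$ and $\Phi \circ \Psi = ((G_2, \Omega_2), \id_{G_2})$. From the definition of morphism in $\Dinj$, already $H' \subset G_1$ and $\Gamma' \subset \Omega_1$. Unfolding the first composition gives $\pi'|_{\Gamma'}^{-1}(\Gamma) = \Omega_1$; since this set lies in $\Gamma' \subset \Omega_1$, I obtain $\Gamma' = \Omega_1$ and in particular $\pi'(\Omega_1) \subset \Gamma$. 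Because $\pi' : (H', \Omega_1) \to (G_2, \Omega_2)$ is a morphism in $\Db$, the restriction $\pi'|_{\Omega_1}$ is a bijection onto $\Omega_2$, so $\Omega_2 = \pi'(\Omega_1) \subset \Gamma \subset \Omega_2$, forcing $\Gamma = \Omega_2$ and hence $H = \langle \Omega_2 \rangle = G_2$. The same composition also yields $\pi \circ \pi' = \id_{G_1}$, and the symmetric analysis of $\Phi \circ \Psi$ yields $\pi' \circ \pi = \id_{G_2}$.

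Thus $\pi : G_2 \to G_1$ is a group isomorphism, and applying $\pi$ to $\pi'(\Omega_1) = \Omega_2$ gives $\pi(\Omega_2) = \Omega_1$; Proposition~\ref{Prop:Disom_Dsurjisom}(1) then concludes that $\pi$ is an isomorphism in $\D$. The only slightly delicate step is the bookkeeping in ``only if'': one must carefully exploit that equality of morphisms in $\Dinj$ requires equality of the underlying triples $(H, \Gamma, \pi)$, together with the inclusions $\Gamma' \subset \Omega_1$ and $\Gamma \subset \Omega_2$ forced by the definition of $\Dinj$, to collapse the preimage conditions down to $\Gamma' = \Omega_1$ and $\Gamma = \Omega_2$.
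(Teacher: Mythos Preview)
Your proof is correct and follows essentially the same approach as the paper: unfold the composition formula from Proposition~\ref{Prop:comp_Dinj}, match the resulting triple against the identity from Proposition~\ref{Prop:Dinj_id}, and use the containments $\Gamma \subset \Omega_2$, $\Gamma' \subset \Omega_1$ to collapse everything. The only minor difference is that in the ``only if'' part you extract $\Gamma = \Omega_2$ from $\Psi\circ\Phi$ via the $\Db$-bijectivity of $\pi'$, whereas the paper obtains it more directly by unfolding $\Phi\circ\Psi = \id_{(G_2,\Omega_2)}$ (which gives $\pi|_\Gamma^{-1}(\Gamma') = \Omega_2 \subset \Gamma \subset \Omega_2$ immediately); your route works but is a slight detour.
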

		\begin{proof}
			First we show the ``if'' part. One has that $((G_1, \Omega_1), \pi^{-1})$ is the inverse of $\Phi = ((G_2, \Omega_2), \pi)$. 
			
			Let us prove the ``only if'' part. There exists 
			$$\Psi = ((H', \Gamma'), \pi') \in \Isom_{\Dinj}((G_2, \Omega_2), (G_1, \Omega_1))$$ 
			such that $\Psi\Phi = \id_{(G_1, \Omega_1)}$ and $\Phi\Psi=\id_{(G_2, \Omega_2)}$. 
			By $\Phi\Psi=\id_{(G_2, \Omega_2)}$, one has that 
			$(\i< {\pi|_{\Gamma}}^{-1}(\Gamma') >, {\pi|_{\Gamma}}^{-1}(\Gamma'), \pi'\pi) = ((G_2, \Omega_2), \id_{G_2})$. 
			Since $\Omega_2 = {\pi|_{\Gamma}}^{-1}(\Gamma') = \Gamma$, one has that $\Omega_2 = \Gamma$ and $G_2 = H$. 
			Similarly one can show that $G_1 = H'$ and $\Omega_1 = \Gamma'$. 
			Clearly $\pi'\pi = \id_{G_2}$ and $\pi\pi' = \id_{G_1}$, thus $\pi : G_2\to G_1$ is an isomorphism of $\Grp$. 
			Since $\pi : (G_2, \Omega_2) \to (G_1, \Omega_1)$ is a morphism in $\Db$, one has $\pi(\Omega_2) = \Omega_1$. 
			By Proposition~\ref{Prop:Disom_Dsurjisom} (1), $\pi$ is an isomorphism of $\D$. 
		\end{proof}
		
		One can easily show the following lemma.
		
		\begin{Lem}\label{Lem:Dinj_conp_with_Disom}
			Let $(G_0, \Omega_0), (G_1, \Omega _1), (G_2, \Omega _2), (G_3, \Omega_3)$ be objects of $\Dinj$ and $\varphi$ an isomorphism from $(G_2, \Omega _2)$ to $(G_1, \Omega _1)$ in $\D$. 
			We shall consider the isomorphism $\Phi = ((G_2, \Omega_2), \varphi)$ from $(G_1, \Omega_1)$ to $(G_2, \Omega_2)$ in $\Dinj$. 
			Take morphisms $\Phi_0 =((H_1, \Gamma_1), \pi_1) : (G_0, \Omega_0) \to (G_1, \Omega_1)$ and $\Phi_2 =((H_3, \Gamma_3), \pi_3) : (G_2, \Omega_2) \to (G_3, \Omega_3)$ of $\Dinj$. 
			Then the following hold:
			\begin{align*}
				\Phi\circ \Phi_0 &= ((\varphi^{-1}(H_1), \varphi^{-1}(\Gamma_1)), \pi_1\circ \varphi), \\
				\Phi_2\circ \Phi &= ((H_3, \Gamma_3), \varphi\circ\pi_3).
			\end{align*}
			Those diagrams can be written as Figure~\ref{figure:compwithisom}. 
		\begin{figure}[htbp]
		\[
			\begin{tikzpicture}[auto]
				\node (G_0) at (0, 0) {$(G_0, \Omega _0)$}; 
				\node (G_1) at (3.5, 0) {$(G_1, \Omega _1)$};
				\node (H_1) at (3.5, -1.5) {$(H_1, \Gamma_1)$};  
				\node (subset_1) at (3.5, -0.75) {$\rotatebox{90}{$\subset$}$}; 
				\node (G_2) at (7, 0) {$(G_2, \Omega _2)$};
				\node (H_2) at (7, -1.5) {$(G_2, \Omega_2)$};  
				\node (subset_2) at (7, -0.75) {$\rotatebox{90}{$\subset$}$};  
				\node (G_3) at (10.5, 0) {$(G_3, \Omega _3)$};
				\node (H_3) at (10.5, -1.5) {$(H_3, \Gamma_3)$};  
				\node (subset_3) at (10.5, -0.75) {$\rotatebox{90}{$\subset$}$};

				\node (H'_2) at (7, -3) {$(\varphi^{-1}(H_1), \varphi^{-1}(\Gamma_1))$};   
				\node (subset'_2) at (7, -2.25) {$\rotatebox{90}{$\subset$}$};  
				
				\node (H'_3) at (10.5, -3) {$(H_3, \Gamma_3)$};  
				\node (subset'_3) at (10.5, -2.25) {$\rotatebox{90}{$\subset$}$};

				\draw[->, dashed] (G_0) to node {$\scriptstyle \Phi_0$} (G_1);
				\draw[->>] (H_1) to node {$\scriptstyle \pi_1$} (G_0);
				\draw[->, dashed] (G_1) to node {$\scriptstyle \Phi$} (G_2);
				\draw[->>] (H_2) to node {$\scriptstyle \varphi$} (G_1);
				\draw[->>] (H_2) to node [swap]{$\scriptstyle \rotatebox{-20}{$\scriptstyle \cong$}$} (G_1);
				\draw[->, dashed] (G_2) to node {$\scriptstyle \Phi_2$} (G_3);
				\draw[->>] (H_3) to node {$\scriptstyle \pi_3$} (G_2);
				\draw[->>] (H'_2) to node {$\scriptstyle \varphi$} (H_1);
				\draw[->>] (H'_3) to node {$\scriptstyle \pi_3$} (H_2);
			\end{tikzpicture}
		\]
		\caption{The diagram appeared in Lemma~\ref{Lem:Dinj_conp_with_Disom}. }\label{figure:compwithisom}
		\end{figure}
		\end{Lem}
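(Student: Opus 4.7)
The plan is a direct unfolding of the composition formula from Proposition~\ref{Prop:comp_Dinj} in each of the two cases. The only structural input I need about $\varphi$ is what Proposition~\ref{Prop:Disom_Dsurjisom}(1) supplies: since $\varphi$ is an isomorphism in $\D$, it is a group isomorphism $G_2 \to G_1$ satisfying $\varphi(\Omega_2) = \Omega_1$; in particular, $\varphi$ restricts to a bijection $\Omega_2 \to \Omega_1$ and the inverse $\varphi^{-1}$ is defined on all of $G_1$.

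For the first identity, I would plug $\Phi_0 = ((H_1, \Gamma_1), \pi_1)$ and $\Phi = ((G_2, \Omega_2), \varphi)$ into the formula of Proposition~\ref{Prop:comp_Dinj}, playing the roles of $\Phi_1$ and $\Phi_2$ respectively. The resulting ``pullback'' subset is $\varphi|_{\Omega_2}^{-1}(\Gamma_1)$. Because $\varphi\colon \Omega_2 \to \Omega_1$ is a bijection and $\Gamma_1 \subset \Omega_1$, this equals $\varphi^{-1}(\Gamma_1)$. Then I would use that $\varphi$ is a group isomorphism to conclude $\langle \varphi^{-1}(\Gamma_1) \rangle = \varphi^{-1}(\langle \Gamma_1 \rangle) = \varphi^{-1}(H_1)$, and that the partial map is $\pi_1 \circ \varphi|_{\varphi^{-1}(H_1)}$, which matches the claimed right-hand side.

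For the second identity, I would plug $\Phi = ((G_2, \Omega_2), \varphi)$ and $\Phi_2 = ((H_3, \Gamma_3), \pi_3)$ into the same formula, in roles $\Phi_1$ and $\Phi_2$ respectively. The pullback subset then becomes $\pi_3|_{\Gamma_3}^{-1}(\Omega_2)$; but since $\pi_3$ is a morphism of $\Db$, its restriction $\pi_3|_{\Gamma_3}\colon \Gamma_3 \to \Omega_2$ is bijective, so this preimage is simply $\Gamma_3$ itself and the generated subgroup is $\langle \Gamma_3 \rangle = H_3$. The resulting partial map is $\varphi \circ \pi_3$, as claimed.

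I do not anticipate any real obstacle: the content is purely bookkeeping inside the composition formula, and the two slightly different manipulations (inverting through the group isomorphism $\varphi$ on one side, using bijectivity of $\pi_3|_{\Gamma_3}$ on the other) are both elementary. The one point that should be written out carefully is the commutation $\varphi^{-1}(\langle \Gamma_1 \rangle) = \langle \varphi^{-1}(\Gamma_1) \rangle$, which relies on $\varphi$ being a group isomorphism rather than merely a morphism of $\D$.
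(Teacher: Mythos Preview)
Your proposal is correct and is exactly the intended direct computation; the paper itself omits the proof entirely (it just says ``One can easily show the following lemma''), so your unfolding of the composition formula from Proposition~\ref{Prop:comp_Dinj}, together with the observation $\langle \varphi^{-1}(\Gamma_1)\rangle = \varphi^{-1}(\langle \Gamma_1\rangle)$ from the fact that $\varphi$ is a group isomorphism, is precisely what is needed.
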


	\subsection{Functors between $\Qinj$ and $\Dinj$}\label{subsection:functor_inj}
		In this subsection, we construct two functors between the categories $\Qinj$ and $\Dinj$. 
		
		\subsubsection{A functor from $\Qinj$ to $\Dinj$}\label{subsubsection:inj_functor_right}
		We construct a functor 
		$$\FF_{\mathrm{inj}} : \Qinj \to \Dinj$$
		 for objects in Lemma~\ref{Lem:inj_F_def_obj}, and for morphisms in Lemma~\ref{Lem:inj_F_def_mor}. 
    			For the simplicity, we just use the symbol $\FF$ for $\FF_{\mathrm{inj}}$ throughout Section~\ref{section_inj}. 
		
		\begin{Lem}\label{Lem:inj_F_def_obj}
			Let $Q$ be an object of $\Qinj$. Then $\FF(Q) := (\Inn Q, s(Q))$ is an object of $\Dinj$.  
		\end{Lem}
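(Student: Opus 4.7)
The plan is essentially the same as for the analogous Lemma~\ref{Lem:surj_F_def_obj}: the objects of $\Dinj$ are by definition the objects of $\Dgcf$, so I need to verify that $(\Inn Q, s(Q))$ is a pair of a group with a conjugation-stable faithful generator. This amounts to three checks.

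First, by the very definition of $\Inn Q$, the set $s(Q)$ generates $\Inn Q$, so $(\Inn Q, s(Q)) \in \Obj(\Dg)$. Second, $s(Q)$ is conjugation-stable in $\Inn Q$: this is exactly the first assertion of Lemma~\ref{Lem:inner_auto} applied to the subquandle $Q' = Q$, which tells us that $s(Q)$ is stable under the conjugation action of $\Inn Q$ on $\Aut Q$. Hence $(\Inn Q, s(Q)) \in \Obj(\D)$. Third, since $Q$ is an object of $\Qinj$, it is in particular a faithful quandle; therefore the second assertion of Lemma~\ref{Lem:inner_auto} applies and the action $\Inn Q \curvearrowright s(Q)$ by conjugation is faithful, so $(\Inn Q, s(Q)) \in \Obj(\Dgcf) = \Obj(\Dinj)$.

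There is no real obstacle here; the content of the lemma is independent of whether one is in the surjective or injective setting, since it concerns only objects. The distinction between $\Qsurj$ and $\Qinj$ will become relevant only in the next lemma (Lemma~\ref{Lem:inj_F_def_mor}), where the induced morphism $\FF f$ must be constructed from an injective quandle homomorphism $f$, and one has to produce not merely a group homomorphism but a morphism of $\Dinj$, i.e.~a triple $((H, \Gamma), \pi)$. The present lemma can therefore be dispatched by a direct reference to Lemma~\ref{Lem:inner_auto}, in perfect parallel with the proof of Lemma~\ref{Lem:surj_F_def_obj}.
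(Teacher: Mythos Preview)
Your proof is correct and follows exactly the same approach as the paper, which simply writes ``It follows from Lemma~\ref{Lem:inner_auto}.'' You have merely unpacked what that reference entails (the three checks for membership in $\Obj(\Dgcf)=\Obj(\Dinj)$), and your observation that this lemma is identical in content to Lemma~\ref{Lem:surj_F_def_obj} is apt.
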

		\begin{proof}
			It follows from Lemma~\ref{Lem:inner_auto}. 
		\end{proof}
		
		\begin{Lem}\label{Lem:inj_F_def_mor}
			Let $f :Q_1 \to Q_2$ be a morphism of $\Qinj$.  Then the following $\FF f : \FF Q_1\to \FF Q_2$ defines  a morphism of $\Dinj$: 
				$$\FF f:=((\Inn (Q_2, f(Q_1)), s(f(Q_1))), \pi), $$
				$$\pi : \Inn (Q_2, f(Q_1)) \to \Inn Q_1 : s_{f(x_1)}\mapsto s_{x_1}. $$
		\end{Lem}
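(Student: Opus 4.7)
The plan is to verify each structural requirement of a morphism of $\Dinj$ in turn: that $\Inn(Q_2, f(Q_1))$ makes sense as a subgroup of $\Inn(Q_2)$ with $s(f(Q_1))\subset s(Q_2)$; that the pair $(\Inn(Q_2, f(Q_1)), s(f(Q_1)))$ is an object of $\D$; that $\pi$ extends to a well-defined group homomorphism; and that $\pi$ is a morphism of $\Db$.

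First I would observe that $f(Q_1)$ is a subquandle of $Q_2$: for $x_1, y_1\in Q_1$, the homomorphism property gives $s_{f(x_1)}^{\pm 1}(f(y_1)) = f(s_{x_1}^{\pm 1}(y_1))\in f(Q_1)$. Thus $\Inn(Q_2, f(Q_1))$, the subgroup of $\Inn(Q_2)$ generated by $s(f(Q_1))$, is well-defined, and $s(f(Q_1))\subset s(Q_2)$. Conjugation-stability of $s(f(Q_1))$ inside $\Inn(Q_2, f(Q_1))$ reduces by induction on word length to the generator case, where axiom (Q3) gives $s_{f(y_1)}s_{f(x_1)}s_{f(y_1)}^{-1} = s_{s_{f(y_1)}(f(x_1))} = s_{f(s_{y_1}(x_1))}\in s(f(Q_1))$.

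The crucial step is well-definedness of $\pi$. Mimicking the proof of Lemma~\ref{Lem:surj_F_def_mor} but with a twist, given a relation $s_{f(x_{11})}^{\varepsilon_1}\cdots s_{f(x_{1m})}^{\varepsilon_m} = s_{f(y_{11})}^{\delta_1}\cdots s_{f(y_{1n})}^{\delta_n}$ in $\Inn(Q_2)$, I apply both sides to $f(z_1)$ for an arbitrary $z_1\in Q_1$. The homomorphism property of $f$ (together with bijectivity to handle inverse symmetries) pulls $f$ outside each symmetry, yielding
\[
	f\bigl(s_{x_{11}}^{\varepsilon_1}\cdots s_{x_{1m}}^{\varepsilon_m}(z_1)\bigr) = f\bigl(s_{y_{11}}^{\delta_1}\cdots s_{y_{1n}}^{\delta_n}(z_1)\bigr),
\]
and injectivity of $f$ then cancels $f$ to give equality of the two words as maps $Q_1\to Q_1$, hence as elements of $\Inn(Q_1)\subset \Aut(Q_1)$. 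This is the key place where injectivity of $f$ is used, contrasting with Lemma~\ref{Lem:surj_F_def_mor} where surjectivity was used to select a preimage; I expect this to be the main obstacle, since injectivity of $f$ must be invoked at just the right moment to cancel $f$ from both sides of a global equation.

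Finally, $\pi$ is a morphism of $\Db$. The map on generators $\pi|_{s(f(Q_1))}\colon s(f(Q_1))\to s(Q_1)$, $s_{f(x_1)}\mapsto s_{x_1}$, is surjective by construction. For injectivity, if $s_{f(x_1)} = s_{f(y_1)}$ in $s(Q_2)$, then faithfulness of $Q_2$ gives $f(x_1)=f(y_1)$, and injectivity of $f$ gives $x_1=y_1$, hence $s_{x_1}=s_{y_1}$. Thus $\pi|_{s(f(Q_1))}$ is bijective, and since its image $s(Q_1)$ generates $\Inn(Q_1)$, the group homomorphism $\pi$ itself is surjective. Together these verifications show that $((\Inn(Q_2, f(Q_1)), s(f(Q_1))), \pi)$ satisfies all the conditions in the definition of $\Hom_{\Dinj}(\FF Q_1, \FF Q_2)$.
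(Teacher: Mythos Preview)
Your overall approach matches the paper's: verify that $f(Q_1)$ is a subquandle so that $(\Inn(Q_2,f(Q_1)), s(f(Q_1)))$ lies in $\D$ (the paper simply cites Lemma~\ref{Lem:inner_auto} here, while you spell it out), prove well-definedness of $\pi$ by applying both words to $f(z_1)$ and then cancelling $f$ by injectivity, and finally check that $\pi|_{s(f(Q_1))}$ is a bijection onto $s(Q_1)$. The well-definedness step is exactly the paper's argument.

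However, the argument you label ``injectivity'' of $\pi|_{s(f(Q_1))}$ proves the wrong implication. You assume $s_{f(x_1)} = s_{f(y_1)}$ and deduce $s_{x_1} = s_{y_1}$; but this is the statement that the assignment $s_{f(x_1)}\mapsto s_{x_1}$ is \emph{well-defined} (the length-one case of your ``crucial step''), not that it is injective. Injectivity is the converse: from $\pi(s_{f(x_1)})=\pi(s_{f(y_1)})$, i.e.\ $s_{x_1}=s_{y_1}$, conclude $s_{f(x_1)}=s_{f(y_1)}$. The paper obtains this from faithfulness of $Q_1$ (not $Q_2$ and injectivity of $f$): $s_{x_1}=s_{y_1}$ forces $x_1=y_1$, hence $f(x_1)=f(y_1)$ and $s_{f(x_1)}=s_{f(y_1)}$. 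The fix is immediate, but as written your final paragraph does not establish bijectivity of $\pi|_{s(f(Q_1))}$.
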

		\begin{proof}
			The diagram of $\FF f$ can be written as below. 
			\[
				\begin{tikzpicture}[auto]
					\node (FQ_1) at (0, 1.5) {$(\Inn Q_1, s(Q_1))$}; 
					\node (FQ_2) at (4, 1.5) {$(\Inn Q_2, s(Q_2))$};
					\node (H) at (4, 0.5) {$(\Inn (Q_2, f(Q_1)), s(f(Q_1)))$};  
					\node (subset) at (4, 1) {$\rotatebox{90}{$\subset$}$};  
					
					\draw[->, dashed] (FQ_1) to node {$\scriptstyle \FF f$} (FQ_2);
					\draw[->>] (H) to node {$\scriptstyle \pi$} (FQ_1); 
				\end{tikzpicture}
			\]
			
			We show that $(\Inn (Q_2, f(Q_1)), s(f(Q_1)))$ is an object of $\D$ and $\pi : (\Inn (Q_2, f(Q_1)), s(f(Q_1))) \to (\Inn Q_1, s(Q_1))$ is a morphism of $\Db$. 
			By Proposition~\ref{Lem:inner_auto}, $(\Inn (Q_2, f(Q_1)), s(f(Q_1)))$ is an object of $\Obj(\D)$, since $f(Q_1)$ is a subquandle of $Q_2$. 
			Let us prove that $\pi$ is well-defined. 
			Take any $\{ x_{1i}\}_{i=1}^{m}, \{y_{1j}\}_{j=1}^{n} \subset Q_1$. 
			Assume that $s_{f(x_{11})}^{\varepsilon_1} \cdots s_{f(x_{1m})}^{\varepsilon_{m}} = s_{f(y_{11})}^{\delta_1} \cdots s_{f(y_{1n})}^{\delta_n}$ in $\Inn (Q_2, f(Q_1))$, where $\varepsilon_i, \delta_j=1$ or $-1$. 
			We shall show that $s_{x_{11}}^{\varepsilon_1} \cdots s_{x_{1m}}^{\varepsilon_m} = s_{y_{11}}^{\delta_1} \cdots s_{y_{1n}}^{\delta_n}$ in $\Inn(Q_1)$. 
			Take any $z_1\in Q_1$. 
			It is enough to show that $f\circ(s_{x_{11}}^{\varepsilon_1} \cdots s_{x_{1m}}^{\varepsilon_m}) (z_1) = f\circ (s_{y_{11}}^{\delta_1} \cdots s_{y_{1n}}^{\delta_n}) (z_1)$ in $Q_2$, since $f$ is injective. 
			One has that 
			\begin{align*}
				f\circ(s_{x_{11}}^{\varepsilon_1} \cdots s_{x_{1m}}^{\varepsilon_m}) (z_1)
				&= s_{f(x_{11})}^{\varepsilon_1} \cdots s_{f(x_{1m})}^{\varepsilon_{m}} (f(z_1))\\
				&= s_{f(y_{11})}^{\delta_1} \cdots s_{f(y_{1n})}^{\delta_n} (f(z_1))\\
				&= f\circ (s_{y_{11}}^{\delta_1} \cdots s_{y_{1n}}^{\delta_n}) (z_1). 
			\end{align*}
			Hence $\pi : \Inn (Q_2, f(Q_1)) \to \Inn Q_1$ is a well-defined group homomorphism. 
			As $\pi(s(f(Q_1)))\subset s(Q_1)$, $\pi$ is a morphism from $(\Inn (Q_2, f(Q_1)), s(f(Q_1)))$ to $(\Inn Q_1, s(Q_1))$ in $\D$. 
			By the definition, $\pi|_{s(f(Q_1))} : s(f(Q_1)) \to s(Q_1)$ is surjective. 
			Since $Q_1$ is a faithful quandle, $\pi|_{s(f(Q_1))} : s(f(Q_1)) \to s(Q_1)$ is injective. 
			Hence $\pi$ is a morphism of $\Db$. 
		\end{proof}

		\begin{Prop}
			The above $\FF :\Qinj \to \Dinj$ is a functor. 
		\end{Prop}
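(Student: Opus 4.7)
The plan is to verify the two functoriality axioms: $\FF \id_Q = \id_{\FF Q}$ for every object $Q$ of $\Qinj$, and $\FF(f_2 \circ f_1) = \FF f_2 \circ \FF f_1$ for every pair of composable morphisms. Both are direct computations once the definitions are unfolded, and the content lies in matching up the preimage that arises in the composition rule of $\Dinj$ (Proposition~\ref{Prop:comp_Dinj}) with the subquandle $(f_2\circ f_1)(Q_1)$.

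For identities, unfolding the definition in Lemma~\ref{Lem:inj_F_def_mor} with $f=\id_Q$ gives $\id_Q(Q)=Q$, hence $\Inn(Q,\id_Q(Q))=\Inn Q$ and $s(\id_Q(Q))=s(Q)$, while the accompanying map $\pi : s_x \mapsto s_x$ is the identity on $\Inn Q$. So $\FF \id_Q = ((\Inn Q, s(Q)), \id_{\Inn Q})$, which is the identity on $\FF Q$ by Proposition~\ref{Prop:Dinj_id}.

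For composition, let $f_1 : Q_1 \to Q_2$ and $f_2 : Q_2 \to Q_3$ be morphisms of $\Qinj$, and write $\FF f_1 = ((H_2, \Gamma_2), \pi_2)$ and $\FF f_2 = ((H_3, \Gamma_3), \pi_3)$, where $H_2=\Inn(Q_2, f_1(Q_1))$, $\Gamma_2 = s(f_1(Q_1))$, $\pi_2(s_{f_1(x)}) = s_x$, and analogously $H_3 = \Inn(Q_3, f_2(Q_2))$, $\Gamma_3 = s(f_2(Q_2))$, $\pi_3(s_{f_2(y)}) = s_y$. By Proposition~\ref{Prop:comp_Dinj},
$$\FF f_2 \circ \FF f_1 = \bigl((\i<\pi_3|_{\Gamma_3}^{-1}(\Gamma_2)>, \pi_3|_{\Gamma_3}^{-1}(\Gamma_2)), \pi_2 \circ \pi_3|_{\i<\pi_3|_{\Gamma_3}^{-1}(\Gamma_2)>}\bigr).$$
The key step is to identify
$$\pi_3|_{\Gamma_3}^{-1}(\Gamma_2) = \{s_{f_2(y)} : y\in Q_2, \ s_y \in s(f_1(Q_1))\} = s((f_2\circ f_1)(Q_1)),$$
where the second equality uses that $Q_2$ is faithful so that $s_y \in s(f_1(Q_1))$ forces $y \in f_1(Q_1)$. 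Consequently the generated subgroup is $\Inn(Q_3,(f_2\circ f_1)(Q_1))$, and the composed homomorphism sends $s_{f_2(f_1(x))} \mapsto s_{f_1(x)} \mapsto s_x$, so it agrees with the map attached to $\FF(f_2\circ f_1)$ in Lemma~\ref{Lem:inj_F_def_mor}. Both components match, giving $\FF f_2 \circ \FF f_1 = \FF(f_2\circ f_1)$.

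The only substantive step is the preimage identification above; without faithfulness of $Q_2$, the set $\pi_3|_{\Gamma_3}^{-1}(\Gamma_2)$ could be strictly larger than $s((f_2\circ f_1)(Q_1))$, and the composition of $\FF f_1$ with $\FF f_2$ would fail to reproduce $\FF(f_2\circ f_1)$. Thus the proof is essentially a bookkeeping exercise, with the one nontrivial appeal being to the faithful-quandle hypothesis built into $\Qinj$.
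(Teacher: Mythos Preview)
Your proof is correct and follows essentially the same approach as the paper: both reduce functoriality to identifying the preimage $\pi_3|_{\Gamma_3}^{-1}(\Gamma_2)$ with $s((f_2\circ f_1)(Q_1))$, and both invoke faithfulness of $Q_2$ to pass from $s_y\in s(f_1(Q_1))$ to $y\in f_1(Q_1)$. Your explicit remark that this is the only substantive step, and that it fails without faithfulness, is a nice addition not present in the paper's version.
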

		\begin{proof}
			It is obvious that $\FF\id_{Q} = \id_{\FF Q}$ for each object $Q$ of $\Qinj$. 
			Let $f_1 :Q_1 \to Q_2$ and $f_2 : Q_2 \to Q_3$ be morphisms of $\Qinj$.  
			We show that $\FF(f_2\circ f_1) = \FF f_2\circ \FF f_1$.  
			We describe 
			\begin{align*}
				&\FF f_1 = ((\Inn (Q_2, f_1(Q_1)), s(f_1(Q_1))), \pi_2), \\
				&\FF f_2 = ((\Inn (Q_3, f_2(Q_2)), s(f_2(Q_2))), \pi_3), \\
				&\FF(f_2\circ f_1) = ((\Inn (Q_3, f_2f_1(Q_1)), s(f_2f_1(Q_1))), \pi). 
			\end{align*}
			
			By the definition of composition, $\FF f_2\circ \FF f_1 = ((\i<\Gamma_3'>, \Gamma_3'), \pi_2\circ \pi_3|_{\i< \Gamma_3' >})$, where $\Gamma_3' = {\pi_3|_{s(f_2(Q_2))}}^{-1}(s(f_1(Q_1)))$. 
			It is clear that $s(f_2f_1(Q_1))\subset \Gamma_3'$. 
			We shall prove the inverse conclusion. 
				Take any $s_{f_2(x_2)}\in \Gamma_3'\subset s(f_2(Q_2))\ (x_2\in Q_2)$. 
				Since $\pi_3(s_{f_2(x_2)})\in s(f_1(Q_1))$, there exists $x_1\in Q_1$ such that $\pi_3(s_{f_2(x_2)}) = s_{x_2} = s_{f_1(x_1)}$. 
				$Q_2$ is faithful quandle, so $x_2=f_1(x_1)$. 
				Hence $s_{f_2(x_2)}$ belongs to $s(f_2f_1(Q_1))$, and $\Gamma_3' = s(f_2f_1(Q_1))$. 
			We have $\i< \Gamma_3' > = \Inn(Q_3, f_2f_1(Q_1))$ and $\pi_2\circ\pi_3|_{\i< \Gamma_3'>} = \pi$. 
			Thus $\FF f_2\circ \FF f_1 = \FF(f_2\circ f_1)$. 
		\end{proof}

		\subsubsection{A functor from $\Dinj$ to $\Qinj$}\label{subsubsection:inj_functor_left}
		We construct a functor $$\GG_{\mathrm{inj}} : \Qinj \leftarrow \Dinj$$ for objects in Lemma~\ref{Lem:inj_G_def_obj}, and for morphisms in Lemma~\ref{Lem:inj_G_def_mor}. 
    		For the simplicity, we just use the symbol $\GG$ for $\GG_{\mathrm{inj}}$ throughout Section~\ref{section_inj}. 
		
		\begin{Lem}\label{Lem:inj_G_def_obj}
			Let $(G, \Omega )$ be an object of $\Dinj$. Then $\GG(G, \Omega ) := \Conj(\Omega )$ is an object of $\Qinj$.  
		\end{Lem}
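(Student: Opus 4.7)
The plan is to imitate the proof of Lemma~\ref{Lem:surj_G_def_obj} almost verbatim, since $\Obj(\Dinj) = \Obj(\Dgcf) = \Obj(\Dsurj)$ by definition. So the content of the lemma is essentially: the object assignment coincides with that of $\GG_{\mathrm{surj}}$, and $\Conj(\Omega)$ lies in $\Q$ for every $(G,\Omega)\in\Obj(\Dinj)$; the category $\Qinj$ has the same objects as $\Q$ (only the morphisms differ), so there is nothing more to check beyond faithfulness of $\Conj(\Omega)$.

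The first step is to observe that $\Omega$ is a union of conjugacy classes of $G$, which is required so that $\Conj_G(\Omega)$ makes sense as a subquandle of $\Conj(G)$. This is immediate from the definition of $\D$: conjugation-stability says $g\Omega g^{-1}\subset \Omega$ for every $g\in G$, and applying this to $g^{-1}$ gives the reverse inclusion, so each $G$-conjugacy class meeting $\Omega$ is entirely contained in $\Omega$.

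Next, since $(G,\Omega)\in\Obj(\Dinj)=\Obj(\Dgcf)$, the action $G\curvearrowright \Omega$ by conjugation is faithful. As noted right after Definition~\ref{Def:Dgcf}, this faithfulness is equivalent to saying that the centralizer of $\Omega$ in $G$ is trivial. Lemma~\ref{Lem:Conjfaithful} then directly yields that $\Conj_G(\Omega)=\Conj(\Omega)$ is a faithful quandle, hence an object of $\Q$ and therefore of the subcategory $\Qinj$ (which has the same objects as $\Q$). No obstacles are expected; the argument is just a reduction to Lemma~\ref{Lem:Conjfaithful} via the observations on conjugation-stable faithful generators recorded after Definition~\ref{Def:Dgcf}.
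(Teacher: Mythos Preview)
Your proof is correct and follows essentially the same approach as the paper: the paper simply observes that $\Obj(\Qinj)=\Obj(\Qsurj)$ and $\Obj(\Dinj)=\Obj(\Dsurj)$ and refers back to Lemma~\ref{Lem:surj_G_def_obj}, which in turn invokes Lemma~\ref{Lem:Conjfaithful} exactly as you do. Your version is slightly more explicit (spelling out why $\Omega$ is a union of conjugacy classes and why the centralizer is trivial), but the logical content is identical.
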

		\begin{proof}
			It is proved in the same way as Lemma~\ref{Lem:surj_G_def_obj}, since $\Obj(\Qinj)=\Obj(\Qsurj)$ and $\Obj(\Dinj)=\Obj(\Dsurj)$. 
		\end{proof}
		
		\begin{Lem}\label{Lem:inj_G_def_mor}
			Let $(G_1, \Omega _1)$ and $(G_2, \Omega _2)$ be objects of $\Dinj$ and 
				$\Phi = ((H, \Gamma), \pi) : (G_1, \Omega _1) \to (G_2, \Omega _2)$ a morphism in $\Dinj$.  
			Then the following $\GG\Phi$ is a morphism of $\Qinj$:  
			$$\GG\Phi: \Conj(\Omega _1)\to \Conj(\Omega _2): \omega_1\mapsto \pi|_{\Gamma}^{-1}(\omega_1). $$  
		\end{Lem}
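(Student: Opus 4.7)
The plan is to verify three things: that $\GG\Phi$ is a well-defined map between the underlying sets $\Omega_1$ and $\Omega_2$, that it is a quandle homomorphism, and that it is injective. The main inputs are the defining properties of morphisms in $\Db$ and $\D$, together with the fact that the quandle operations on $\Conj(\Omega_1)$ and $\Conj(\Omega_2)$ are induced from group conjugation in $G_1$ and $G_2$ respectively.

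First, since $\pi : (H, \Gamma) \to (G_1, \Omega_1)$ is a morphism of $\Db$, the restriction $\pi|_\Gamma : \Gamma \to \Omega_1$ is bijective, so $\pi|_\Gamma^{-1}(\omega_1)$ is a single well-defined element of $\Gamma \subset \Omega_2$; hence $\GG\Phi$ is well-defined as a set map $\Omega_1 \to \Omega_2$. Injectivity of $\GG\Phi$ is immediate from the bijectivity of $\pi|_\Gamma$, since the inverse of a bijection is itself a bijection.

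The main (and essentially only nontrivial) step is checking that $\GG\Phi$ is a quandle homomorphism, i.e.\ that $\GG\Phi(s_{\omega_1}(\omega_1')) = s_{\GG\Phi(\omega_1)}(\GG\Phi(\omega_1'))$ for all $\omega_1, \omega_1' \in \Omega_1$. I would set $\gamma_1 := \pi|_\Gamma^{-1}(\omega_1)$ and $\gamma_1' := \pi|_\Gamma^{-1}(\omega_1')$ in $\Gamma$. Because $(H, \Gamma) \in \Obj(\D)$, the generator $\Gamma$ is conjugation-stable under $H$, so $\gamma_1 \gamma_1' \gamma_1^{-1} \in \Gamma$. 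Applying the group homomorphism $\pi$ gives
\[
\pi(\gamma_1 \gamma_1' \gamma_1^{-1}) = \pi(\gamma_1)\pi(\gamma_1')\pi(\gamma_1)^{-1} = \omega_1 \omega_1' \omega_1^{-1} = s_{\omega_1}(\omega_1'),
\]
and since $\pi|_\Gamma$ is injective, this identifies $\gamma_1\gamma_1'\gamma_1^{-1}$ as $\pi|_\Gamma^{-1}(s_{\omega_1}(\omega_1'))$. Thus $\GG\Phi(s_{\omega_1}(\omega_1')) = \gamma_1\gamma_1'\gamma_1^{-1} = s_{\gamma_1}(\gamma_1') = s_{\GG\Phi(\omega_1)}(\GG\Phi(\omega_1'))$ inside $\Conj(\Omega_2)$, as required.

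The only potential obstacle is noticing that one needs \emph{both} conjugation-stability of $\Gamma$ in $H$ (to stay inside $\Gamma$ after conjugating) \emph{and} injectivity of $\pi|_\Gamma$ (to identify the preimage uniquely); both are supplied by the definitions of $\D$ and $\Db$, so the verification is a direct diagram chase with no real obstruction.
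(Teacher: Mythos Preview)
Your proof is correct and follows essentially the same approach as the paper's. The paper's proof is actually terser: it simply asserts that ``by direct calculation'' one obtains $\GG\Phi\circ s_{\omega_1}(\omega_1') = s_{\GG\Phi(\omega_1)}\circ \GG\Phi(\omega_1')$ and then notes injectivity from bijectivity of $\pi|_\Gamma$, whereas you have spelled out that direct calculation in full, including the role of conjugation-stability of $\Gamma$ in $H$.
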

		\begin{proof}
			Take any $\omega_1, \omega_1'\in \Conj(\Omega_1)$. 
			By direct calculation, one has that $\GG\Phi\circ s_{\omega_1} (\omega_1') = s_{\GG\Phi(\omega_1)} \circ \GG\Phi(\omega_1')$. 
			Thus $\GG\Phi$ is a quandle homomorphism. 
			Since $\pi|_{\Gamma} : \Gamma \to \Omega_1$ is bijective, $\GG\Phi$ is injective. 
		\end{proof}
		
		\begin{Prop}
			The above $\GG:\Dinj \to \Qinj$ is a functor. 
		\end{Prop}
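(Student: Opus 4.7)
The plan is to verify the two functor axioms, namely that $\GG$ preserves identities and preserves composition of morphisms.

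For identities, by Proposition~\ref{Prop:Dinj_id} the identity morphism on $(G,\Omega)$ in $\Dinj$ is $((G,\Omega), \id_G)$. Applying the definition of $\GG$ on morphisms (Lemma~\ref{Lem:inj_G_def_mor}), the induced quandle homomorphism $\Conj(\Omega)\to\Conj(\Omega)$ sends $\omega \mapsto \id_G|_{\Omega}^{-1}(\omega)=\omega$, which is precisely $\id_{\Conj(\Omega)}=\id_{\GG(G,\Omega)}$.

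For composition, I would take composable morphisms $\Phi_1 = ((H_2, \Gamma_2), \pi_2) : (G_1, \Omega _1) \to (G_2, \Omega _2)$ and $\Phi_2 = ((H_3, \Gamma_3), \pi_3) : (G_2, \Omega _2) \to (G_3, \Omega _3)$ in $\Dinj$, set $\Gamma' := \pi_3|_{\Gamma_3}^{-1}(\Gamma_2)$, and use Proposition~\ref{Prop:comp_Dinj} to write $\Phi_2\circ\Phi_1 = ((\langle \Gamma'\rangle, \Gamma'),\; \pi_2\circ\pi_3|_{\langle\Gamma'\rangle})$. Then $\GG(\Phi_2\circ\Phi_1)$ sends $\omega_1\in \Omega_1$ to $(\pi_2\circ\pi_3|_{\Gamma'})^{-1}(\omega_1)$, while $(\GG\Phi_2)\circ(\GG\Phi_1)$ sends $\omega_1$ to $\pi_3|_{\Gamma_3}^{-1}\bigl(\pi_2|_{\Gamma_2}^{-1}(\omega_1)\bigr)$. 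To show these agree, set $\omega_2 := \pi_2|_{\Gamma_2}^{-1}(\omega_1)\in \Gamma_2$ and $\omega_3 := \pi_3|_{\Gamma_3}^{-1}(\omega_2)\in \Gamma_3$; by construction $\omega_3\in\Gamma'$ and $(\pi_2\circ\pi_3)(\omega_3)=\omega_1$, and the bijectivity of $(\pi_2\circ\pi_3)|_{\Gamma'}:\Gamma'\to\Omega_1$ (already established inside the proof of Proposition~\ref{Prop:comp_Dinj}) forces $\omega_3 = (\pi_2\circ\pi_3|_{\Gamma'})^{-1}(\omega_1)$, completing the verification.

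No individual step is substantively hard; the main obstacle is bookkeeping — keeping straight which restriction of which $\pi_i$ is being inverted on which subset of $\Gamma_i$, and consistently invoking the fact that each $\pi_i|_{\Gamma_i}:\Gamma_i\to\Omega_{i-1}$ is bijective because morphisms of $\Dinj$ are, by definition, morphisms of $\Db$. This bijectivity is what makes every pre-image appearing above well-defined as a single element, which in turn is what makes both sides of the composition identity match on generators.
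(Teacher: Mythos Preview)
Your proposal is correct and follows essentially the same approach as the paper: verify $\GG\id_{(G,\Omega)}=\id_{\GG(G,\Omega)}$ via Proposition~\ref{Prop:Dinj_id}, then check $\GG(\Phi_2\circ\Phi_1)=\GG\Phi_2\circ\GG\Phi_1$ by unwinding the composition formula from Proposition~\ref{Prop:comp_Dinj} and comparing both sides on an arbitrary $\omega_1\in\Omega_1$. Your version is slightly more explicit in naming the intermediate elements $\omega_2,\omega_3$ and invoking bijectivity of $(\pi_2\circ\pi_3)|_{\Gamma'}$, whereas the paper simply writes the chain of equalities directly, but the substance is identical.
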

		\begin{proof}
			Take any object $(G, \Omega)$ of $\Dinj$. 
			We show that $\GG\id_{(G, \Omega)} = \id_{\GG(G, \Omega)}$. 
			Since $\id_{(G, \Omega)} = ((G, \Omega), \id_G)$, 
			one has $\GG\id_{(G, \Omega)}(\omega) = {\id_G|_{\Omega}}^{-1}(\omega) = \omega$ for each $\omega\in \Conj(\Omega)$. Thus $\GG\id_{(G, \Omega)} = \id_{\GG(G, \Omega)}$. 
			
			Let $\Phi_1=((H_2, \Gamma_2), \pi_2) : (G_1, \Omega_1) \to (G_2, \Omega_2)$ and $\Phi_2=((H_3, \Gamma_3), \pi_2) : (G_2, \Omega_2) \to (G_3, \Omega_3)$ be morphisms of $\Dinj$. 
			We show that $\GG(\Phi_2\circ\Phi_1) = \GG\Phi_2\circ\GG\Phi_1$. 
			Take any $\omega_1\in \Conj(\Omega_1)= \GG(G_1, \Omega_1)$. 
			By the definition of composition in $\Dinj$, one has that 
			$$\Phi_2\circ \Phi_1 = ((\i<\pi_3|_{\Gamma_3}^{-1}(\Gamma_2)>, \pi_3|_{\Gamma_3}^{-1}(\Gamma_2)), \pi_2\circ \pi_3|_{\i<\pi_3|_{\Gamma_3}^{-1}(\Gamma_2)>}). $$
			We have 
			\begin{align*}
				\GG(\Phi_2\circ \Phi_1)(\omega_1) 
				&= {((\pi_2\circ \pi_3)|_{\pi_3|_{\Gamma_3}^{-1}(\Gamma_2)})}^{-1}(\omega_1)\\
				&= {\pi_3|_{\Gamma_2}}^{-1}({\pi_2|_{\Gamma_1}}^{-1}(\omega_1)) \\
				&= (\GG\Phi_2\circ \GG\Phi_1)(\omega_1). 
			\end{align*}
			Hence $\GG(\Phi_2\circ\Phi_1) = \GG\Phi_2\circ\GG\Phi_1$. 
		\end{proof}

	\subsection{A category equivalence between $\Qinj$ and $\Dinj$}\label{subsection:equivalence_inj}
		In this subsection, we show that $\FF$ and $\GG$ give a category equivalence between $\Qinj$ and $\Dinj$,
 		where $\FF$ and $\GG$ are defined in Sections~\ref{subsubsection:inj_functor_right} and \ref{subsubsection:inj_functor_left}. 
		
		First we show that there exists a natural isomorphism $\theta : \GG\FF \Rightarrow \id_{\Qinj}$. 
		
		\begin{Prop}\label{Prop:ntfGFtoid_Qinj}
			The following $\theta$ is a natural isomorphism from $\GG\FF$ to $\id_{\Qinj}$: 
			$$\theta = \{\theta_{Q} : \GG\FF Q \to Q : s_{x}\mapsto x\}_{Q\in \Obj(\Qinj)}  :\GG\FF \Rightarrow \id_{\Qinj}.$$
		\end{Prop}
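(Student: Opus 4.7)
The plan is to mimic the strategy used in Proposition~\ref{Prop:ntfGFtoid_Qsurj} but adapted to the injective setting, where the morphism $\GG\FF f$ is described via the partial-map formalism of $\Dinj$. First, I would verify that each $\theta_Q$ is a well-defined isomorphism in $\Qinj$. Since $Q$ is faithful, the assignment $s_x \mapsto x$ is an honest bijection between $s(Q)$ and $Q$, and the quandle identity $s_x s_y s_x^{-1} = s_{s_x(y)}$ in $\Inn(Q)$, which governs the conjugation-quandle structure on $\Conj(s(Q)) = \GG\FF Q$, translates directly into the statement that $\theta_Q$ is a quandle homomorphism. Being bijective with a quandle-homomorphism inverse, $\theta_Q$ is an isomorphism in $\mathbf{Q}$, and as an injective quandle homomorphism it is a morphism, hence an isomorphism, in $\Qinj$.

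Next, for naturality, I would take an arbitrary morphism $f : Q_1 \to Q_2$ in $\Qinj$ and unpack $\GG\FF f$ using the definitions from Sections~\ref{subsubsection:inj_functor_right} and \ref{subsubsection:inj_functor_left}. By Lemma~\ref{Lem:inj_F_def_mor},
$$\FF f = ((\Inn(Q_2, f(Q_1)), s(f(Q_1))), \pi),$$
where $\pi : s_{f(x_1)} \mapsto s_{x_1}$ is bijective on generators, and by Lemma~\ref{Lem:inj_G_def_mor}, $\GG\FF f(\omega_1) = \pi|_{s(f(Q_1))}^{-1}(\omega_1)$. Thus on a typical generator $s_{x_1}$ of $\Conj(s(Q_1))$, one computes $\GG\FF f(s_{x_1}) = s_{f(x_1)}$.

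With this identification in hand, verifying the commuting square
\[
\begin{tikzpicture}[auto]
\node (A) at (0,0) {$\GG\FF Q_1$};
\node (B) at (3,0) {$Q_1$};
\node (C) at (0,-1.5) {$\GG\FF Q_2$};
\node (D) at (3,-1.5) {$Q_2$};
\draw[->] (A) to node {$\scriptstyle \theta_{Q_1}$} (B);
\draw[->] (C) to node [swap]{$\scriptstyle \theta_{Q_2}$} (D);
\draw[->] (A) to node [swap]{$\scriptstyle \GG\FF f$} (C);
\draw[->] (B) to node {$\scriptstyle f$} (D);
\end{tikzpicture}
\]
reduces to chasing a generator $s_{x_1}$: one side gives $(f \circ \theta_{Q_1})(s_{x_1}) = f(x_1)$ and the other gives $(\theta_{Q_2} \circ \GG\FF f)(s_{x_1}) = \theta_{Q_2}(s_{f(x_1)}) = f(x_1)$. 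Since $\Conj(s(Q_1))$ has $s(Q_1)$ as its underlying set and $\theta_{Q_1}$, $\GG\FF f$ are determined by their values on generators, agreement on generators suffices.

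The step that requires the most care is making sure that $\GG\FF f$ really is $s_{x_1} \mapsto s_{f(x_1)}$, because this rests on $\pi|_{s(f(Q_1))}$ being a bijection onto $s(Q_1)$; this uses both the faithfulness of $Q_1$ (so that distinct $s_{f(x_1)}$ map to distinct $s_{x_1}$) and the injectivity of $f$, which are precisely the ingredients that made Lemma~\ref{Lem:inj_F_def_mor} land in $\Db$. Beyond that bookkeeping, the proof is a routine generator-chasing argument parallel to Proposition~\ref{Prop:ntfGFtoid_Qsurj}.
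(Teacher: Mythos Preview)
Your proposal is correct and follows essentially the same approach as the paper's proof: both verify that $\theta_Q$ is a well-defined isomorphism using faithfulness, then check naturality by chasing an element $s_{x_1}$ through the square using $\GG\FF f(s_{x_1}) = s_{f(x_1)}$. Your version is more explicit in unpacking why $\GG\FF f$ acts as $s_{x_1}\mapsto s_{f(x_1)}$ via the inverse of $\pi|_{s(f(Q_1))}$, a step the paper performs silently; note also that since the underlying set of $\GG\FF Q_1$ is exactly $s(Q_1)$, your remark about ``agreement on generators'' is really agreement on all elements.
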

		\begin{proof}
			Recall that each object $Q$ of $\Qinj$ is faithful, thus $\theta_{Q}$ is a well-defined isomorphism. 
			Take any morphism $f : Q_1 \to Q_2$ in $\Qinj$. 
			It is enough to show that the following diagram commutes: 
			\[
			\begin{tikzpicture}[auto]
				\node (Q_1) at (0, 0) {$Q_1$}; 
				\node (Q_2) at (0, -2) {$Q_2$};  
				\node (GFQ_1) at (2, 0) {$\GG\FF Q_1$};
				\node (GFQ_2) at (2, -2) {$\GG\FF Q_2$};
				\node (Q_10) at (4, 0) {$Q_1$};
				\node (Q_20) at (4, -2) {$Q_2$}; 
				
				\draw[->] (Q_1) to node {$\scriptstyle f$} (Q_2);
				\draw[->] (Q_10) to node {$\scriptstyle f$} (Q_20);
				\draw[->] (GFQ_1) to node {$\scriptstyle \theta_{Q_1}$} (Q_10);
				\draw[->] (GFQ_2) to node [swap]{$\scriptstyle \theta_{Q_2}$} (Q_20);
				\draw[->] (GFQ_1) to node [swap]{$\scriptstyle \GG\FF f$} (GFQ_2);
			\end{tikzpicture}
			\]

			Take any $s_{x_1}\in \GG\FF Q_1\ (x_1\in Q_1)$. Then we have
			\begin{align*}
				(\theta_{Q_2}\circ (\GG\FF f)) (s_{x_1})&=\theta_{Q_2}(s_{f(x_1)})\\
				&=f(x_1)\\
				&= (f \circ \theta_{Q_1}) (s_{x_1}). 
			\end{align*}
			The proof is completed. 
		\end{proof}
		
		Next we show that there exists a natural isomorphism $\eta : \FF\GG \Rightarrow \id_{\Dinj}$. 
		
		\begin{Prop}\label{Prop:ntfFGtoid_Dinj}
			The following $\eta :\FF\GG \Rightarrow \id_{\Dinj}$ is a natural isomorphism from $\FF\GG$ to $\id_{\Dinj}$: 
			$$\eta = \{\eta_{(G, \Omega )}  = ((G, \Omega ), \varphi_{(G, \Omega )}) : \FF\GG (G, \Omega ) \to (G, \Omega ) \}_{(G, \Omega )\in \Obj(\Dinj)}, $$
			where $\varphi_{(G, \Omega )}$ is the isomorphism of $\D$ in Lemma~\ref{Prop:FGtoid_Disomvarphi}. 
			Remark that $\FF\GG(G, \Omega) = (\Inn(\Conj(\Omega)), s(\Conj(\Omega)))$ for each $(G, \Omega) \in \Obj(\Dinj)$. 
		\end{Prop}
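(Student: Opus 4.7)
The plan is to verify the two requirements for $\eta$ to be a natural isomorphism: first, that each component $\eta_{(G,\Omega)}$ is an isomorphism in $\Dinj$, and second, that the naturality square commutes for every morphism $\Phi$ of $\Dinj$.

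For the component claim, I would observe that $\eta_{(G,\Omega)} = ((G,\Omega), \varphi_{(G,\Omega)})$ has its ``$H$'' equal to the full group $G$ (a subgroup of $G$) and its ``$\Gamma$'' equal to $\Omega$. By Proposition~\ref{Prop:FGtoid_Disomvarphi}, $\varphi_{(G,\Omega)} \colon (G,\Omega)\to (\Inn(\Conj(\Omega)), s(\Conj(\Omega)))$ is an isomorphism in $\D$; in particular it is a morphism of $\Db$, so $\eta_{(G,\Omega)}$ is a well-defined morphism of $\Dinj$ from $\FF\GG(G,\Omega)$ to $(G,\Omega)$. Applying Proposition~\ref{Prop:Dinj_isom} then yields that $\eta_{(G,\Omega)}$ is an isomorphism in $\Dinj$.

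For naturality, fix a morphism $\Phi = ((H,\Gamma), \pi) \colon (G_1, \Omega_1) \to (G_2, \Omega_2)$ of $\Dinj$ and I would compute both composites explicitly.  Using the formula for $\FF$ on morphisms (Lemma~\ref{Lem:inj_F_def_mor}) together with $\GG\Phi(\omega_1) = \pi|_{\Gamma}^{-1}(\omega_1)$ and bijectivity of $\pi|_{\Gamma} \colon \Gamma \to \Omega_1$, one gets $\GG\Phi(\Conj(\Omega_1)) = \Gamma$, hence
\[
\FF\GG\Phi = ((\Inn(\Conj(\Omega_2), \Gamma), s(\Gamma)), \pi_{\GG\Phi}), \quad \pi_{\GG\Phi}(s_\gamma) = s_{\pi(\gamma)} \ (\gamma \in \Gamma).
\]
Applying Lemma~\ref{Lem:Dinj_conp_with_Disom} with the isomorphism $\eta_{(G_2,\Omega_2)}$ on the left (so that the ``$\varphi^{-1}$'' map is $\varphi_{(G_2,\Omega_2)}^{-1}$, which sends $s_\omega \mapsto \omega$ on generators) one computes $\varphi_{(G_2,\Omega_2)}^{-1}(\Inn(\Conj(\Omega_2), \Gamma)) = H$ and $\varphi_{(G_2,\Omega_2)}^{-1}(s(\Gamma)) = \Gamma$, so
\[
\eta_{(G_2,\Omega_2)} \circ \FF\GG\Phi = ((H, \Gamma), \pi_{\GG\Phi} \circ \varphi_{(G_2,\Omega_2)}).
\]
On the other side, since $\pi|_\Gamma^{-1}(\Omega_1) = \Gamma$, Proposition~\ref{Prop:comp_Dinj} collapses the composition $\Phi \circ \eta_{(G_1,\Omega_1)}$ to $((H,\Gamma), \varphi_{(G_1,\Omega_1)} \circ \pi)$.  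It then remains to check that the two group homomorphisms $\pi_{\GG\Phi} \circ \varphi_{(G_2,\Omega_2)}$ and $\varphi_{(G_1,\Omega_1)} \circ \pi$ from $H$ to $\Inn(\Conj(\Omega_1))$ coincide; both send a generator $\gamma \in \Gamma$ to $s_{\pi(\gamma)}$, and since $\Gamma$ generates $H$, they are equal as group homomorphisms.

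The primary obstacle is bookkeeping: the composition rule of $\Dinj$ involves pulling back generating subsets through the inner morphism, and one must carefully track how $\FF$, $\GG$, and the various ``$\varphi$'' maps interact on generators. The computation itself reduces to routine chasing once one uses Lemma~\ref{Lem:Dinj_conp_with_Disom} to absorb $\eta_{(G_2,\Omega_2)}$ (an isomorphism of the special form $((G_2,\Omega_2), \varphi_{(G_2,\Omega_2)})$) into the composition, which transforms the problem into an equality of two group homomorphisms out of $H$ that is checked on the generating set $\Gamma$.
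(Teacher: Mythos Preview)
Your proposal is correct and follows essentially the same approach as the paper's proof: both invoke Proposition~\ref{Prop:Dinj_isom} for the component isomorphisms, both use Lemma~\ref{Lem:Dinj_conp_with_Disom} to simplify the composite $\eta_{(G_2,\Omega_2)}\circ\FF\GG\Phi$, both identify $\varphi_{(G_2,\Omega_2)}^{-1}(s(\Gamma))=\Gamma$ and hence the subgroup equals $H$, and both finish by checking the equality of the two group homomorphisms on the generating set $\Gamma$. The only cosmetic difference is that for $\Phi\circ\eta_{(G_1,\Omega_1)}$ you appeal directly to Proposition~\ref{Prop:comp_Dinj} whereas the paper again cites Lemma~\ref{Lem:Dinj_conp_with_Disom}; these give the same formula.
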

		\begin{proof}
			By Proposition~\ref{Prop:Dinj_isom}, $\eta_{(G, \Omega)}$ is an isomorphism in $\Dinj$ for each object $(G, \Omega)$ of $\Dinj$. 
			Take any morphism $\Phi=((H, \Gamma), \pi) : (G_1, \Omega_1) \to (G_2, \Omega_2)$ in $\Dinj$. 
			Let us denote by $\FF\GG\Phi = ((H', \Gamma'), \pi')$ where 
			\begin{align*}
				H' &= \Inn(\GG(G_2, \Omega _2), \GG\Phi(\GG(G_1, \Omega _1))), \\
				\Gamma' &= s(\GG\Phi(\GG(G_1, \Omega _1))), \\
				\pi' &: (H', \Gamma') \to \FF\GG(G_1, \Omega_1): s_{\pi|_{\Gamma}^{-1}(\omega_1)}\mapsto s_{\omega_1}.
			\end{align*} 
			It is enough to show that the diagram in Figure~\ref{figure:naturaleta} commutes
			i.e.~$\Phi\circ \eta_{(G_1, \Omega_1)} = \eta_{(G_2, \Omega_2)}\circ \FF\GG\Phi$: 
			\begin{figure}[htbp]
			\[
			\begin{tikzpicture}[auto]
				\node (G_1) at (0-0.5, 0) {$(G_1, \Omega _1)$}; 
				\node (G_2) at (0-0.5, -4*7/8) {$(G_2, \Omega _2)$};  
				\node (H) at (-2*7/8-0.5, -4*7/8) {$(H, \Gamma)$};  
				\node (subsetH) at (-1.1*7/8-0.5, -4*7/8) {$\subset$};
				\node (FGG_1) at (4*7/8, 0) {$\FF\GG (G_1, \Omega _1)$};
				\node (FGG_2) at (4*7/8, -4*7/8) {$\FF\GG (G_2, \Omega _2)$};
				\node (G_1migi) at (8*7/8, 0) {$(G_1, \Omega _1)$};
				\node (G_2migi) at (8*7/8, -4*7/8) {$(G_2, \Omega _2)$}; 
				
				\node (FGH) at (1.8*7/8-0.3, -4*7/8) {$(H', \Gamma')$};
				\node (subsetFGH) at (2.7*7/8-0.15, -4*7/8) {$\subset$};
				\node (G_1ue) at (8*7/8, 1) {$(G_1, \Omega _1)$};
				\node (subsetG_1ue) at (8*7/8, 0.5) {$\rotatebox{-90}{$\subset$}$};
				\node (G_2shita) at (8*7/8, -4*7/8-1) {$(G_2, \Omega _2)$};
				\node (subsetG_2shita) at (8*7/8, -4*7/8 -.5) {$\rotatebox{90}{$\subset$}$};
				\node (Hmigi) at (10*7/8, -4*7/8) {$(H, \Gamma)$}; 
				\node (subsetHmigi) at (9.1*7/8, -4*7/8) {$\supset$};

				\draw[->] (G_1) to node {$\scriptstyle \Phi$} (G_2);
				\draw[->>] (H) to node {$\scriptstyle \pi$} (G_1);
				
				\draw[->] (G_1migi) to node {$\scriptstyle \Phi$} (G_2migi);
				\draw[->>] (Hmigi) to node [swap]{$\scriptstyle \pi$} (G_1migi);

				\draw[->] (FGG_1) to node [swap]{$\scriptstyle \eta_{(G_1, \Omega _1)}$} (G_1migi);
				\draw[->] (FGG_2) to node {$\scriptstyle \eta_{(G_2, \Omega _2)}$} (G_2migi);
				
				\draw[->>] (G_1ue) to node [swap]{$\scriptstyle \varphi_{(G_1, \Omega _1)}$} (FGG_1);
				\draw[->>] (G_2shita) to node {$\scriptstyle \varphi_{(G_2, \Omega _2)}$} (FGG_2);
				
				\draw[->] (FGG_1) to node {$\scriptstyle \FF\GG \Phi$} (FGG_2);
				\draw[->>] (FGH) to node {$\scriptstyle \pi'$} (FGG_1);
			\end{tikzpicture}
			\]
			\caption{The diagram appeared in the proof of Proposition~\ref{Prop:ntfFGtoid_Dinj}. }\label{figure:naturaleta}
			\end{figure}

			By Lemma~\ref{Lem:Dinj_conp_with_Disom}, we have
			$\Phi\circ \eta_{(G_1, \Omega_1)} = ((H, \Gamma), \varphi_{(G_1, \Omega_1)}\circ \pi)$ and 
			$\eta_{(G_2, \Omega_2)}\circ \FF\GG\Phi$
			$=$
			$(({\varphi_{(G_2, \Omega_2)}}^{-1}(H'), {\varphi_{(G_2, \Omega_2)}}^{-1}(\Gamma')), $
			$\pi'\circ{\varphi_{(G_2, \Omega_2)}})$. 
			Since $\Gamma'=s(\GG\Phi(\Conj(\Omega_1))) = \{s_{\pi|_{\Gamma}^{-1}(\omega_1)} \mid \omega_1\in \Omega_1\} = \{s_{\gamma} \mid \gamma\in \Gamma\}$, 
			one has that ${\varphi_{(G_2, \Omega_2)}}^{-1}(\Gamma') = \Gamma$ in $\Omega_2$, and hence ${\varphi_{(G_2, \Omega_2)}}^{-1}(H') = H$ in $G_2$. 
			We show that $\varphi_{(G_1, \Omega_1)}\circ \pi = \pi'\circ{\varphi_{(G_2, \Omega_2)}}$. 
			Take any $\gamma\in \Gamma$. 
			Then one has 
			\begin{align*}
				(\varphi_{(G_1, \Omega_1)}\circ \pi)(\gamma) &= s_{\pi(\gamma)} \\
				&= \pi'(s_{\pi|_{\Gamma}^{-1}(\pi(\gamma))}) \\
				&= \pi'(s_{\gamma}) \\
				&= (\pi'\circ{\varphi_{(G_2, \Omega_2)}} )(\gamma). 
			\end{align*}
			Since $\Gamma$ generates $H$,  we have that $\varphi_{(G_1, \Omega_1)}\circ \pi = \pi'\circ{\varphi_{(G_2, \Omega_2)}}$. 
			Hence $\Phi\circ \eta_{(G_1, \Omega_1)} = \eta_{(G_2, \Omega_2)}\circ \FF\GG\Phi$. 
		\end{proof}
		
		Propositions~\ref{Prop:ntfGFtoid_Qinj} and \ref{Prop:ntfFGtoid_Dinj} imply the following theorem. 
		
		\begin{Thm}\label{equivalence_inj}
			The above $(\FF, \GG, \theta, \eta)$ gives a category equivalence between $\Qinj$ and $\Dinj$. 
		\end{Thm}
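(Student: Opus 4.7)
The plan is to unwind the definition of category equivalence given in Section~\ref{section_preliminaries} and verify its four constituents directly, assembling ingredients already prepared in the preceding subsections. First, the functors $\FF = \FF_{\mathrm{inj}} : \Qinj \to \Dinj$ and $\GG = \GG_{\mathrm{inj}} : \Dinj \to \Qinj$ were established in Section~\ref{subsection:functor_inj}, so they are in hand. Second, Proposition~\ref{Prop:ntfGFtoid_Qinj} produces a natural isomorphism $\theta : \GG\FF \Rightarrow \id_{\Qinj}$, and Proposition~\ref{Prop:ntfFGtoid_Dinj} produces a natural isomorphism $\eta : \FF\GG \Rightarrow \id_{\Dinj}$. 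Both families of componentwise isomorphisms, together with their naturality squares, are therefore available at no further cost.

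A minor bookkeeping point is that the definition of category equivalence in Section~\ref{section_preliminaries} phrases one of the natural transformations as $\id_{\Qinj} \Rightarrow \GG\FF$, which is the opposite direction from what Proposition~\ref{Prop:ntfGFtoid_Qinj} supplies. I would handle this by taking the pointwise inverse: the family $\{\theta_Q^{-1}\}_{Q \in \Obj(\Qinj)}$ consists of isomorphisms in $\Qinj$ (since each $\theta_Q$ is), and naturality of $\theta^{-1}$ is obtained from the naturality square of $\theta$ by precomposing and postcomposing with $\theta_{Q_1}^{-1}$ and $\theta_{Q_2}^{-1}$. With this convention fixed, the quadruple $(\FF, \GG, \theta, \eta)$ fits the definition of a category equivalence verbatim.

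No substantive obstacle remains at this final assembly step, since the real work has been front-loaded into the two cited propositions. In particular, the hard part that has already been handled is Proposition~\ref{Prop:ntfFGtoid_Dinj}, whose naturality square is nontrivial because morphisms of $\Dinj$ are spans $((H, \Gamma), \pi)$ rather than ordinary maps; the verification there required Lemma~\ref{Lem:Dinj_conp_with_Disom} to evaluate both $\Phi \circ \eta_{(G_1, \Omega_1)}$ and $\eta_{(G_2, \Omega_2)} \circ \FF\GG\Phi$ explicitly and to match their subgroup/generator data under $\varphi_{(G, \Omega)}^{\pm 1}$. By contrast, Proposition~\ref{Prop:ntfGFtoid_Qinj} was routine once one invokes the faithfulness of each object of $\Qinj$. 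The present theorem is thus a formal consequence of functoriality of $\FF$ and $\GG$ combined with these two naturality results.
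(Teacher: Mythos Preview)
Your proposal is correct and matches the paper's own proof, which simply states that the theorem follows from Propositions~\ref{Prop:ntfGFtoid_Qinj} and \ref{Prop:ntfFGtoid_Dinj}. Your additional remark about inverting $\theta$ to match the direction required by the definition of category equivalence is a fair bookkeeping point that the paper leaves implicit.
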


	\section{Applications and examples}\label{section_app}
		In this section, as an application of Theorem~\ref{intromainthminj}, we study the set of all injective quandle homomorphisms 
		from $R_3$ into $R_9$. Here we denote by $R_n$ the dihedral quandle of order $n$. 
		
	\subsection{Applications}
		
		One has that the functor $\FF_{\mathrm{inj}} :\Qinj\to \Dinj$ defined in Section~\ref{section_inj} implies the following proposition. 

		\begin{Prop}\label{Prop:Inn_divides_Inn}
			For finite faithful quandles $Q_1$ and $Q_2$, if there exists an injective quandle homomorphism $f : Q_1\to Q_2$, then $\#\Inn(Q_1)$ divides $\#\Inn(Q_2)$. 
		\end{Prop}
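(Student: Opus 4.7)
The plan is to apply the functor $\FF_{\mathrm{inj}}$ constructed in Section~\ref{section_inj} to the injective quandle homomorphism $f : Q_1 \to Q_2$, and then extract divisibility information from the resulting data $\FF f = ((H, \Gamma), \pi)$ using two elementary group-theoretic facts: Lagrange's theorem and the first isomorphism theorem.

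More precisely, by Lemma~\ref{Lem:inj_F_def_mor}, the morphism $\FF f$ in $\Dinj$ has the form
$((\Inn(Q_2, f(Q_1)), s(f(Q_1))), \pi)$,
where $\pi : \Inn(Q_2, f(Q_1)) \to \Inn(Q_1)$ is a morphism of $\Db$ sending $s_{f(x_1)} \mapsto s_{x_1}$. Since every morphism of $\Db$ is surjective on the underlying groups (as noted right after the definition of $\Db$), $\pi$ is a surjective group homomorphism. Because $Q_2$ is finite, $\Inn(Q_2)$ is a subgroup of the finite group of bijections on $Q_2$, hence finite, and $\Inn(Q_2, f(Q_1))$ is a subgroup of $\Inn(Q_2)$ by construction.

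Now I would combine two divisibility statements. First, Lagrange's theorem applied to the subgroup inclusion $\Inn(Q_2, f(Q_1)) \subseteq \Inn(Q_2)$ gives that $\#\Inn(Q_2, f(Q_1))$ divides $\#\Inn(Q_2)$. Second, the first isomorphism theorem applied to the surjection $\pi$ gives
\[
\#\Inn(Q_1) \cdot \#\ker(\pi) = \#\Inn(Q_2, f(Q_1)),
\]
so $\#\Inn(Q_1)$ divides $\#\Inn(Q_2, f(Q_1))$. Chaining these two divisibilities yields the desired conclusion.

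I do not expect any genuine obstacle here: the proof is essentially a formal consequence of the way $\FF_{\mathrm{inj}}$ was built, together with the two most basic counting theorems in finite group theory. The only mild point to verify is that $\pi$ is surjective, but this is immediate from the defining property of $\Db$-morphisms (bijectivity on the generators already forces surjectivity on the groups, since the generators generate), so the argument should be short and direct.
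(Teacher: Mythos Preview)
Your proposal is correct and is essentially the paper's intended argument spelled out in full: the paper merely states that the proposition follows from the functor $\FF_{\mathrm{inj}}$, and what you have written is precisely the standard unpacking of that claim via Lagrange's theorem and the first isomorphism theorem applied to the data $(H,\Gamma,\pi)$ produced by Lemma~\ref{Lem:inj_F_def_mor}.
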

		
		Let us apply Proposition~\ref{Prop:Inn_divides_Inn} to a more concrete case. 
		We set up our terminologies for Alexander quandles. 
		
		\begin{Ex}[Alexander quandles]
			For an additive abelian group $A$ and its group automorphism $\varphi\in \Aut(A)$, the following $s$ is a quandle structure on $A$. 
			For each $a, b \in A$, $s_a(b) = \varphi(b) + (\id_A-\varphi)(a)$. 
			This quandle is called the {\it Alexander quandle} of $A$ with respect to $\varphi$ and denoted by $\Alex(A, \varphi)$. 
			It is well known (cf.~\cite[Section~1]{Indecomp} ) that a quandle $\Alex(A, \varphi)$ is faithful if and only if $\varphi$ is fixed-point free (i.e.~$\varphi(a)=a$ implies that $a$ is equal to the unit of $A$). 
		\end{Ex}

		\begin{Ex}[Dihedral quandles]
			For $\Z/n\Z$ the cyclic group of order $n$ and its automorphism $-\id$, the Alexander quandle $\Alex(\Z/n\Z, -\id)$ is called the {\it dihedral quandle} of order $n$ and denoted by $R_n$. 
				The dihedral quandle $R_n$ is faithful if and only if $n$ is odd.
		\end{Ex}

		\begin{Thm}[{\cite[Theorem 6.1.(3)]{application_Thm}}]\label{Thm:Alex_Inn}
			Let $A$ be a finite additive abelian group and $\varphi\in \Aut(A)$ a fixed-point free automorphism. 
			Then $\Inn(\Alex(A, \varphi))$ is isomorphic to $A\rtimes \i<\varphi>$ as groups. 
		\end{Thm}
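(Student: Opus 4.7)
\medskip

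\noindent\textbf{Proof proposal.}
The plan is to realize $\Inn(\Alex(A,\varphi))$ as a subgroup of the affine group $\mathrm{Aff}(A):=A\rtimes \Aut(A)$, where the pair $(a,\psi)\in \mathrm{Aff}(A)$ acts on $A$ by $b\mapsto \psi(b)+a$.
By the very definition of the Alexander quandle, each symmetry $s_a:A\to A$ is the affine map
$b\mapsto \varphi(b)+(\id_A-\varphi)(a)$, hence corresponds to the element
$((\id_A-\varphi)(a),\varphi)\in A\rtimes\Aut(A)$.
Since $\Inn(\Alex(A,\varphi))$ is generated by these symmetries and all of them have linear part $\varphi$, the linear part of any element of $\Inn(\Alex(A,\varphi))$ lies in $\langle\varphi\rangle\subseteq\Aut(A)$.
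This gives the inclusion $\Inn(\Alex(A,\varphi))\hookrightarrow A\rtimes\langle\varphi\rangle$.

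For the reverse inclusion I would first check that $\langle\varphi\rangle$ sits inside $\Inn(\Alex(A,\varphi))$: setting $a=0$ in the definition of $s_a$ yields $s_0=\varphi$ as a map on $A$, so $\varphi$ and all its powers belong to $\Inn(\Alex(A,\varphi))$.
Next, for an arbitrary $a\in A$, I would compute the composite $s_a\circ s_0^{-1}$ and obtain the pure translation $b\mapsto b+(\id_A-\varphi)(a)$.
Here the fixed-point-free hypothesis enters crucially: $\id_A-\varphi$ has trivial kernel (since $\varphi(a)=a$ forces $a=0$), and as $A$ is finite, injectivity upgrades to bijectivity.
Consequently $\{(\id_A-\varphi)(a)\mid a\in A\}=A$, and every translation by an element of $A$ belongs to $\Inn(\Alex(A,\varphi))$.
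Combining translations with powers of $\varphi$ produces every element of $A\rtimes\langle\varphi\rangle$, giving the opposite inclusion.

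Putting the two inclusions together yields $\Inn(\Alex(A,\varphi))=A\rtimes\langle\varphi\rangle$ as subgroups of $\mathrm{Aff}(A)$, and in particular the desired group isomorphism.
The one step that requires genuine care, and which I expect to be the main obstacle to write cleanly, is the identification of $\Inn$ as a subgroup of $\mathrm{Aff}(A)$ in a way that respects the semidirect product structure: one must verify that the map $(a,\psi)\mapsto (b\mapsto \psi(b)+a)$ is a group homomorphism (with the standard semidirect product law $(a_1,\psi_1)(a_2,\psi_2)=(a_1+\psi_1(a_2),\psi_1\psi_2)$) and that the two inclusions displayed above are compatible with this identification.
Everything else reduces to the finite-set bijectivity argument for $\id_A-\varphi$ and the direct formula $s_0=\varphi$.
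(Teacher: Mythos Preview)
Your argument is correct and cleanly structured. Note, however, that the paper does not supply its own proof of this theorem: it is quoted from \cite[Theorem~6.1.(3)]{application_Thm} and used as a black box in the application to dihedral quandles. So there is no in-paper proof to compare against.

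That said, your proof is essentially the standard one (and indeed the one underlying the cited reference): embed $\Inn(\Alex(A,\varphi))$ into $\mathrm{Aff}(A)=A\rtimes\Aut(A)$, observe that the linear parts of all generators are $\varphi$, and then use $s_0=\varphi$ together with $s_a\circ s_0^{-1}=$ translation by $(\id_A-\varphi)(a)$ to get the reverse inclusion. The only genuinely nontrivial input is exactly the one you flagged, namely that $\id_A-\varphi$ is a bijection on the finite group $A$ because $\varphi$ is fixed-point free. The verification that $(a,\psi)\mapsto(b\mapsto\psi(b)+a)$ is a faithful group homomorphism from $A\rtimes\Aut(A)$ into the permutation group of $A$ is routine, so your caveat about ``the main obstacle'' is overly cautious; once you write down the semidirect product law you stated, both compatibility and injectivity are immediate.
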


		For finite Alexander quandles and dihedral quandles, Proposition~\ref{Prop:Inn_divides_Inn} and Theorem~\ref{Thm:Alex_Inn} imply the following corollary. 
		\begin{Cor}\label{Cor:Alex}
			Let $A$ and $B$ be both finite abelian groups. 
			We take $\varphi\in \Aut(A)$ and $\psi\in\Aut(B)$ as fixed-point free automorphisms which have the same order. 
			If there exists an injective quandle homomorphism $f: \Alex(A, \varphi)\to \Alex(B, \psi)$, then $\# A$ divides $\# B$. 
			In particular, for odd numbers $m$ and $n$, 
			if there exists an injective quandle homomorphism $f: R_m\to R_n$, then $m$ divides $n$. 
		\end{Cor}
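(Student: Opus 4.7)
The plan is to assemble Proposition~\ref{Prop:Inn_divides_Inn}, Theorem~\ref{Thm:Alex_Inn}, and the recalled criterion characterizing faithful Alexander quandles. Write $k$ for the common order of $\varphi$ and $\psi$. First, since $\varphi$ and $\psi$ are fixed-point free, $\Alex(A, \varphi)$ and $\Alex(B, \psi)$ are both faithful finite quandles. The hypothesis provides an injective quandle homomorphism between them, so Proposition~\ref{Prop:Inn_divides_Inn} applies and gives
$$\#\Inn(\Alex(A, \varphi)) \mid \#\Inn(\Alex(B, \psi)).$$

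Next I would rewrite each side by means of Theorem~\ref{Thm:Alex_Inn}: $\Inn(\Alex(A, \varphi)) \cong A \rtimes \langle \varphi \rangle$ and $\Inn(\Alex(B, \psi)) \cong B \rtimes \langle \psi \rangle$, so as sets these have cardinalities $(\#A)\cdot k$ and $(\#B)\cdot k$ respectively. The divisibility above therefore reads $(\#A)\cdot k \mid (\#B)\cdot k$, and cancelling the common factor $k$ yields $\#A \mid \#B$, which is the first claim.

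For the dihedral statement, I would observe that $R_m = \Alex(\Z/m\Z, -\id)$ and that the automorphism $-\id$ on $\Z/m\Z$ is fixed-point free precisely when $m$ is odd (since $-a=a$ forces $2a=0$, i.e.\ $a=0$ when $m$ is odd). For odd $m, n \geq 3$, the element $-\id$ has order $2$ in both $\Aut(\Z/m\Z)$ and $\Aut(\Z/n\Z)$, so the common-order hypothesis is automatically met and the first part of the corollary gives $m \mid n$. The degenerate case $m=1$ is trivial.

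Since every step is either a direct invocation of an earlier result or an elementary cardinality manipulation, I do not foresee any real obstacle; the only bookkeeping is to confirm the faithfulness hypothesis of Proposition~\ref{Prop:Inn_divides_Inn} via fixed-point freeness before invoking it, and to note that equality of the orders of $\varphi$ and $\psi$ is exactly what allows the factor $k$ from the semidirect product to be cancelled.
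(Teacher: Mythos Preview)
Your proposal is correct and follows exactly the approach the paper indicates: the paper simply states that the corollary follows from Proposition~\ref{Prop:Inn_divides_Inn} and Theorem~\ref{Thm:Alex_Inn}, and you have spelled out precisely how, including the cancellation of the common factor $k$ and the verification that $-\id$ is fixed-point free on $\Z/m\Z$ for odd $m$.
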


	\subsection{Injective quandle homomorphisms from $R_3$ to $R_9$}
		
		In this subsection, we study the set of all injective quandle homomorphisms from $R_3$ into $R_9$ i.e.~$\Hom_{\Qinj}(R_3, R_9)$. 
		
		First, we observe that the proposition below holds. 
		
		\begin{Prop}\label{R_3toR_9hom_concreat}
			For each $c \in \Z/9\Z, \varepsilon\in\{\pm 1\}$, the following map $f_{c,\varepsilon} : R_3\to R_9$ is an injective quandle homomorphism: 
			$$ f_{c,\varepsilon} : \Z/3\Z \to \Z/9\Z : [k]_3 \mapsto c + \varepsilon[3k]_9,$$
			where we put $[ k ]_n := k + n\Z$ in $\Z/n \Z$.  
		\end{Prop}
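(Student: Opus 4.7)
The plan is to verify the two claims directly from the definition of the dihedral quandle. First I would recall that $R_n = \Alex(\Z/n\Z, -\id)$, so the symmetry is given explicitly by
\[
s_{[i]_n}([j]_n) = [2i-j]_n
\]
in both $R_3$ and $R_9$. Under this description, the quandle homomorphism axiom $f_{c,\varepsilon}\circ s_{[i]_3} = s_{f_{c,\varepsilon}([i]_3)}\circ f_{c,\varepsilon}$ becomes the affine identity
\[
f_{c,\varepsilon}([2i-j]_3) = 2f_{c,\varepsilon}([i]_3) - f_{c,\varepsilon}([j]_3)
\]
in $\Z/9\Z$. This is immediate: the constants contribute $2c - c = c$, and the linear part gives $\varepsilon[6i-3j]_9$ on both sides, so there is essentially nothing to compute beyond matching terms.

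For well-definedness of $f_{c,\varepsilon}$ as a map $\Z/3\Z \to \Z/9\Z$, I would note that replacing $k$ by $k+3$ changes $3k$ by $9$, which vanishes modulo $9$; hence $[3k]_9$ depends only on $[k]_3$. For injectivity, if $f_{c,\varepsilon}([k]_3)=f_{c,\varepsilon}([k']_3)$, then $\varepsilon[3k]_9 = \varepsilon[3k']_9$, so $3(k-k')\equiv 0\pmod 9$, which gives $k\equiv k'\pmod 3$.

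There is no serious obstacle here; the statement is a routine verification rather than a deep result, and it is included as a source of concrete maps to which the equivalence of Theorem~\ref{intromainthminj} (and the divisibility consequence in Corollary~\ref{Cor:Alex}) can be applied. The only conceptual point worth flagging is that the map is built from two symmetries of $R_9$ that preserve the quandle structure: the translation $[j]_9\mapsto [j]_9+c$, which is a quandle automorphism of $R_9$ (since $s$ is affine with slope $-1$, translations commute with all symmetries), and the scaling $[k]_3\mapsto \varepsilon[3k]_9$, which is a quandle homomorphism because $-\id$ on $\Z/3\Z$ and $-\id$ on $\Z/9\Z$ are intertwined by multiplication by $3$. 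Composing these two observations gives the statement without further calculation.
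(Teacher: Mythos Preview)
Your verification is correct and complete. The paper itself gives no proof of this proposition at all --- it is introduced with ``we observe that the proposition below holds'' and then immediately used --- so your direct computation from the dihedral quandle formula $s_{[i]_n}([j]_n)=[2i-j]_n$ is exactly the routine check the paper leaves to the reader.
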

		
		By Proposition~\ref{R_3toR_9hom_concreat}, we have 
		$$\Hom_{\Qinj}(R_3, R_9)\supset \{f_{c, \varepsilon} \mid c \in \Z/9\Z, \varepsilon\in \{\pm 1\} \}$$ 
		and $\#\Hom_{\Qinj}(R_3,$ $R_9) \geq 18$. 
		Let us prove that 
		$$\Hom_{\Qinj}(R_3, R_9) = \{f_{c, \varepsilon} \mid c \in \Z/9\Z, \varepsilon\in \{\pm 1\} \}. $$
		Note that the equality could be shown directly.  
		However, we shall give a group theoretic proof of it as below. 
		
		As in Section~\ref{section_inj}, let us denote by $\FF_{\mathrm{inj}} : \Qinj\to \Dinj$ the category equivalence (see Theorem~\ref{equivalence_inj}). 
		For the equality above, by Proposition~\ref{Prop:faithfulfulless}, it is enough to show that
		$$\#\Hom_{\Dinj}(\FF_{\mathrm{inj}} R_3, \FF_{\mathrm{inj}} R_9) = 18. $$
		
		For inner automorphism groups of dihedral quandles, 
		the following theorem is well known. 
		\begin{Thm}
			Let $n$ be an odd integer. 
			Then the inner automorphism group $\Inn(R_n)$ is isomorphic to the dihedral group $D_{2n}$ of order $2n$, that is,  
			$$D_{2n}=\i< a, x \mid a^n=x^2=1, xax=a^{-1}>. $$ 
		\end{Thm}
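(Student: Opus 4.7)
The plan is to exhibit explicit elements of $\Inn(R_n)$ satisfying the defining relations of $D_{2n}$, and then to conclude by a direct count of elements. (As a shortcut one could instead invoke Theorem~\ref{Thm:Alex_Inn} with $A=\Z/n\Z$ and $\varphi=-\id$ to obtain $\Inn(R_n)\cong \Z/n\Z\rtimes \i<-\id>$, which by inspection of the action is exactly $D_{2n}$; but I would give the more elementary argument below.)

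First I would write the symmetries of $R_n=\Alex(\Z/n\Z,-\id)$ explicitly as $s_k(j)=2k-j$ for $k,j\in\Z/n\Z$, and observe that the composition of two symmetries is a translation: $s_k\circ s_l(j)=j+2(k-l)$. Setting $a:=s_1\circ s_0$ and $x:=s_0$, we see that $a$ is translation by $2$ while $x$ is the involution $j\mapsto -j$, and a one-line check gives $x a x = a^{-1}$. Since $n$ is odd, $2$ is a unit in $\Z/n\Z$, so $a$ has order exactly $n$; combined with $x^2=\id$, this furnishes a surjective group homomorphism $\rho:D_{2n}\to \Inn(R_n)$ sending the presentation generators $a,x$ to these elements. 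Surjectivity holds because every generator $s_k$ of $\Inn(R_n)$ satisfies $s_k(j)=2k-j=a^k x(j)$.

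Next I would bound $|\Inn(R_n)|$ from above: every element of $\Inn(R_n)$ is a finite composition of the affine involutions $s_k:j\mapsto 2k-j$, hence itself an affine map of the form $j\mapsto \varepsilon j+c$ with $\varepsilon\in\{\pm 1\}$ and $c\in\Z/n\Z$. Thus $|\Inn(R_n)|\le 2n=|D_{2n}|$, so the surjection $\rho$ is forced to be an isomorphism.

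The only subtle point is the role played by the hypothesis that $n$ is odd: it is used to guarantee that translation by $2$ has order exactly $n$, which is what makes the counting argument yield the equality $|\Inn(R_n)|=2n$ rather than a proper divisor. Once this is observed, both the verification of the relations and the upper bound on the cardinality are entirely routine.
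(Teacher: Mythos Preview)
Your proof is correct. Note, however, that the paper does not actually supply a proof of this statement: it introduces the theorem with the phrase ``the following theorem is well known'' and states it without argument, treating it as a standard fact about dihedral quandles. So there is no paper proof to compare against.

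That said, both avenues you sketch are valid. The shortcut via Theorem~\ref{Thm:Alex_Inn} is in spirit the closest to how the paper handles the result, since the paper quotes that theorem from the literature in the same subsection and clearly regards the dihedral case as an immediate specialization. Your direct argument---constructing $a=s_1\circ s_0$ and $x=s_0$, verifying the dihedral relations, and bounding $|\Inn(R_n)|$ by $2n$ via the affine form $j\mapsto \varepsilon j+c$---is a clean self-contained proof that avoids appealing to the semidirect-product structure of $\Inn(\Alex(A,\varphi))$ in general. The one place where oddness of $n$ is genuinely needed, namely that translation by $2$ has order exactly $n$, is correctly identified.
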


		Let us put 
		\begin{align*}
			D_{18}&=\i< a, x \mid a^9=x^2=1, xax=a^{-1}>, \\
			A&=\{ a^{k}x\mid k=0, \dots, 8\}\subset D_{18}, \\
			D_{6}&=\i< b, y \mid b^3=y^2=1, yby=b^{-1}> \text{ and}\\
			B&=\{ y, by, b^{2}y\}\subset D_{6}. 
		\end{align*}
		One has that $\FF_{\mathrm{inj}} R_9 \cong (D_{18}, A)$ and $\FF_{\mathrm{inj}} R_3 \cong (D_{6}, B)$. 
		We shall determine $\Hom_{\Dinj}((D_{6}, B),$ $(D_{18}, A))$. 
		We put $H_1:=\i<a^{3}, x>, H_2:=\i<a^{3}, a^{4}x>$ and $H_3:=\i<a^{3}, a^{2}x>$ as subgroups of $D_{18}$, 
		and take conjugation-stable generators of them as $\Gamma_1:=\{x, a^{3}x, a^{6}x\}\subset H_1, \Gamma_2:=\{ax, a^{4}x, a^{7}x\}\subset H_2$ and $\Gamma_3:=\{a^{2}x, a^{5}x, a^{8}x\}$ $\subset H_3$. 
		One can see that $\Gamma_1, \Gamma_2$ and $\Gamma_3$ are conjugate to each other in $D_{18}$, hence $H_1, H_2$ and $H_3$ are conjugate subgroups in $D_{18}$. 
		
		For sets $\Gamma$ and $\Omega$, we use the symbol $\Bij(\Gamma, \Omega)$ for the set of bijective maps from $\Gamma$ to $\Omega$. 
		By direct calculation, one has the following observation: 
		For $i=1,2,3$ and any $f\in \Bij(\Gamma_i, B)$, there exists a unique surjective group homomorphism $\tilde{f} : H_i\to D_6$ such that $\tilde{f}|_{\Gamma_i} = f$.
		
		By a computer search on GAP(\cite{GAP}), we have 
		$$\Hom_{\Dinj}((D_{6}, B), (D_{18}, A)) = \bigsqcup_{i=1}^{3} \{((H_i, \Gamma_i), \tilde{f} ) \mid f\in\Bij(\Gamma_i, B)\}.$$
		Hence the following holds:
		\begin{align*}
			\# \Hom_{\Dinj}((D_{6}, B), (D_{18}, A)) &= \sum_{i=1}^{3} \#\{((H_i, \Gamma_i), \tilde{f} ) \mid f\in\Bij(\Gamma_i, B)\} \\
				&= \sum_{i=1}^{3} \#\Bij(\Gamma_i, B) = 18.
		\end{align*} 
		Therefore, we have $$\Hom_{\Qinj}(R_3, R_9) = \{f_{c, \varepsilon} \mid c \in \Z/9\Z, \varepsilon\in \{\pm 1\} \}. $$

\section*{Acknowledgement}
The author would like to thank Hiroshi Tamaru, Takayuki Okuda and Akira Kubo for valuable advices and encouragements.


\begin{thebibliography}{99}

\bibitem{application_Thm}
  {V. G. Bardakov and P. Dey and M. Singh},
  {\it Automorphism groups of quandles arising from groups},
  {Monatsh. Math.},
  {\bf 184}
  {(2017)},
  {519--530}. 


\bibitem{bardnasyb}
  {V. G. Bardakov and T. Nasybullov},
  {\it Embeddings of quandles into groups},
  {J. Algebra Appl.},
  {\bf 19}
  {(2020)},
  {2050136, 20}. 



\bibitem{CategoryofQ1}
  {D. Bourn},
  {\it A structural aspect of the category of quandles},
  {J. Knot Theory Ramifications},
  {\bf 24}
  {(2015)},
  {1550060, 35}. 



\bibitem{Quotientsofquandles}
 {E. Bunch, P. Lofgren, A. Rapp and D. N. Yetter},
 {\it On quotients of quandles},
 {J. Knot Theory Ramifications},
 {\bf 19}
 {(2010)},
 {1145--1156}. 


\bibitem{CarterSurvey}
    {J. S. Carter}, 
    {\it A survey of quandle ideas}, 
    {Introductory lectures on knot theory},
    {Ser. Knots Everything},
    {\bf 46}
    {(2012)}, 
    {22--53}. 


\bibitem{Qass_pointedabelgrp}
  {W. E. Clark, M. Elhamdadi, X. Hou, M. Saito and T. Yeatman},
  {\it Connected quandles associated with pointed abelian groups},
  {Pacific J. Math.},
  {\bf 264}
  {(2013)},
  {31--60}. 



\bibitem{CategoryofQ2}
  {V. Even and M. Gran},
  {\it Closure operators in the category of quandles},
  {Topology Appl.},
  {\bf 200}
  {(2016)},
  {237--250}. 

\bibitem{GAP}
The GAP Group, GAP -- Groups, Algorithms, and Programming, Version 4.11.1; 2021.
Available at {http://www.gap-system.org.}

\bibitem{Indecomp}
  {M. Gra\~{n}a},
  {\it Indecomposable racks of order {$p^2$}},
  {Beitr\"{a}ge Algebra Geom.},
  {\bf 45}
  {(2004)},
  {665--676}.

\bibitem{ConntransitiveQ}
  {A. Hulpke and D. Stanovsk\'{y} and P. Vojt\v{e}chovsk\'{y}},
  {\it Connected quandles and transitive groups},
  {J. Pure Appl. Algebra},
  {\bf 220}
  {(2016)},
  {735--758}. 




\bibitem{Ishihara}
  {Y. Ishihara and H. Tamaru},
  {\it Flat connected finite quandles},
  {Proc. Amer. Math. Soc.},
  {\bf 144}
  {(2016)},
  {4959--4971}. 



\bibitem{Medial}
  {P. Jedli\v{c}ka, A. Pilitowska, D. Stanovsk\'{y} and A. Zamojska-Dzienio},
  {\it The structure of medial quandles},
  {J. Algebra},
  {\bf 443}
  {(2015)},
  {300--334}. 

	\bibitem{Joyce}
  {D. Joyce},
  {\it A classifying invariant of knots, the knot quandle},
  {J. Pure Appl. Algebra},
  {\bf 23}
  {(1982)},
  {37--65}.



\bibitem{CyclicTamaru} 
{S. Kamada, H. Tamaru and K. Wada},
{\it On classification of quandles of cyclic type},
{Tokyo J. Math.},
{\bf 39}
{(2016)},
{157--171}. 



  \bibitem{Nagashiki} 
  {A. Kubo, M. Nagashiki, T. Okuda and H. Tamaru}, 
  {\it A commutativity condition for subsets in quandles--a generalization of antipodal subsets}, 
  {Differential geometry and global analysis--in honor of Tadashi Nagano}, 
  Contemp. Math., 
  {\bf 777} 
  {(2022)},
  103--125.   

\bibitem{Category}
{T. Leinster},
{\it Basic category theory},
{Cambridge University Press},
{\bf 143}
{(2014)},
{viii+183}. 

\bibitem{NelsonClassAlex}
	{S. Nelson},
	{\it Classification of finite {A}lexander quandles}, 
	{Proceedings of the {S}pring {T}opology and {D}ynamical {S}ystems {C}onference}, 
	{Topology Proc.}, 
	{\bf 27} 
	{(2003)}, 
	{245--258}.





\bibitem{Twohom} 
   {H. Tamaru},
   {\it Two-point homogeneous quandles with prime cardinality},
   {J. Math. Soc. Japan},
   {\bf 65}
   {(2013)},
   {1117--1134}. 




\bibitem{Vendramin}
   {L. Vendramin},
   {\it On the classification of quandles of low order},
   {J. Knot Theory Ramifications},
   {\bf 21}
   {(2012)},
   {1250088, 10}.




\end{thebibliography}
\end{document}